\titlespacing{\section}{0cm}{3.5pc}{1.5pc}
\def\@citex[#1]#2{\if@filesw\immediate\write\@auxout{\string\citation{#2}}\fi
  \def\@citea{}\@cite{\@for\@citeb:=#2\do
    {\@citea\def\@citea{\@citesep}\@ifundefined
       {b@\@citeb}{{\bf ?}\@warning
       {Citation `\@citeb' on page \thepage \space undefined}}%
{\csname b@\@citeb\endcsname}}}{#1}}
\def\@citesep{; }
\newtheoremstyle{Kang}{}{}{\itshape}{}{\bf}{}{.5em}{}
\theoremstyle{Kang}
\newtheorem{theorem}{Theorem}[section]
\newtheorem{lemma}[theorem]{Lemma}
\newtheoremstyle{Kremark}{}{}{}{}{\bf}{}{.5em}{}
\theoremstyle{Kremark}
\newtheorem{defn}[theorem]{Definition}
\newtheorem{other}{}
\newenvironment{idef}[1]{\begin{other}}{\end{other}}
\newenvironment{Case}[1]{\medskip {\it Case #1.}}{}
\numberwithin{equation}{section}
\def\bm#1{\mathbbm{#1}}
\def\fn#1{\mathop{{\rm #1}\vphantom{\sin}}} 
\def\bm#1{\mathbbm{#1}}
\title{Noether's problem for the groups with \\ a cyclic subgroup of index $4$}
\author{Ming-chang Kang$^{a,1}$, Ivo M. Michailov$^{b,2}$ and Jian Zhou$^{c,1,3}$ \\[3mm]
\begin{minipage}{16cm} \begin{description} \itemsep=-1pt
\item[] $^{a}$Department of Mathematics and Taida Institute of Mathematical\\ Sciences,
National Taiwan University, Taipei
\item[] $^{b}$Faculty of Mathematics and Informatics, Constantin Preslavski\\ University, Shoumen, Bulgaria
\item[] $^{c}$School of Mathematical Sciences, Peking University, Beijing
\end{description} \end{minipage}}
\date{}
\begin{document}

\maketitle

\footnote{\hspace*{-7.5mm}
2010 Mathematics Subject Classification. Primary 13A50, 14E08, 14M20, 12F12. \\
Keywords: Noether's problem, rationality problem, the inverse Galois problem.\\
E-mail addresses: kang@math.ntu.edu.tw,
ivo\_\,michailov@yahoo.com, zhjn@math.pku.edu.cn}
\footnote{\hspace*{-6mm}$^1\,$Both authors were partially
supported by National Center for Theoretic Sciences (Taipei
Office).} \footnote{\hspace*{-6mm}$^2\,$ Partially supported by
project No. RD-05-156/25.02.2011 of Shumen University.}
\footnote{\hspace*{-6mm}$^3\,$ The work of this paper was finished
when the third-named author visited National Taiwan University
under the support by National Center for Theoretic Sciences
(Taipei Office).}

\begin{abstract}
{\bf Abstract.} Let $G$ be a finite group and $k$ be a field. Let
$G$ act on the rational function field $k(x_g:g\in G)$ by
$k$-automorphisms defined by $g\cdot x_h=x_{gh}$ for any $g,h\in
G$. Noether's problem asks whether the fixed field
$k(G)=k(x_g:g\in G)^G$ is rational (i.e. purely transcendental)
over $k$. Theorem 1. If $G$ is a group of order $2^n$ ($n\ge 4$)
and of exponent $2^e$ such that (i) $e\ge n-2$ and (ii)
$\zeta_{2^{e-1}} \in k$, then $k(G)$ is $k$-rational. Theorem 2.
Let $G$ be a group of order $4n$ where $n$ is any positive integer
(it is unnecessary to assume that $n$ is a power of $2$). Assume
that {\rm (i)} $\fn{char}k \ne 2$, $\zeta_n \in k$, and {\rm (ii)}
$G$ contains an element of order $n$. Then $k(G)$ is rational over
$k$, except for the case $n=2m$ and $G \simeq C_m \rtimes C_8$
where $m$ is an odd integer and the center of $G$ is of even order
(note that $C_m$ is normal in $C_m \rtimes C_8$) ; for the
exceptional case, $k(G)$ is rational over $k$ if and only if at
least one of $-1, 2, -2$ belongs to $(k^{\times})^2$.
\end{abstract}

\bigskip

\section{Introduction}

Let $k$ be any field and $G$ be a finite group. Let $G$ act on the
rational function field $k(x_g:g\in G)$ by $k$-automorphisms such
that $g\cdot x_h=x_{gh}$ for any $g,h\in G$. Denote by $k(G)$ the
fixed field $k(x_g:g\in G)^G$. Noether's problem asks whether
$k(G)$ is rational (=purely transcendental) over $k$. It is
related to the inverse Galois problem, to the existence of generic
$G$-Galois extensions over $k$, and to the existence of versal
$G$-torsors over $k$-rational field extensions \cite[33.1,
p.86]{Sw,Sa,GMS}. Noether's problem for abelian groups was studied
extensively by Swan, Voskresenskii, Endo, Miyata and Lenstra, etc.
The reader is referred to Swan's paper for a survey of this
problem \cite{Sw}.

On the other hand, just a handful of results about Noether's
problem are obtained when the groups are not abelian. It is the
case even when the group $G$ is a $p$-group. The reader is
referred to \cite{CK,Ka6,HuK,Ka4} for previous results of
Noether's problem for $p$-groups. In the following we will list
only those results pertaining to the 2-groups.

\begin{theorem}[Chu, Hu and Kang \cite{CHK,Ka2}] \label{t1.1}
Let $k$ be any field.
Suppose that $G$ is a non-abelian group of order $8$ or $16$.
Then $k(G)$ is rational over $k$,
except when $\fn{char} k\ne 2$ and $G=Q_{16}$,
the generalized quaternion group of order $16$.
When $\fn{char}k\ne 2$ and $G=Q_{16}$,
then $k(G)$ is also rational over $k$ provided that $k(\zeta_8)$ is a cyclic extension over $k$
where $\zeta_8$ is a primitive $8$-th root of unity.
\end{theorem}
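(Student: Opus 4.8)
The plan is to go through the classification of non-abelian groups of order $8$ and $16$ and establish the rationality of $k(G)$ one group at a time. The uniform first step is that $k(G)$ is by definition the fixed field of the regular representation $k[G]$, so by the no-name lemma — if $G$ acts faithfully on a finite-dimensional $k$-vector space $W$ and $V$ is any $G$-module, then $k(W\oplus V)^G$ is rational over $k(W)^G$ — one may replace $k[G]$ by a faithful $G$-representation of small dimension without affecting rationality. For the two groups of order $8$ this already suffices in a classical way: $D_4$ has a faithful two-dimensional representation defined over the prime field, $Q_8$ acts faithfully on the four-dimensional rational quaternion algebra, and the resulting invariant fields are computed explicitly and turn out to be rational over every $k$ (the Chu--Kang part). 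Hence the substance of the theorem lies in the nine non-abelian groups of order $16$.

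The working tool for those is a chain of ``coordinate-peeling'' reductions. The basic one is: if a finite group $H$ acts on $L(x)$ fixing the subfield $L$ and $\sigma(x)=a_\sigma x+b_\sigma$ with $a_\sigma\in L^{\times}$, $b_\sigma\in L$ for every $\sigma\in H$, then $L(x)^H=L^H(y)$ for a suitable $y$; this combines the multiplicative Hilbert~90 ($H^1(H,L^{\times})=0$) with the additive Hilbert~90 / normal basis theorem ($H^1(H,L^{+})=0$), the latter valid in every characteristic, so that affine twists and, in particular, translations in characteristic $2$ never obstruct. Iterating this, together with the standard lemmas for monomial actions of abelian and metacyclic groups, one disposes of the groups carrying a cyclic normal subgroup of index $2$ — the dihedral group $D_{16}$, the semidihedral group $SD_{16}$ and the modular group $M_{16}$ — and, with extra bookkeeping, of $C_4\rtimes C_4$, $(C_4\times C_2)\rtimes C_2$, the direct products $D_4\times C_2$ and $Q_8\times C_2$, and the central product $C_4\circ D_4$. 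In each instance one passes to a faithful linear or monomial action in two or three variables, diagonalizes or triangularizes the generator of a complementary cyclic subgroup once $k$ contains the relevant root of unity, and peels off the variables one by one; when $k$ lacks that root of unity one keeps the action twisted over the at-most-quadratic extension $k(\zeta_4)/k$ and removes the twist by Galois descent, which is unobstructed because the cocycle in play is a coboundary by Hilbert~90 over $k(\zeta_4)$.

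The genuinely delicate case is $G=Q_{16}$ with $\fn{char}k\ne 2$. Write $Q_{16}=\langle\sigma,\tau:\sigma^8=1,\ \tau^2=\sigma^4,\ \tau\sigma\tau^{-1}=\sigma^{-1}\rangle$, so that $\langle\sigma\rangle\cong C_8$ is normal of index $2$. After the no-name reduction one is left with a rank-two twisted action in which diagonalizing $\sigma$ forces $\zeta_8$ into the coefficients (its eigenvalues are $\zeta_8^{\,j}$ and $\zeta_8^{-j}$), while $\tau$ interchanges the two eigenlines up to a scalar governed by $\tau^2=\sigma^4$. Adjoining $\zeta_8$, the group now acting on the function field is an extension of $Q_{16}$ by $\fn{Gal}(k(\zeta_8)/k)$; the invariant field is rational over $k(\zeta_8)$, and descending it back to $k$ amounts to splitting the resulting $2$-cocycle. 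The obstruction to that is a quaternion-type class in $\fn{Br}(k)$ — morally $(-1,-1)_k$ modulo the Hilbert-symbol relations coming from $\sqrt{2}$, $\sqrt{-2}$ — which dies exactly when $\fn{Gal}(k(\zeta_8)/k)$ is cyclic. Since $k(\zeta_8)=k(\sqrt{-1},\sqrt{2})$ and $\fn{Gal}(k(\zeta_8)/k)\hookrightarrow(\mathbb Z/8)^{\times}\simeq C_2\times C_2$, cyclicity is equivalent to $[k(\zeta_8):k]\le 2$, hence to at least one of $-1,2,-2$ lying in $(k^{\times})^2$; under that hypothesis the descent goes through and $k(Q_{16})$ is $k$-rational.

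I expect this last step — pinning down that the class one must kill is \emph{precisely} a quaternion symbol, and not a larger obstruction, and that it is split by cyclicity of $k(\zeta_8)/k$ — to be the main obstacle; the same mechanism is what makes $k(Q_{16})$ fail to be rational over fields such as $\mathbb Q$, where $k(\zeta_8)/k$ is genuinely biquadratic, and it is the prototype of the exceptional locus in Theorem~2. In characteristic $2$ no such trouble arises: $Q_{16}$ is then unconditionally rational, the $\zeta_8$-hypothesis being confined to $\fn{char}k\ne 2$, and the additive Hilbert~90 renders every translation-type twist harmless.
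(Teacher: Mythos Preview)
This theorem is not proved in the present paper at all: it is quoted from \cite{CHK,Ka2} as background and used as a black box (e.g.\ to dispose of the case $n=4$ in the reduction to~\eqref{eq1.1}). So there is no ``paper's own proof'' to compare against; what you have written is a sketch of how the cited papers proceed.

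As an outline your plan is broadly faithful to \cite{CHK,Ka2}: classify the groups, pass to a small faithful representation via Theorem~\ref{t2.2}, and peel off variables via Theorem~\ref{t2.3}. For the eight non-$Q_{16}$ groups of order $16$ this is indeed how \cite{CHK} goes, though your description (``monomial actions of abelian and metacyclic groups, \dots\ Galois descent over $k(\zeta_4)$'') glosses over substantial case-by-case computation that is the actual content of that paper.

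The genuine gap is your treatment of $Q_{16}$. You frame the positive result as a descent problem whose obstruction is ``a quaternion-type class in $\fn{Br}(k)$ --- morally $(-1,-1)_k$'', and you yourself flag this step as the main obstacle. Two problems: first, the non-rationality side (Theorem~\ref{t1.2}) is \emph{not} detected by a Brauer class --- Serre's obstruction is a degree-$3$ cohomological invariant --- so the heuristic that rationality is governed by killing a quaternion symbol is misleading and would not, even if made precise, give the ``exactly when'' you claim. Second, and more to the point of the statement at hand, the sufficiency proof in \cite{Ka2} does not run through $\fn{Br}(k)$ or abstract descent at all: it is a direct computation. One builds an explicit faithful monomial $Q_{16}$-action on $k(x_1,\dots,x_4)$, uses the hypothesis that $k(\zeta_8)/k$ is cyclic to rewrite the coefficients so that the action becomes tractable (this is where ``at least one of $-1,2,-2$ is a square'' enters concretely, case by case), and then exhibits generators of the fixed field. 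Your proposal does not supply this computation, and the cohomological language you substitute for it does not obviously convert into one. If you want a self-contained argument, you should replace the $Q_{16}$ paragraph with the explicit invariant-theoretic calculation from \cite{Ka2}; compare also the parallel analysis of the $C_4$-monomial action $z_1\mapsto z_2\mapsto z_3\mapsto -1/(z_1z_2z_3)$ in Theorem~\ref{t2.8} of the present paper, which exhibits exactly the same ``$-1,2,-2$'' trichotomy.
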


\begin{theorem}[Serre {\cite[Theorem 34.7]{GMS}}] \label{t1.2}
If $G=Q_{16}$, then $\bm{Q}(G)$ is not stably rational over $\bm{Q}$;
in particular, it is not rational over $\bm{Q}$.
\end{theorem}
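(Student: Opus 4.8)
The plan is to exhibit a stably‑birational invariant separating $\bm{Q}(Q_{16})$ from any rational field, and --- following Serre --- to build it from the Hasse--Witt (Stiefel--Whitney) classes of trace forms of $Q_{16}$-Galois algebras. One should note at the outset why a purely ``geometric'' obstruction cannot work: $Q_{16}$ being a $2$-group of order $2^4$, Bogomolov's theorem gives $\mathrm{Br}_{\mathrm{nr}}(\bar{\bm{Q}}(Q_{16}))=0$, and in fact $\bar{\bm{Q}}(Q_{16})$ is already rational by Theorem 1.1 (the hypothesis ``$k(\zeta_8)/k$ cyclic'' being vacuous over an algebraically closed field). So the obstruction must be genuinely arithmetic: it has to detect the prime $2$ of $\bm{Q}$ --- equivalently, the non-cyclicity of $\bm{Q}(\zeta_8)/\bm{Q}$ --- and it must evaporate over every field in which one of $-1,2,-2$ becomes a square. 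That profile calls for a degree-$2$ invariant with values in $H^2(-,\bm{Z}/2)={}_2\mathrm{Br}(-)$.

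Concretely, to a $Q_{16}$-Galois algebra $L/K$ attach the trace form $q_L=\langle\mathrm{tr}_{L/K}(x^2)\rangle$, a $16$-dimensional quadratic form over $K$, and set $e(L):=w_2(q_L)-w_2(q_{L_0})\in H^2(K,\bm{Z}/2)$ with $L_0=K^{Q_{16}}$ the split algebra; one checks that $L\mapsto e(L)$ is a normalized cohomological invariant $H^1(-,Q_{16})\to H^2(-,\bm{Z}/2)$. The input from rationality theory is Serre's principle that stable rationality forces such an invariant to be essentially constant: if $\bm{Q}(Q_{16})$ were stably rational over $\bm{Q}$, then the versal $Q_{16}$-torsor $\xi_{\mathrm{gen}}$ would live over a rational $\bm{Q}$-field and the value $e(\xi_{\mathrm{gen}})$ would necessarily lie in the image of $H^2(\bm{Q},\bm{Z}/2)$, i.e.\ $e$ would be constant. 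It therefore suffices to prove that $e$ is \emph{not} constant for $Q_{16}$ over $\bm{Q}$.

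This is the heart of the matter and is an explicit computation. Writing $Q_{16}=\langle a,b\mid a^8=1,\ b^2=a^4,\ bab^{-1}=a^{-1}\rangle$, one produces a sufficiently generic $Q_{16}$-extension over an explicit rational parameter field --- most efficiently by solving the central embedding problem $Q_{16}\twoheadrightarrow Q_{16}/\langle a^4\rangle\cong D_4$ (kernel $\bm{Z}/2$) for a versal $D_4$-extension --- then computes $q_L$ and extracts $e(\xi_{\mathrm{gen}})$ through Serre's formula for the Hasse--Witt invariant of a trace form in terms of the Galois class. Serre's computation produces a class whose residue along a suitable divisor of the parameter space is \emph{nonzero}, equal to the class of $2$ (equivalently, of one of $-1,2,-2$) in $\bm{Q}^{\times}/(\bm{Q}^{\times})^2=H^1(\bm{Q},\bm{Z}/2)$. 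As that class is nontrivial in $\bm{Q}$, the invariant $e$ is non-constant, contradicting the principle of the previous paragraph; hence $\bm{Q}(Q_{16})$ is not stably rational, a fortiori not rational. The computation also matches Theorem 1.1: over a field $k$ in which one of $-1,2,-2$ is a square --- equivalently in which $k(\zeta_8)/k$ is cyclic --- the offending residue dies, the obstruction disappears, and rationality returns.

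The genuinely hard step is precisely this final computation: establishing Serre's trace-form formula for $Q_{16}$ and checking that the residue is the nontrivial square-class of $2$ over $\bm{Q}$. This requires working with the mod-$2$ cohomology $H^\ast(Q_{16},\bm{Z}/2)$ and with the quaternionic type of the faithful characters of $Q_{16}$, and it is exactly here that the arithmetic of $2$ in $\bm{Q}$ --- the non-cyclic extension $\bm{Q}(\zeta_8)/\bm{Q}$ --- enters in an essential way; there is no shortcut around it. A second, more technical point is to make the principle used in the second paragraph fully precise and to verify its hypotheses for $Q_{16}$, so that $e$ becomes a bona fide stably-birational obstruction.
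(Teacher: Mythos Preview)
The paper does not supply a proof of this theorem at all: Theorem~\ref{t1.2} is stated with attribution to Serre \cite[Theorem 34.7]{GMS} and used only as background, so there is no ``paper's own proof'' to compare your proposal against.

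That said, your sketch is a faithful outline of Serre's original argument in \cite{GMS}. The logic is exactly as you describe: one shows that stable (indeed retract) rationality of $\bm{Q}(G)$ forces every normalized cohomological invariant of $G$-torsors over $\bm{Q}$ to be constant, and then exhibits for $G=Q_{16}$ a non-constant degree-$2$ invariant with values in $H^2(-,\bm{Z}/2)$ built from the Hasse--Witt invariant of the trace form. Your diagnosis of why the obstruction must be arithmetic rather than geometric (vanishing unramified Brauer group, rationality over fields with $\zeta_8$) and of the role of the non-cyclicity of $\bm{Q}(\zeta_8)/\bm{Q}$ is also on target, and matches the way the paper later uses the condition ``at least one of $-1,2,-2$ is a square'' in Theorem~\ref{t2.8} and Theorem~\ref{t1.8}.

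Two small caveats. First, what you call a ``proof proposal'' is really a proof \emph{outline}: the substantive work---Serre's formula expressing $w_2$ of the trace form via the Galois cocycle, the identification of the relevant class in $H^2(Q_{16},\bm{Z}/2)$, and the residue computation---is pointed to rather than carried out, so as written this would not stand alone without the reference to \cite{GMS}. Second, the precise statement you want in the middle paragraph is that retract rationality of $k(G)$ implies every normalized invariant in $\op{Inv}_k(G,\bm{Z}/2)$ is constant; stable rationality implies retract rationality, so your conclusion follows, but the natural hypothesis for the splitting principle is retract rationality (this is how \cite{GMS} frames it, and it is also how the present paper uses retract rationality in the proof of Theorem~\ref{t2.8}).
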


We don't know the answer whether $k(G)$ is rational over $k$ or
not, if $G=Q_{16}$ and $k$ is any field other than $\bm{Q}$ such
that $k(\zeta_8)$ is not a cyclic extension of $k$. The reader is
referred to \cite{CHKP,CHKK} for groups of order $32$ and $64$.

Among the known results of Noether's problem for non-abelian
$p$-groups, except the situations in Theorem \ref{t1.1},
assumptions on the existence of ``enough" roots of unity always
arose (see, for example, the following Theorem \ref{t1.3}). In
fact, even when $G$ is a non-abelian $p$-group of order $p^3$
where $p$ is an odd prime number, it is not known how to find a
necessary and sufficient condition such as $\bm{Q}(G)$ is rational
over $\bm{Q}$ (see \cite{Ka5}). Similarly, without assuming the
existence of roots of unity, we don't have a good criterion to
guarantee $\bm{Q}(G)$ is rational where $G$ is a non-abelian group
of order $32$ (compare with the results in \cite{CHKP}). Thus it
will be desirable if we can weaken the assumptions on the
existence of roots of unity.

Now we turn to metacyclic $p$-groups. For results of a general
metacyclic $p$-group, see \cite{Ka6}. The following theorem
provides a sharper result for metacyclic $p$-groups with a large
cyclic subgroup. We state the result for the $2$-groups only.

\begin{theorem}[Hu and Kang \cite{HuK,Ka4}] \label{t1.3}
Let $n\ge 4$ and $G$ be a non-abelian group of order $2^n$.
Assume that either {\rm (i)} $\fn{char}k=2$,
or {\rm (ii)} $\fn{char}k\ne 2$ and $k$ contains a primitive $2^{n-2}$-th root of unity.
If $G$ contains an element whose order $\ge 2^{n-2}$,
then $k(G)$ is rational over $k$.
\end{theorem}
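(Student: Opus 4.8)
The plan is to reduce Noether's problem for $G$ to the rationality of the fixed field of an explicit monomial action on at most four variables, and then to carry out that rationality computation; the only delicate point is the subfamily in which $G$ has an element of order exactly $2^{n-1}$ while $k$ is only assumed to contain $\zeta_{2^{n-2}}$. First I would dispose of hypothesis (i): when $\fn{char}k=2$ the group $G$ is a $p$-group in its defining characteristic, and $k(G)$ is rational over $k$ by the classical theorem of Kuniyoshi, with no condition on the structure of $G$. So from now on assume $\fn{char}k\neq 2$ and $\zeta_{2^{n-2}}\in k$, whence $k[G]$ is semisimple by Maschke's theorem.

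Next I would set up the reduction. Let $a\in G$ have order $\ge 2^{n-2}$, put $A=\langle a\rangle$, and let $d=[G:A]\in\{2,4\}$ (the value $d=1$ is impossible since $G$ is non-abelian, and $d=2$ forces $A\trianglelefteq G$). It suffices to exhibit one faithful $k$-representation $V$ of $G$ with $k(V)^G$ rational over $k$: if $V$ is a direct summand of the regular representation $V_{\mathrm{reg}}$, then writing $V_{\mathrm{reg}}=V\oplus W$ and applying the no-name lemma shows that $k(G)=k(V_{\mathrm{reg}})^G$ is rational over $k(V)^G$, so rationality of $k(V)^G$ yields that of $k(G)$. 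I would take $V=\fn{Ind}_A^G\chi$ for a linear character $\chi$ of $A$; by Frobenius reciprocity each irreducible occurs in $\fn{Ind}_A^G\chi$ with multiplicity at most its dimension, so $V$ is indeed a summand of $V_{\mathrm{reg}}$. On $k(V)=k(x_1,\dots,x_d)$ the group $G$ then acts by monomial $k$-automorphisms: it permutes the $d$ coordinate lines indexed by $G/A$ and scales them by values of $\chi$.

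Now I would treat the favourable case $\fn{ord}(a)=2^{n-2}$, so that $\exp G=2^{n-2}$ and $\zeta_{\exp G}\in k$. Here every representation of an abelian subgroup of $G$ is diagonalizable over $k$, and one checks that $G$ has a faithful monomial representation $V$ over $k$ in which a maximal normal abelian subgroup $N$ acts diagonally by roots of unity lying in $k$, while $G/N$ acts by permutation-with-scaling. Its fixed field is rational: diagonalizing $N$ and applying Fischer's theorem makes $k(V)^N$ rational, with $G/N$ acting again monomially, after which one descends through a composition series of $G/N$ one $C_2$-step at a time, each step being a cyclic monomial action whose invariants are rational because the relevant root of unity is present. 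This settles every group whose largest cyclic subgroup has index $4$.

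The main obstacle is the case $\fn{ord}(a)=2^{n-1}$, where $A$ has index $2$ and $G$ is one of the four metacyclic families $M_{2^n},D_{2^n},SD_{2^n},Q_{2^n}$: now $k$ contains only $\zeta_{2^{n-2}}=\zeta_{\fn{ord}a}^{\,2}$ and not $\zeta_{\fn{ord}a}$ itself, so $a$ cannot be diagonalized over $k$ and the naive induced-character recipe produces scalars outside $k$. My plan here is instead to realize $G$ on $k(x_1,x_2)$ by a faithful two-dimensional monomial action in which $a$ acts through an order-$2^{n-1}$ companion-type matrix whose square is the available scalar $\zeta_{2^{n-2}}$, while $b$ twists and interchanges the two coordinates according to the defining relation $bab^{-1}=a^{\pm 1}$ or $a^{\pm 1+2^{n-2}}$. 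Computing $k(x_1,x_2)^{\langle a\rangle}$ first, one is reduced to a rank-one or quaternionic descent over $k(\zeta_{2^{n-2}})=k$, which is exactly the mechanism behind Theorem~\ref{t1.1}: the obstruction to rationality is precisely the failure of $k(\zeta_{2^{n-1}})/k$ to be cyclic, and the hypothesis $\zeta_{2^{n-2}}\in k$ forces $[k(\zeta_{2^{n-1}}):k]\le 2$, so that extension is cyclic and the descent goes through. I expect converting the absence of $\zeta_{2^{n-1}}$ into this manageable at-most-quadratic descent, and verifying rationality uniformly in $n$, to be the heart of the argument; the base case $n=4$ is furnished directly by Theorem~\ref{t1.1}, where $\zeta_4\in k$ already makes $k(\zeta_8)/k$ cyclic and forces even $Q_{16}$ to be rational.
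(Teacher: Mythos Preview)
This theorem is quoted in the paper as a prior result of Hu--Kang and Kang (\cite{HuK,Ka4}); the paper supplies no proof of it, so there is no in-paper argument to compare against directly. Judged on its own, however, your proposal has a concrete error in the index-$2$ case and is too vague to constitute a proof in the index-$4$ case.

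\medskip
\textbf{Index $2$.} Your claimed faithful two-dimensional linear representation of $G$ over $k$ does not exist for $D_{2^n}$, $SD_{2^n}$, $Q_{2^n}$ once $n\ge 5$. If $a$ acts by a companion-type matrix with $a^2=\zeta_{2^{n-2}}\cdot\mathrm{id}$, then the eigenvalues of $a$ are $\xi$ and $-\xi$ with $\xi=\zeta_{2^{n-1}}$. Extending to $G$ requires $b$ to permute these eigenspaces compatibly with $bab^{-1}=a^{r}$, i.e.\ $\xi^{r}\in\{\xi,-\xi\}$. For $r=-1$ (dihedral, quaternion) this forces $\xi^{2}\in\{1,-1\}$, hence $n\le 4$; for $r=-1+2^{n-2}$ (semidihedral) the same constraint appears. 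Only for $M_{2^n}$ (where $r=1+2^{n-2}$, so $\xi^{r}=-\xi$) does your construction go through. Equivalently, the faithful irreducible $2$-dimensional characters of $D_{2^n}$, $SD_{2^n}$, $Q_{2^n}$ take the value $\xi+\xi^{-1}$ at $a$, and this lies outside $k$ when $\zeta_{2^{n-1}}\notin k$ (the nontrivial element of $\op{Gal}(k(\xi)/k)$ sends $\xi\mapsto -\xi$, not $\xi\mapsto\xi^{-1}$). Hence over $k$ the smallest faithful linear representation is $4$-dimensional, and your ``at-most-quadratic descent on two variables'' never gets started. The route that actually works is the one the present paper uses in Theorem~\ref{t4.1}: induce a faithful character of the index-$4$ subgroup $\langle a^{2}\rangle$ to $G$, land in $k(x_0,x_1,x_2,x_3)$, pass to the ratios $y_i=x_i/x_0$, and then invoke the three-variable monomial rationality results (Theorem~\ref{t2.7}); note $\sqrt{-1}\in k$ since $\zeta_{2^{n-2}}\in k$ and $n\ge 4$.

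\medskip
\textbf{Index $4$.} ``Diagonalize a maximal abelian normal $N$, apply Fischer, then descend one $C_2$-step at a time'' is not a proof. After taking $N$-invariants you are left with $G/N$ acting by monomial $k$-automorphisms on a rational function field of transcendence degree $\ge 3$, and such fixed fields are \emph{not} automatically rational even with all relevant roots of unity present; this is precisely the content of the nine exceptional families in \cite{HKY,Ya} and of the exceptional case in Theorem~\ref{t2.8}. The original arguments in \cite{HuK,Ka4} proceed case-by-case through Ninomiya's classification (Theorem~\ref{t2.1}), much as Section~3 of the present paper does under the weaker hypothesis $\zeta_{2^{n-3}}\in k$. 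Alternatively, since in this case $\zeta_{\exp G}=\zeta_{2^{n-2}}\in k$ and $\sqrt{-1}\in k$, one can again appeal to Theorem~\ref{t4.1} with the cyclic subgroup $\langle a\rangle$ of index $4$; but that is a different argument from the one you sketched, and it must actually be carried out.
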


The purpose of this paper is to generalize the above Theorem
\ref{t1.3} in two directions. The first main result is the
following theorem.

\begin{theorem} \label{t1.4}
Let $n\ge 4$ and $G$ be a group of order $2^n$ and of exponent $2^e$ where $e\ge n-2$.
Assume that either {\rm (i)} $\fn{char}k=2$,
or {\rm (ii)} $\fn{char}k \ne 2$ and $k$ contains a primitive $2^{e-1}$-th root of unity.
Then $k(G)$ is rational over $k$.
\end{theorem}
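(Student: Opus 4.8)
The plan is to reduce Noether's problem for $G$ to a chain of purely transcendental extensions by exploiting the large cyclic (or nearly cyclic) subgroup guaranteed by the hypothesis $e \ge n-2$. Write $|G| = 2^n$ and let $x \in G$ have order $2^e$. Since $e \ge n-2$, the cyclic subgroup $\langle x \rangle$ has index at most $4$ in $G$. The first step is a structural classification: up to isomorphism, list all $2$-groups of order $2^n$ with exponent $2^e \ge 2^{n-2}$. For $e = n$ the group is cyclic; for $e = n-1$ the group has a cyclic subgroup of index $2$, and these are completely classified (cyclic $\times C_2$, the dihedral, quaternion, semidihedral, and modular groups, together with $C_{2^{n-1}} \rtimes C_2$ variants); for $e = n-2$ one gets a cyclic subgroup of index $4$, and the relevant groups must be enumerated explicitly. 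For the cases with a cyclic subgroup of index at most $2$, Theorem~\ref{t1.3} already applies (an element of order $\ge 2^{n-2}$ exists), so the genuinely new work is the index-$4$ situation.

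Next I would set up the standard linearization. By the no-name lemma (Endo--Miyata / Hajja--Kang), since $G$ acts faithfully by permuting a $k$-basis of the regular representation, $k(G)$ is rational over $k(V)^G$ where $V$ is any faithful $k[G]$-module, provided the difference of the two representations is "stably" trivial in a way that makes the quotient rational. Concretely, one replaces the regular representation by a small faithful representation plus a monomial action. The key reduction, used repeatedly in \cite{HuK,Ka4}, is: if $G$ has a normal subgroup $N$ with $G/N$ acting on $k(N)$, and $k(N)$ is rational over $k$ with the action of $G/N$ being (after a change of variables) monomial or even linear, then one can descend. So I would pick $N = \langle x \rangle \cong C_{2^e}$, note $k(C_{2^e})$ is rational over $k$ because $\zeta_{2^{e-1}} \in k$ (this is exactly the Lenstra-type condition that makes cyclic Noether's problem affirmative — a primitive $2^{e-1}$-th root of unity suffices for $C_{2^e}$ when $\op{char} k \ne 2$, and char $2$ is handled separately via additive Artin--Schreier-type arguments), and then analyze the induced action of the quotient $G/\langle x\rangle$, which has order $1$, $2$, or $4$.

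The heart of the argument is therefore: after diagonalizing the $C_{2^e}$-action (possible since $\zeta_{2^{e-1}} \in k$, which gives enough eigenvalues — note that an element of order $2^e$ acting on a one-dimensional space needs $\zeta_{2^e}$, but the trick, as in \cite{HuK}, is to work with $2^{e-1}$-th roots and handle the remaining "square root" layer by an explicit rational parametrization of a conic or a quadratic extension), one obtains a multiplicative (monomial) action of $G/\langle x \rangle$ on a Laurent polynomial ring over $k$, and one must show this monomial action of a group of order $\le 4$ has rational fixed field. For groups of order $2$ this is classical (a monomial action of $C_2$ is always rational over the base field). For groups of order $4$ — $C_4$ or $C_2 \times C_2$ acting monomially — one invokes the rationality results for monomial actions of small groups (e.g. Hajja, Kang, and the flabby-class-group machinery: all $C_4$ and $C_2^2$ monomial actions of the relevant shape on fields containing enough roots of unity give rational fixed fields). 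Assembling the tower $k \subset k(C_{2^e})^{(\text{layer})} \subset \cdots \subset k(G)$ then yields $k$-rationality.

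The main obstacle I expect is the bookkeeping in the index-$4$ case: unlike the index-$2$ case of Theorem~\ref{t1.3}, where the classification is short and the quotient is just $C_2$, here the quotient can be $C_4$ or $C_2 \times C_2$, the commutator action of the quotient on $\langle x \rangle$ can be by various automorphisms of $C_{2^e}$ (inversion, $1 + 2^{e-1}$, $1 + 2^{e-2}$, $-1 + 2^{e-1}$, etc.), and the extension may be non-split; each combination produces a slightly different monomial action whose rationality must be checked, and one has to verify that the hypothesis $\zeta_{2^{e-1}} \in k$ (rather than the stronger $\zeta_{2^e} \in k$) is genuinely sufficient in every branch. In particular, the "last square-root layer" — passing from a $C_{2^{e-1}}$-eigenbasis to a full $C_{2^e}$-action, or equivalently from $\zeta_{2^{e-1}}$ to $\zeta_{2^e}$ — is where the argument is most delicate, and it is precisely this layer that forces the exponent condition $e \ge n-2$ (so that the part of $G$ acting nontrivially on this layer is small enough to control). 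I would isolate this as the key lemma and prove it first, then feed it into the case analysis.
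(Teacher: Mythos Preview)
Your outline has the right general shape (reduce to $e=n-2$, use the classification of $2$-groups with a cyclic subgroup of index $4$, build a small faithful representation, analyze a monomial action), but there is a structural gap that would block the argument as written. You take $N=\langle x\rangle$ with $x$ of order $2^{e}=2^{n-2}$ and propose to study the quotient $G/N$ of order $\le 4$. However, in many of the groups on Ninomiya's list the maximal cyclic subgroup $\langle\sigma\rangle$ is \emph{not} normal: for instance in $G_5$ one has $\lambda^{-1}\sigma\lambda=\sigma\tau\notin\langle\sigma\rangle$, and similarly for $G_{13},G_{14},G_{17},G_{18},G_{22},G_{23},G_{26}$. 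So the ``quotient $G/\langle x\rangle$ of order $\le 4$ acting monomially'' simply does not exist for these groups, and the descent you describe cannot be set up. A related issue: even for the metacyclic groups on the list where $\langle\sigma\rangle$ \emph{is} normal, the paper explicitly notes (just after \eqref{eq1.1}) that Theorems~\ref{t1.5} and~\ref{t1.6} --- which are exactly the ``normal cyclic $N$, nice quotient'' machinery you are invoking --- fail to apply because the hypothesis supplies only $\zeta_{2^{e-1}}$, not $\zeta_{2^e}$. Your ``last square-root layer'' lemma would have to do real work here, and you have not indicated how to prove it.

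What the paper actually does is different and more hands-on. It never tries to diagonalize $\sigma$; instead it works with the abelian subgroup $\langle\sigma^2\rangle$ (or $\langle\sigma^2,\tau\rangle$, $\langle\sigma^2,\tau^2\rangle$, etc.), which \emph{can} be diagonalized since $\fn{ord}(\sigma^2)=2^{n-3}$ and $\zeta_{2^{n-3}}\in k$. From an eigenvector $X$ for $\sigma^2$ it builds an explicit $4$-- or $8$--dimensional induced representation, checks faithfulness, applies Theorem~\ref{t2.2} to strip off the regular representation, and then reduces via Theorem~\ref{t2.3} to a $3$-variable monomial action. The resulting monomial actions are handled case by case, typically landing on Theorem~\ref{t2.4} (a specific $\langle\sigma,\tau\rangle$-action on $k(x,y,z)$) or Theorem~\ref{t2.5} (Hajja's $2$-variable result). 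One more caution for your plan: it is \emph{not} true that every $3$-dimensional monomial action of $C_4$ has rational fixed field --- Theorem~\ref{t2.8} records the exceptional action $\tau:z_1\mapsto z_2\mapsto z_3\mapsto -1/(z_1z_2z_3)$, whose fixed field over $\bm{Q}$ is irrational --- so the blanket appeal to ``rationality results for monomial actions of small groups'' would also need refinement.
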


For the second generalization of Theorem \ref{t1.3}, recall a
theorem of Michailov \cite{Mi}. Imitating the definition of the
$2$-groups modular groups, dihedral groups, quasi-dihedral groups,
generalized quaternion groups (see \cite[Theorem 1.9]{HuK}), the
groups $M_{8n}, D_{8n}$, $SD_{8n}, Q_{8n}$ are defined in
\cite[Section 1]{Mi} where $n$ is any positive integer and it is
unnecessary to assume that $n$ is a power of $2$. Denote by $H$
anyone of these groups. Note that $H$ is a group of order $8n$
containing an element of order $4n$. The following theorem
generalizes Theorem \ref{t1.3} in another direction.

\begin{theorem}[Michailov {\cite[Theorem 1.2]{Mi}}] \label{t1.7}
Let $H$ be a group of order $8n$ defined above, and $G$ be a
central extension of $H$ defined by $1 \rightarrow C_2 \rightarrow
G \rightarrow H \rightarrow 1$ where $C_2$ is the cyclic group of
order two. If k is a field containing a primitive $4n$-th root of
unity. Then $k(G)$ is rational over $k$.
\end{theorem}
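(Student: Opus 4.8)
The plan is to exploit the abelian normal subgroup of index two sitting inside $G$ and to reduce Noether's problem to the rationality of the fixed field of an explicit monomial action, the last step being the invariants of a single involution. Write $G=\langle\sigma,\tau,z\rangle$, where $z$ generates the central $C_2=\mathrm{Ker}(G\to H)$, $\sigma$ is a lift of the distinguished element of order $4n$ of $H$, and $\tau$ is a lift of the ``reflection'', so that $\tau\sigma\tau^{-1}=\sigma^{r}z^{\varepsilon}$ with $(r,\varepsilon)$ determined by which of the four families $M_{8n},D_{8n},SD_{8n},Q_{8n}$ the group $H$ belongs to (and $\tau^{2}=\sigma^{2n}$ in the quaternion family). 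Then $A:=\langle\sigma,z\rangle$ is an abelian normal subgroup of order $8n$ and index $2$; according to whether $\sigma^{4n}=z$ or $\sigma^{4n}=1$ we have $A\cong C_{8n}$ (so $G$ has a cyclic subgroup of index $2$) or $A\cong C_{4n}\times C_2$ (so $G$ has a cyclic subgroup of index $4$). By the no-name lemma it suffices to prove that $k(V)^{G}$ is $k$-rational for a faithful $k$-representation $V$ realized inside the regular representation, and I would take for $V$ the representation induced from the characters of $A$, on which $A$ acts through its characters and $\tau$ permutes-and-scales the coordinates according to $\tau\sigma\tau^{-1}=\sigma^{r}z^{\varepsilon}$.

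The first substantial step is to diagonalize the $A$-action using only the available roots of unity. Since $\zeta_{4n}\in k$ (which also forces $\fn{char}k\ne 2$), every character of $A$ of order dividing $4n$ is defined over $k$; in the case $A\cong C_{4n}\times C_2$ this diagonalizes $A$ completely and $V$ becomes a genuine monomial representation over $k$. In the case $A\cong C_{8n}$ the characters of $\sigma$ of order $8n$ are not individually $k$-rational --- they occur in conjugate pairs under $\mathrm{Gal}(k(\zeta_{8n})/k(\zeta_{4n}))$ --- so here I would first split off $\sigma^{2}$, which has order $4n$ and is diagonalizable over $k$, and then treat $\sigma$ as a square root acting through an explicit $2\times2$ monomial block interchanging each conjugate pair of coordinates. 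This descent, trading $\zeta_{8n}\notin k$ for $\zeta_{4n}\in k$ at the cost of two-dimensional twisted blocks, is the same mechanism that underlies Theorem \ref{t1.3}, where a cyclic subgroup of order $2^{n-2}$ is handled with only a primitive $2^{n-2}$-th root of unity; I expect it to be where the precise hypothesis ``$\zeta_{4n}\in k$'' is used sharply.

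Once $A$ acts diagonally, I would compute the fixed field in two stages, using $k(V)^{G}=\big(k(V)^{A}\big)^{\langle\tau\rangle}$. Taking $A$-invariants first: the fixed field of a diagonal finite abelian action on a rational function field is $k$-rational and, crucially, is generated by explicit monomials on which the residual involution $\tau$ again acts monomially. The problem is thereby reduced to showing that the fixed field of the single involution $\tau$, acting by an explicit monomial automorphism of a rational function field over $k$, is $k$-rational. For this I would invoke the rationality lemmas for monomial involutions developed in \cite{HuK} (of Hajja type), running through the four families according to the value of $(r,\varepsilon)$; the generalized-quaternion family, where in addition $\tau^{2}=\sigma^{2n}\ne1$, needs the version of the lemma allowing a nontrivial twist, which is absorbed by a suitable choice of the monomial coordinates.

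The main obstacle is the case $A\cong C_{8n}$: producing, with only $\zeta_{4n}$ in hand, a set of monomial coordinates in which both $\sigma$ (through its $2\times2$ blocks) and the involution $\tau$ act monomially and for which the involution lemma applies, and then checking uniformly across the four families that the quaternionic twist $\tau^{2}=\sigma^{2n}$ creates no obstruction. By contrast the no-name reduction, the complete diagonalization in the $C_{4n}\times C_2$ case, and the final appeal to the monomial $C_2$-rationality lemma are routine; the linear-algebra bookkeeping of the conjugate-pair blocks is where the genuine work lies.
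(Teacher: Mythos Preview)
The paper does not give a self-contained proof of Theorem~\ref{t1.7}; it is quoted from \cite{Mi}, and the paper instead proves the broader Theorem~\ref{t1.8}, which absorbs Theorem~\ref{t1.7} as the special case $|G|=4N$ with $N=4n$ (so $4\mid N$, Case~1 of the proof of Theorem~\ref{t1.8}). The paper's argument for that case is Theorem~\ref{t4.1}: one chooses an element $\sigma$ of order $N=4n$ (not the whole abelian subgroup of index~$2$), takes a coset decomposition with respect to the cyclic subgroup $\langle\sigma\rangle$ of index~$4$, and builds a $4$-dimensional induced subspace on which $G$ acts faithfully. After the standard reduction via Theorems~\ref{t2.2} and \ref{t2.3} one is left with a $3$-dimensional monomial action with coefficients in $\langle\zeta_{4n}\rangle$, and since $4\mid 4n$ forces $\sqrt{-1}\in k$, Theorem~\ref{t2.7} (resting on the Yamasaki and Hoshi--Kitayama--Yamasaki classification) finishes the job uniformly, with no case split over the four families $M_{8n},D_{8n},SD_{8n},Q_{8n}$ and no separate treatment of the $C_{8n}$ versus $C_{4n}\times C_2$ dichotomy.

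Your route is genuinely different: you work with the abelian normal subgroup $A$ of index~$2$, diagonalize it (or block-diagonalize when $A\cong C_{8n}$ and $\zeta_{8n}\notin k$), and then reduce to the fixed field of a single monomial involution $\tau$. This is much closer in spirit to Michailov's original proof in \cite{Mi} and to the hand computations of \cite{HuK}, and it has the merit of being elementary --- no appeal to the full $3$-dimensional monomial classification --- at the price of a family-by-family analysis and of the delicate descent you flag in the $A\cong C_{8n}$ case. The paper's approach trades that bookkeeping for the heavier black box of Theorem~\ref{t2.7}, gaining uniformity and the extension to arbitrary groups of order $4N$ with a cyclic subgroup of index~$4$. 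Your plan is sound for the statement as written, but if you want to match the paper you should instead induce from the index-$4$ cyclic subgroup and invoke Theorem~\ref{t2.7}; the two-stage $(k(V)^{A})^{\langle\tau\rangle}$ computation and the $2\times2$ conjugate-pair blocks then become unnecessary.
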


The second main result of this paper is the following theorem
which generalizes Theorem \ref{t1.3} and Theorem \ref{t1.7}.

\begin{theorem} \label{t1.8}
Let $G$ be a group of order $4n$ where $n$ is any positive integer
{\rm(}it is unnecessary to assume that $n$ is a power of $2
\,${\rm)}. Assume that {\rm (i)} $\fn{char}k \ne 2$ and $k$
contains a primitive $n$-th root of unity, and {\rm (ii)} $G$
contains an element of order $n$. Then $k(G)$ is rational over
$k$, except for the case $n=2m$ and $G \simeq C_m \rtimes C_8$
where $m$ is an odd integer and the center of $G$ is of even order
{\rm(}note that $C_m$ is normal in $C_m \rtimes C_8${\rm)}; for
the exceptional case, $k(G)$ is rational over $k$ if and only if
at least one of $-1, 2, -2$ belongs to $(k^{\times})^2$.
\end{theorem}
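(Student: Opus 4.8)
The plan is to reduce Noether's problem for $G$ of order $4n$ to a concrete rationality problem about a multiplicative group action, exploiting that $G$ contains a cyclic subgroup $N=\langle\sigma\rangle$ of order $n$ (hence of index $4$), and then to treat the finitely many ``shapes'' of $G$ case by case.

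\smallskip\noindent\textbf{Step 1: From $k(G)$ to a twisted monomial field.}
First I would invoke the standard reduction (going back to work of Kang, Hu, et al.) that since $N$ is abelian and $\zeta_n\in k$, the induced action of $G$ on $k(x_g:g\in G)$ can be ``diagonalized'' along $N$: one finds $G$-stable subfield $k(y_1,\dots,y_n)$ on which $N$ acts by multiplicative characters, and $k(G)=k(y_1,\dots,y_n)^G$ up to adjoining a rational (purely transcendental) factor. Concretely, writing $Q=G/N$ (a group of order $4$, so $Q\simeq C_4$ or $C_2\times C_2$), the field $k(G)$ is shown to be rational over a field of the form $L(Q)$-type expression where $L=k(t_1,\dots,t_n)$ carries a $G$-action, the $N$-part being multiplication by $n$-th roots of unity and the $Q$-part permuting/twisting the $t_i$. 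The key technical input here is Theorem~\ref{t1.3} and the methods behind it (the ``going up'' lemmas for $C_2$- and $C_4$-extensions and the linearization of faithful monomial actions), which let me collapse the $N$-direction completely and reduce to understanding how a group of order $4$ acts.

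\smallskip\noindent\textbf{Step 2: Classify the extensions $1\to N\to G\to Q\to 1$.}
Next I would enumerate the relevant $G$. Since $G$ has an element of order $n=|N|$ and $[G:N]=4$, the possibilities for the conjugation action $Q\to\op{Aut}(N)$ and for the extension class are limited. When $n$ is odd the situation is cleanest (the extension splits after replacing $N$ by a complement's image, by Schur–Zassenhaus-type reasoning); when $n$ is even one must track $2$-parts carefully, and this is exactly where the groups $M_{8n},D_{8n},SD_{8n},Q_{8n}$ of \cite{Mi} and their central $C_2$-extensions (Theorem~\ref{t1.7}) reappear. I would organize by the isomorphism type of a Sylow $2$-subgroup and by whether $Q$ acts trivially, by an involution, or (only possible when $4\mid$ the relevant order) by an order-$4$ automorphism on the $2$-part of $N$. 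In each branch the twisted monomial action from Step 1 becomes explicit.

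\smallskip\noindent\textbf{Step 3: Prove rationality case by case, and isolate the exception.}
For each resulting explicit action I would apply the toolkit: (a) for actions that are (stably) linearizable, use that a faithful linear action of a finite group over a field with enough roots of unity on a space it acts on is rational (the classical fact underlying Theorems~\ref{t1.1},\ref{t1.3}); (b) for the genuinely monomial ones, use the $C_2$- and $C_4$-descent lemmas to peel off generators one at a time, reducing the number of variables. The only place this machinery should fail to give rationality unconditionally is the family $G\simeq C_m\rtimes C_8$ with $n=2m$, $m$ odd, $C_8$ acting faithfully-enough that $Z(G)$ has even order: there the $C_8$-action produces, after all reductions, a field of the form $k(\zeta_8)$-related norm-one problem, i.e.\ rationality becomes equivalent to $k(\zeta_8)/k$ being cyclic, which for the relevant subfield translates into the condition that one of $-1,2,-2$ is a square in $k$ (the same dichotomy as in the $Q_{16}$ case of Theorem~\ref{t1.1}). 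I would prove the ``if'' direction by exhibiting the rationality explicitly under each of the three square conditions, and the ``only if'' direction by a stable-rationality obstruction argument in the spirit of Theorem~\ref{t1.2} (an unramified cohomology / Serre-type computation showing the generic $G$-torsor is nontrivial otherwise).

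\smallskip\noindent\textbf{Main obstacle.}
The hard part will be Step 3 for the borderline $2$-group pieces — precisely pinning down \emph{why} the split $C_m\rtimes C_8$ with even-order center is exceptional while every neighboring group (including the central $C_2$-extensions handled by Theorem~\ref{t1.7}, and the $C_8$-actions with odd-order center) is not, and then carrying out the obstruction computation that yields exactly the trichotomy $-1,2,-2\in(k^\times)^2$. Controlling the bookkeeping of which twisted monomial lattice arises, and matching it to a known $5$-term or Chu–Kang-type rationality criterion, is where the real work lies; the odd-$n$ and trivial-action cases should follow quickly from Steps 1–2 plus Theorem~\ref{t1.3}.
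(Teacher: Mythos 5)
Your Step~2 already contains a structural error: you propose to ``classify the extensions $1\to N\to G\to Q\to 1$,'' but the hypothesis only gives an element $\sigma$ of order $n$, and $\langle\sigma\rangle$ need not be normal in $G$ (take $G=A_4$, $n=3$: the order-$3$ subgroup is not normal, yet all hypotheses of the theorem hold). So there is in general no quotient $Q=G/N$ and no extension to classify. The paper's construction deliberately avoids this: it only uses the coset decomposition $G=H\cup\tau_1H\cup\tau_2H\cup\tau_3H$ to build a $4$-dimensional induced subspace $k x_0\oplus kx_1\oplus kx_2\oplus kx_3$ on which $G$ acts faithfully and monomially with coefficients in $\langle\zeta_n\rangle$; Theorems~\ref{t2.2} and \ref{t2.3} then reduce $k(G)$ to a $3$-variable monomial action on $k(x_1/x_0,x_2/x_0,x_3/x_0)$. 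Normality of $\langle\sigma\rangle$ is only established (via Burnside's normal $p$-complement theorem and Hall's theorem) in the one case $n=2m$, $m$ odd, where the exceptional groups live. Relatedly, your plan to enumerate group shapes and ``match to a known rationality criterion'' has no mechanism to terminate for arbitrary $n$; the paper's entire point (stated explicitly in the introduction) is that the classification of three-dimensional monomial actions by Yamasaki and Hoshi--Kitayama--Yamasaki, packaged as Theorems~\ref{t2.7} and \ref{t2.8}, does this uniformly and isolates the single bad action $\tau:z_1\mapsto z_2\mapsto z_3\mapsto -1/(z_1z_2z_3)$. That input is absent from your proposal, and without it the case analysis is the ``very involved'' route the authors say they are avoiding.

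The exceptional case is also underpowered on both directions. For ``if,'' the rationality of $k(z_1,z_2,z_3)^{\langle\tau\rangle}$ with the $-1$ twist when one of $-1,2,-2$ is a square is itself a nontrivial theorem (\cite[Theorem 1.8]{Ka7}: rational iff $-1\in(k^\times)^2$ or $1\in 4(k^\times)^4$); ``exhibiting the rationality explicitly under each of the three square conditions'' is not a method, it is the hard content. For ``only if,'' a Serre-type unramified-cohomology computation in the spirit of Theorem~\ref{t1.2} would at best rule out stable rationality over particular fields such as $\bm{Q}$; to get the precise criterion over every field one needs the retract-rationality chain the paper uses: $k(G)$ rational $\Rightarrow$ retract rational $\Rightarrow$ $k(C_8)$ retract rational (Saltman, since $C_8$ is a quotient of $C_m\rtimes C_8$) $\Rightarrow$ $k(\zeta_8)/k$ cyclic $\Leftrightarrow$ one of $-1,2,-2\in(k^\times)^2$. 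You name the right endpoint but not the argument that reaches it. Finally, your odd-$n$ case silently needs the observation that $-1\notin\langle\zeta_n\rangle$, which is exactly how the paper excludes the exceptional action there; Schur--Zassenhaus alone does not give rationality.
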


The main tool in proving the above Theorem \ref{t1.8} is Theorem
\ref{t2.7} and Theorem \ref{t2.8} in Section 2 of this paper.
Theorem \ref{t2.7} and Theorem \ref{t2.8} are consequences of
Yamasaki's paper \cite{Ya} and the paper by Hoshi, Kitayama and
Yamasaki \cite{HKY}. Without these two papers, the proof of
Theorem \ref{t1.8} would be very involved.

The proof of Theorem \ref{t1.4} is more computational. It relies
on the classification of 2-groups with a cyclic subgroup of index
$4$ \cite{Ni}. Note that in order to prove Theorem \ref{t1.4} we
may assume the following extra conditions on $G$ and $k$ without
loss of generality
\begin{equation}
n\ge 5, ~ |G|=2^n, ~ \exp(G)=2^{n-2}, \text{ $G$ is non-abelian}, ~
\fn{char}k\ne 2 \text{ and }\zeta_{2^{n-3}}\in k. \label{eq1.1}
\end{equation}

For, it is not difficult to prove Theorem \ref{t1.4} when $G$ is an abelian group
by applying Lenstra's Theorem \cite{Le}.
Moreover, Kuniyoshi's Theorem asserts that,
if $\fn{char}k=p>0$ and $G$ is a $p$-group,
then $k(G)$ is rational over $k$ \cite[Corollary 1.2]{Ku,KP}.
Thus we may assume that $G$ is non-abelian and $\fn{char}k\ne 2$.
When $G$ is a non-abelian group of order $2^n$,
the case of Theorem \ref{t1.4} when $n=4$ is taken care by Theorem \ref{t1.1},
and the case when $\exp(G)=2^{n-1}$ is taken care by Theorem \ref{t1.3}.
Thus only the situation of \eqref{eq1.1} remains.

The key idea to prove Theorem \ref{t1.4} is, by applying Theorem
\ref{t2.2}, to find a low-dimensional faithful $G$-subspace
$W=\bigoplus_{1\le i\le m} k\cdot y_i$ of the regular
representation space $\bigoplus_{g\in G} k\cdot x(g)$ and to show
that $k(y_i: 1\le i\le m)^G$ is rational over $k$. The subspace
$W$ is obtained as an induced representation from some abelian
subgroup of $G$. This method is reminiscent of some techniques
exploited in \cite{Ka4}. However, the proof of Theorem \ref{t1.4}
is more subtle and requires elaboration. For examples, in
\cite{Ka4}, the following two theorems were used to solve the
rationality problem for many groups $G_i$ in Theorem \ref{t2.1}.

\begin{theorem}[\cite{Ka6}] \label{t1.5}
Let $k$ be a field and $G$ be a metacyclic $p$-group.
Assume that {\rm (i)} $\fn{char}k=p>0$,
or {\rm (ii)} $\fn{char}k\ne p$ and $\zeta_e\in k$ where $e=\exp(G)$.
Then $k(G)$ is rational over $k$.
\end{theorem}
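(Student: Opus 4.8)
The plan is to dispose of the characteristic‑$p$ case by Kuniyoshi's theorem and then, when $\fn{char}\,k\ne p$, to exhibit $k(G)$ as a purely transcendental extension of the invariant field of a small monomial representation of $G$ induced from its cyclic normal subgroup, which is then evaluated in two stages. \emph{Step 1.} If $\fn{char}\,k=p$ then $G$ is a $p$-group over a field of characteristic $p$, so $k(G)$ is $k$-rational by \cite{Ku}; assume henceforth $\fn{char}\,k\ne p$ and $\zeta_{e}\in k$, $e=\exp(G)$. (When $G$ is abelian this is already covered by Fischer's theorem, but it costs nothing to keep the argument below uniform.)

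\emph{Step 2 (reduction to an induced monomial action).} Since $G$ is metacyclic, fix a cyclic normal subgroup $N=\langle\sigma\rangle$ of order $p^{a}$ with cyclic quotient $G/N=\langle\bar\tau\rangle$, together with a preimage $\tau$ of a generator; put $t=[G:N]$ and write $\tau^{t}=\sigma^{\ell}\in N$, $\tau^{-1}\sigma\tau=\sigma^{r}$ with $r^{t}\equiv 1\pmod{p^{a}}$. Because $p^{a}\mid e$ and $\zeta_{e}\in k$, there is a faithful character $\chi\colon N\to k^{\times}$, $\chi(\sigma)=\omega$ a primitive $p^{a}$-th root of unity. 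Let $W=\fn{Ind}_{N}^{G}\chi$; it has dimension $t$, it is faithful (because $\chi$ is), and it embeds in the regular representation $\bigoplus_{g\in G}k\cdot x(g)$ since $\chi$ is a direct summand of $k[N]$. By Theorem \ref{t2.2} it suffices to prove that $k(W)^{G}$ is $k$-rational. Writing $y_{j}=\tau^{j}\otimes v$ for a basis of $W$, the $G$-action on $k(W)=k(y_{0},\dots,y_{t-1})$ is monomial, namely
\[
\sigma\colon y_{j}\longmapsto\omega^{\,r^{-j}}y_{j}\ \ (0\le j\le t-1),\qquad \tau\colon y_{j}\longmapsto y_{j+1}\ \ (j<t-1),\quad y_{t-1}\longmapsto\zeta\,y_{0},
\]
with $\zeta=\chi(\sigma^{\ell})=\omega^{\ell}$. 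A short computation shows $\fn{ord}_{G}(\tau)=t\cdot\fn{ord}(\zeta)$, hence $t\mid e$ and $t\cdot\fn{ord}(\zeta)\mid e$; consequently $\zeta_{t}\in k$ and $\zeta$ has a $t$-th root in $k$. Thus $\exp(G)$ already provides every root of unity the remaining steps will require, which is exactly why this is the right hypothesis.

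\emph{Step 3 (peeling off $N$, then $G/N$).} First take $N$-invariants. Writing $y^{\mathbf c}=\prod_{j}y_{j}^{c_{j}}$ for $\mathbf c=(c_{j})\in\mathbb Z^{t}$, the diagonality of $\sigma$ gives $k(W)^{N}=\fn{Frac}\,k[\,y^{\mathbf c}:\mathbf c\in L\,]$, where $L=\{\mathbf c\in\mathbb Z^{t}:\sum_{j}c_{j}r^{-j}\equiv 0\pmod{p^{a}}\}$ is a sublattice of $\mathbb Z^{t}$ of index $p^{a}$; taking a $\mathbb Z$-basis $b_{1},\dots,b_{t}$ of $L$ gives algebraically independent monomials $z_{i}=y^{b_{i}}$ with $k(W)^{N}=k(z_{1},\dots,z_{t})$, which is $k$-rational. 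The lattice $L$ is stable under the cyclic shift (indeed $L=\ker\!\big(\mathbb Z[C_{t}]\twoheadrightarrow\mathbb Z/p^{a}\big)$ as $\mathbb Z[C_{t}]$-modules, where $C_{t}$ acts on $\mathbb Z/p^{a}$ by multiplication by $r^{-1}$), so $G/N\cong C_{t}$ acts on $k(z_{1},\dots,z_{t})$ again by monomial $k$-automorphisms with multipliers in $\mu_{e}(k)$; it remains to show this residual cyclic monomial action has $k$-rational invariant field. When the permutation part of this $C_{t}$-action is a single free orbit, one diagonalizes it by a discrete Fourier transform (using $\zeta_{t}\in k$), absorbs the multipliers (they have $t$-th roots in $k$), and invokes Fischer's theorem; in the remaining cases one descends through the explicit monomial action of $C_{t}$ on $L$, eliminating the $z_{i}$ one at a time.

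\emph{Main obstacle.} The genuinely hard part is the last point when $p=2$ and $r\equiv-1\pmod{2^{a}}$, i.e.\ for the generalized quaternion and quasi-dihedral metacyclic $2$-groups: there $L$ is not a permutation $\mathbb Z[C_{t}]$-lattice, the residual $C_{t}$-action on $k(z_{1},\dots,z_{t})$ cannot be linearized, and one must descend through a chain of ruled (degree-two) extensions. This is already the mechanism for $G=Q_{8}$, where after Step 3 the invariant field $k(W)^{G}$ is the function field of the $k$-rational surface $w^{2}=us^{2}-4u^{3}$ and the point $(s,w)=(2u,0)$ exhibits the conic over $k(u)$ as a line. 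Organizing this descent uniformly forces one to bring in the classification of metacyclic $p$-groups in order to normalize $r$ and $\ell$, and then to check at each step that only $e$-th roots of unity are consumed — this bookkeeping, more than any single identity, is the crux.
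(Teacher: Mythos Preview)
The paper does not prove Theorem~\ref{t1.5}; it is quoted from \cite{Ka6} as background and is never argued in the present text. So there is no ``paper's own proof'' to compare against, and your proposal must stand on its own.

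As written it does not. Steps~1--2 are fine, and the reduction at the start of Step~3 (taking $N$-invariants to obtain $k(z_1,\dots,z_t)$ with a residual monomial $C_t$-action on the lattice $L=\ker\bigl(\bm Z[C_t]\twoheadrightarrow\bm Z/p^{a}\bigr)$) is correct. The gap is everything after that. You treat only the case where $L$ happens to be $\bm Z[C_t]$ (``a single free orbit''), and for the remaining cases you write ``one descends through the explicit monomial action of $C_t$ on $L$, eliminating the $z_i$ one at a time'' without saying how. Your own ``Main obstacle'' paragraph then concedes that for $p=2$ with $r\equiv -1\pmod{2^{a}}$ the lattice $L$ is not a permutation $\bm Z[C_t]$-module, that no linearization is available, and that carrying out the descent ``forces one to bring in the classification of metacyclic $p$-groups'' and to check root-of-unity bookkeeping you describe as ``the crux''. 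None of this is done; the $Q_8$ example is an illustration, not a general argument. In short, the proposal is an outline whose decisive step --- rationality of the residual $C_t$-monomial action on $k(z_1,\dots,z_t)$ for an arbitrary metacyclic $p$-group under the sole hypothesis $\zeta_e\in k$ --- is asserted rather than proved.

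A smaller point: in the free-orbit case you say the multipliers can be absorbed because ``they have $t$-th roots in $k$''. What you actually need is a $t$-th root of the product of the multipliers around the cycle, and the justification should be that the order of $\tau$ on $k(z_1,\dots,z_t)$ divides $e$ together with $\zeta_e\in k$; as stated, the sentence ``$t\cdot\fn{ord}(\zeta)\mid e$'' from Step~2 refers to $\zeta=\omega^{\ell}$, not to the cycle-product arising after passing to $L$, so this deserves a line of argument rather than an assertion.
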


\begin{theorem}[{\cite[Theorem 1.4]{Ka3}}] \label{t1.6}
Let $k$ be a field and $G$ be a finite group.
Assume that {\rm (i)} $G$ contains an abelian normal subgroup $H$ so that $G/H$ is cyclic of order $n$,
{\rm (ii)} $\bm{Z}[\zeta_n]$ is a unique factorization domain,
and {\rm (iii)} $\zeta_e \in k$ where $e$ is the exponent of $G$.
If $G\to GL(V)$ is any finite-dimensional linear representation of $G$ over $k$,
then $k(V)^G$ is rational over $k$.
\end{theorem}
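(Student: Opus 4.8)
The plan is to diagonalize the abelian normal subgroup $H$, thereby turning the $G$-action on $k(V)$ into a monomial action, and then to reduce the problem to a purely monomial action of the cyclic quotient $C_n=G/H$ whose rationality is controlled by the arithmetic of $\bm{Z}[\zeta_n]$. Write $e=\exp(G)$; since $\exp(H)$ divides $e$ and $\zeta_e\in k$ by (iii), every character of $H$ takes values in $k^{\times}$. Note also that if $\sigma\in G$ maps to a generator of $C_n$ then $\sigma^{n}\in H$, so $n$ divides the order of $\sigma$, whence $n\mid e$ and $\zeta_n\in k$ as well. Replacing $G$ by its image in $\fn{GL}(V)$ we may assume the representation is faithful; the hypotheses are inherited because the class number of $\bm{Q}(\zeta_d)$ divides that of $\bm{Q}(\zeta_n)$ for $d\mid n$.

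First I would diagonalize $H$. As $H$ is abelian of exponent dividing $e$ and $\zeta_e\in k$, the space $V$ decomposes over $k$ into one-dimensional $H$-eigenspaces, so we may pick a basis $v_1,\dots,v_m$ with $h\cdot v_i=\chi_i(h)\,v_i$, where $\chi_i\in\fn{Hom}(H,k^{\times})$. Because $H\trianglelefteq G$, conjugation by $\sigma$ permutes the characters by $\chi\mapsto{}^{\sigma}\!\chi$ and carries the $\chi_i$-eigenline to the ${}^{\sigma}\!\chi_i$-eigenline; thus $\sigma\cdot v_i=c_i\,v_{\tau(i)}$ for scalars $c_i\in k^{\times}$ and a permutation $\tau$. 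Hence $G$ acts on $k(V)=k(v_1,\dots,v_m)$ by monomial $k$-automorphisms.

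Next I would pass to $H$-invariants. Since $H$ acts diagonally by roots of unity lying in $k$, a Laurent monomial $\prod_i v_i^{a_i}$ is $H$-invariant exactly when $\prod_i\chi_i^{a_i}=1$, so the admissible exponent vectors form a finite-index sublattice of $\bm{Z}^{m}$ and $L:=k(V)^{H}$ is the fraction field of the corresponding Laurent-monomial algebra, which is $k$-rational. As $\sigma$ carries monomials to monomials, the induced action of $C_n=\langle\sigma\rangle$ on $L$ is again monomial, and $k(V)^{G}=L^{C_n}$. We are thus reduced to showing that $L^{C_n}$ is rational for a purely monomial action of a cyclic group, encoded by a $\bm{Z}[C_n]$-lattice $M$ together with the $1$-cocycle of scalars $(c_i)$.

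The hard part will be this cyclic descent, and it is precisely where hypothesis (ii) enters. The rational group algebra splits as $\bm{Q}[C_n]\cong\prod_{d\mid n}\bm{Q}(\zeta_d)$, and the only factor not already handled by the smaller cyclic groups is $\bm{Q}(\zeta_n)$; the assumption that $\bm{Z}[\zeta_n]$ is a UFD, hence a PID, forces every fractional ideal there to be principal, so that after the standard flasque/permutation reductions the $C_n$-lattice $M$ splits into cyclic pieces of the shape $\bm{Z}[\zeta_d]$, $d\mid n$. On each piece I would trivialize the scalar cocycle $(c_i)$ by a monomial change of variables via Hilbert's Theorem~90 (available because $\zeta_e\in k$), after which, since $\zeta_n\in k$, the rationality of the invariant field follows from the classical rationality of multiplicative invariants of cyclic groups with enough roots of unity (Fischer's theorem together with the Endo--Miyata/Lenstra analysis of $\bm{Z}[C_n]$-lattices). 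Assembling the pieces yields the rationality of $L^{C_n}$, and therefore of $k(V)^{G}$. The essential obstacle---and the reason (ii) cannot be dropped---is the lattice-theoretic control of $M$: a non-principal ideal class in $\bm{Z}[\zeta_n]$ would produce a non-quasi-permutation summand that could obstruct rationality, exactly the arithmetic phenomenon behind genuine failures such as the $Q_{16}$ example of Theorem~\ref{t1.2}.
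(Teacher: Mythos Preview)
This theorem is not proved in the present paper; it is quoted from \cite[Theorem 1.4]{Ka3} and used as a black box. So there is no in-paper argument to compare against, and your sketch has to be judged on its own.

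Your overall architecture---diagonalize $H$, take $H$-invariants to obtain a rational field with a monomial $C_n$-action, and then analyze that action via the $\bm{Z}[C_n]$-lattice---is the right one and is indeed the shape of Kang's original argument. Two places in your write-up need real work, however.

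First, the line ``$\sigma\cdot v_i=c_i\,v_{\tau(i)}$'' is not automatic. If an $H$-eigencharacter occurs with multiplicity greater than one, then $\sigma$ only maps the $\chi$-isotypic space to the ${}^{\sigma}\!\chi$-isotypic space linearly, not by a monomial matrix. You must choose the bases more carefully: within a $\sigma$-orbit of characters of length $d\mid n$, transport a basis of $V_{\chi}$ by $\sigma,\sigma^{2},\dots,\sigma^{d-1}$, and then further diagonalize the return map $\sigma^{d}$ on $V_{\chi}$ (possible because $\zeta_e\in k$). Only after this refinement is the action genuinely monomial. Your reduction to a faithful representation is also slightly off: the image $\bar G$ has $\bar G/\bar H$ cyclic of some order $d$ dividing $n$, and you need $\bm{Z}[\zeta_d]$ to be a UFD; the class-number divisibility $h(\bm{Q}(\zeta_d))\mid h(\bm{Q}(\zeta_n))$ you invoke is correct but nontrivial, and should be cited.

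Second, and more seriously, your ``hard part'' is a string of name-drops rather than an argument. The assertion that after ``standard flasque/permutation reductions'' the $C_n$-lattice ``splits into cyclic pieces of the shape $\bm{Z}[\zeta_d]$'' is not true in general---$\bm{Z}[C_n]$-lattices do not decompose this simply, even when $\bm{Z}[\zeta_n]$ is a PID. What the UFD hypothesis actually buys is that certain \emph{ideal}-type lattices become free, which in turn makes the relevant flasque class trivial; this is the Endo--Miyata input, and it needs to be stated precisely. Likewise, ``trivialize the scalar cocycle by Hilbert~90'' is too glib: the obstruction lives in $H^{1}(C_n,\fn{Hom}(M,k^{\times}))$, not in a Galois $H^{1}$, and its vanishing is again where $\zeta_e\in k$ is used in a specific way. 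If you want a self-contained proof, you should either reproduce Kang's inductive argument on the lattice from \cite{Ka3} or cite it explicitly; as written, the last paragraph gestures at the right circle of ideas but does not constitute a proof.
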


Because we assume $\zeta_{2^{n-3}}\in k$ (instead of
$\zeta_{2^{n-2}}\in k$) in \eqref{eq1.1}, the above two theorems
are not directly applicable in the present situation. This is the
reason why we should find judiciously a faithful subspace $W$.
Fortunately we can find these subspaces $W$ in an almost unified
way. In fact, the proof for the group $G_8$ in Theorem \ref{t2.1}
is a typical case; the proof for other groups is either similar to
that of $G_8$ or has appeared in \cite{Ka4}.

We organize this paper as follows. In Section 2 we recall
Ninomiya's classification of non-abelian groups $G$ with $|G|=2^n$
and $\exp(G)=2^{n-2}$ (where $n\ge 4$). We also recall some
preliminaries which will be used in the proof of Theorem
\ref{t1.4} and Theorem \ref{t1.8}. The proof of Theorem \ref{t1.4}
is given in Section 3. In Section 4 the proof of Theorem
\ref{t1.8} is given. Note that Theorem \ref{t4.4} is of interest
itself.

\begin{idef}{Standing Notations.}
Throughout this article, $k(x_1,\ldots,x_n)$ or $k(x,y)$ will be
rational function fields over $k$. $\zeta_n$ denotes a primitive
$n$-th root of unity in some algebraic extension of the field $k$.
Whenever we write $\zeta_n \in k$, it is understood that either
$\fn{char}k=0$ or $\fn{char}k=p >0$ with $gcd \{p, n \}=1$.

A field extension $L$ of $k$ is called rational over $k$ (or
$k$-rational, for short) if $L\simeq k(x_1,\ldots,x_n)$ over $k$
for some integer $n$. $L$ is stably rational over $k$ if
$L(y_1,\ldots,y_m)$ is rational over $k$ for some $y_1,\ldots,y_m$
which are algebraically independent over $L$. Recall that, if $G$
is a finite group, $k(G)$ denotes $k(x_g:g\in G)^G$ where $h\cdot
x_g=x_{hg}$ for $h,g\in G$.
\end{idef}
The exponent of a finite group $G$, denoted by $\exp(G)$, is
$\fn{lcm}\{\fn{ord}(g):g\in G\}$ where $\fn{ord}(g)$ is the order
of $g$. The cyclic group of order $n$ is denoted by $C_n$.

If $G$ is a group acting on a rational function field
$k(x_1,\ldots,x_n)$ by $k$-automorphisms, the actions of $G$ are
called purely monomial actions if, for any $\sigma \in G$, any
$1\le j\le n$, $\sigma\cdot x_j=\prod_{1\le i\le n} x_i^{a_{ij}}$
where $a_{ij}\in\bm{Z}$; similarly, the actions of $G$ are called
monomial actions if, for any $\sigma \in G$, any $1\le j\le n$,
$\sigma\cdot x_j=\lambda_j(\sigma)\cdot\prod_{1\le i\le n}
x_i^{a_{ij}}$ where $a_{ij}\in\bm{Z}$ and $\lambda_j(\sigma)\in
k\backslash \{0\}$. When $G$ acts on $k(x_1,\ldots,x_n)$ by
monomial actions and $\alpha \in k\backslash \{0\}$, we say that
$G$ acts on $k(x_1,\ldots,x_n)$ by monomial actions with
coefficients in $\langle \alpha \rangle$ if $\lambda_j(\sigma)\in
\langle \alpha \rangle$ for all $\sigma \in G$, for all $1 \le j
\le n$ (here $\langle \alpha \rangle$ denotes the multiplicative
subgroup in $k\backslash \{0\}$ generated by $\alpha$). All the
groups in this article are finite groups.

\section{Preliminaries}

\begin{theorem}[Ninomiya {\cite[Theorem 2]{Ni}}] \label{t2.1}
Let $n\ge 4$.
The finite non-abelian groups of order $2^n$ which have a cyclic subgroup of index $4$,
but haven't a cyclic subgroup of index $2$ are of the following types:
\begin{enumerate}
\item[{\rm (I)}] $n\ge 4$ \\
$G_1=\langle\sigma,\tau:\sigma^{2^{n-2}}=\tau^4=1,\tau^{-1}\sigma\tau=\sigma^{1+2^{n-3}}\rangle$, \\
$G_2=\langle\sigma,\tau,\lambda: \sigma^{2^{n-2}}=\lambda^2=1,\sigma^{2^{n-3}}=\tau^2,
\tau^{-1}\sigma\tau=\sigma^{-1},\sigma\lambda=\lambda\sigma,\tau\lambda=\lambda\tau\rangle$, \\
$G_3=\langle\sigma,\tau,\lambda: \sigma^{2^{n-2}}=\tau^2=\lambda^2=1,
\tau^{-1}\sigma\tau=\sigma^{-1},\sigma\lambda=\lambda\sigma,\tau\lambda=\lambda\tau\rangle$, \\
$G_4=\langle\sigma,\tau,\lambda:\sigma^{2^{n-2}}=\tau^2=\lambda^2=1,\sigma\tau=\tau\sigma,
\sigma\lambda=\lambda\sigma,\lambda^{-1}\tau\lambda=\sigma^{2^{n-3}}\tau\rangle$, \\
$G_5=\langle \sigma,\tau,\lambda: \sigma^{2^{n-2}}=\tau^2=\lambda^2=1,\sigma\tau=\tau\sigma,
\lambda^{-1}\sigma\lambda=\sigma\tau,\tau\lambda=\lambda\tau\rangle$.
\item[{\rm (II)}] $n\ge 5$ \\
$\begin{array}{@{}l@{\;}c@{\;}l}
G_6 &=& \langle\sigma,\tau:\sigma^{2^{n-2}}=\tau^4=1,\tau^{-1}\sigma\tau=\sigma^{-1}\rangle, \\
G_7 &=& \langle\sigma,\tau:\sigma^{2^{n-2}}=\tau^4=1,\tau^{-1}\sigma\tau=\sigma^{-1+2^{n-3}}\rangle, \\
G_8 &=& \langle\sigma,\tau:\sigma^{2^{n-2}}=1,\sigma^{2^{n-3}}=\tau^4,\tau^{-1}\sigma\tau=\sigma^{-1}\rangle, \\
G_9 &=& \langle\sigma,\tau:\sigma^{2^{n-2}}=\tau^4=1,\sigma^{-1}\tau\sigma=\tau^{-1}\rangle, \\
G_{10} &=& \langle\sigma,\tau,\lambda:\sigma^{2^{n-2}}=\tau^2=\lambda^2=1,\tau^{-1}\sigma\tau=\sigma^{1+2^{n-3}},
\sigma\lambda=\lambda\sigma,\tau\lambda=\lambda\tau\rangle, \\
G_{11} &=& \langle\sigma,\tau,\lambda:\sigma^{2^{n-2}}=\tau^2=\lambda^2=1,\tau^{-1}\sigma\tau=\sigma^{-1+2^{n-3}},
\sigma\lambda=\lambda\sigma,\tau\lambda=\lambda\tau\rangle, \\
G_{12} &=& \langle\sigma,\tau,\lambda:\sigma^{2^{n-2}}=\tau^2=\lambda^2=1,\sigma\tau=\tau\sigma,
\lambda^{-1}\sigma\lambda=\sigma^{-1},\lambda^{-1}\tau\lambda=\sigma^{2^{n-3}}\tau\rangle, \\
G_{13} &=& \langle\sigma,\tau,\lambda:\sigma^{2^{n-2}}=\tau^2=\lambda^2=1,\sigma\tau=\tau\sigma,
\lambda^{-1}\sigma\lambda=\sigma^{-1}\tau,\tau\lambda=\lambda\tau\rangle, \\
G_{14} &=& \langle\sigma,\tau,\lambda:\sigma^{2^{n-2}}=\tau^2=1,\sigma^{2^{n-3}}=\lambda^2,\sigma\tau=\tau\sigma,
\lambda^{-1}\sigma\lambda=\sigma^{-1}\tau,\tau\lambda=\lambda\tau\rangle, \\
G_{15} &=& \langle\sigma,\tau,\lambda:\sigma^{2^{n-2}}=\tau^2=\lambda^2=1,\tau^{-1}\sigma\tau=\sigma^{1+2^{n-3}},
\lambda^{-1}\sigma\lambda=\sigma^{-1+2^{n-3}},\tau\lambda=\lambda\tau\rangle, \\
G_{16} &=& \langle\sigma,\tau,\lambda:\sigma^{2^{n-2}}=\tau^2=\lambda^2=1,\tau^{-1}\sigma\tau=\sigma^{1+2^{n-3}},
\lambda^{-1}\sigma\lambda=\sigma^{-1+2^{n-3}}, \\
&& ~\lambda^{-1}\tau\lambda=\sigma^{2^{n-3}}\tau\rangle, \\
G_{17} &=& \langle\sigma,\tau,\lambda:\sigma^{2^{n-2}}=\tau^2=\lambda^2=1,\tau^{-1}\sigma\tau=\sigma^{1+2^{n-3}},
\lambda^{-1}\sigma\lambda=\sigma\tau,\tau\lambda=\lambda\tau\rangle, \\
G_{18} &=& \langle\sigma,\tau,\lambda:\sigma^{2^{n-2}}=\tau^2=1,\lambda^2=\tau,
\tau^{-1}\sigma\tau=\sigma^{1+2^{n-3}},\lambda^{-1}\sigma\lambda=\sigma^{-1}\tau\rangle.
\end{array}$
\item[{\rm (III)}] $n\ge 6$ \\
$\begin{array}{@{}r@{\;}l}
G_{19}=& \langle \sigma,\tau:\sigma^{2^{n-2}}=\tau^4=1,\tau^{-1}\sigma\tau=\sigma^{1+2^{n-4}}\rangle, \\
G_{20}=& \langle \sigma,\tau:\sigma^{2^{n-2}}=\tau^4=1,\tau^{-1}\sigma\tau=\sigma^{-1+2^{n-4}}\rangle, \\
G_{21}=& \langle \sigma,\tau:\sigma^{2^{n-2}}=1,\sigma^{2^{n-3}}=\tau^4,\sigma^{-1}\tau\sigma=\tau^{-1}\rangle, \\
G_{22}=& \langle\sigma,\tau,\lambda:\sigma^{2^{n-2}}=\tau^2=\lambda^2=1,\sigma\tau=\tau\sigma,
\lambda^{-1}\sigma\lambda=\sigma^{1+2^{n-4}}\tau,\lambda^{-1}\tau\lambda=\sigma^{2^{n-3}}\tau\rangle, \\
G_{23}=& \langle\sigma,\tau,\lambda:\sigma^{2^{n-2}}=\tau^2=\lambda^2=1,\sigma\tau=\tau\sigma,
\lambda^{-1}\sigma\lambda=\sigma^{-1+2^{n-4}}\tau,\lambda^{-1}\tau\lambda=\sigma^{2^{n-3}}\tau\rangle, \\
G_{24}=& \langle\sigma,\tau,\lambda:\sigma^{2^{n-2}}=\tau^2=\lambda^2=1,\tau^{-1}\sigma\tau=\sigma^{1+2^{n-3}},
\lambda^{-1}\sigma\lambda=\sigma^{-1+2^{n-4}},\tau\lambda=\lambda\tau\rangle, \\
G_{25}=& \langle\sigma,\tau,\lambda:\sigma^{2^{n-2}}=\tau^2=1,\sigma^{2^{n-3}}=\lambda^2,
\tau^{-1}\sigma\tau=\sigma^{1+2^{n-3}},\lambda^{-1}\sigma\lambda=\sigma^{-1+2^{n-4}}, \\
& ~\tau\lambda=\lambda\tau\rangle,
\end{array}$
\item[{\rm (IV)}] $n=5$ \\
$G_{26}= \langle\sigma,\tau,\lambda:\sigma^8=\tau^2=1,\sigma^4=\lambda^2,\tau^{-1}\sigma\tau=\sigma^5,
\lambda^{-1}\sigma\lambda=\sigma\tau,\tau\lambda=\lambda\tau\rangle$.
\end{enumerate}
\end{theorem}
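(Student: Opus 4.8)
The plan is to establish this classification by the standard two-stage route for $2$-groups with a distinguished large subgroup: a structural reduction exhibiting $G$ as an extension built on the cyclic subgroup of index $4$, followed by an enumeration of the possible extensions and a separation of the resulting isomorphism types. Fix a cyclic subgroup $A=\langle\sigma\rangle$ with $|A|=2^{n-2}$, so $[G:A]=4$. Two classical facts organize everything. First, in a $2$-group normalizers grow strictly, so $A\subsetneq N_G(A)$ and hence $[G:N_G(A)]\le 2$; thus either $A\triangleleft G$, or $M:=N_G(A)$ is a maximal (in particular normal) subgroup with $[M:A]=2$. Second, a group of order $2^{n-1}$ with a cyclic subgroup of index $2$ is isomorphic to one of $C_{2^{n-1}}$, $C_{2^{n-2}}\times C_2$, the dihedral, generalized quaternion, quasi-dihedral, or modular group (this uses $n\ge4$); the hypothesis that $G$ has no cyclic subgroup of index $2$ then serves to discard candidates at every turn.

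\textbf{Case 1: $A\triangleleft G$.} Here $Q:=G/A$ has order $4$, so $Q\simeq C_4$ or $Q\simeq C_2\times C_2$, and conjugation gives a homomorphism $\varphi\colon Q\to\fn{Aut}(A)$. Since $\fn{Aut}(C_{2^m})\simeq C_2\times C_{2^{m-2}}$ for $m\ge3$ while $\fn{Aut}(C_4)\simeq C_2$, one first lists the admissible images of $\varphi$; the relevant automorphisms are $\sigma\mapsto\sigma^{-1}$, $\sigma\mapsto\sigma^{1+2^{n-3}}$, $\sigma\mapsto\sigma^{-1+2^{n-3}}$ and, only once $n\ge6$, the order-$4$ automorphisms $\sigma\mapsto\sigma^{\pm1+2^{n-4}}$. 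For each admissible $\varphi$ one runs through the extension classes in $H^2(Q,A)$, chooses preimages of generators of $Q$, and reads off a presentation in $\sigma$ together with one or two further generators $\tau,\lambda$; this is exactly where the relations $\tau^4=1$, $\sigma^{2^{n-3}}=\tau^4$, $\lambda^2=1$ and their analogues arise. One then deletes the abelian groups and those that acquire a cyclic subgroup of order $2^{n-1}$. This yields most of the families on the list.

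\textbf{Case 2: $A\not\triangleleft G$.} Now $M=N_G(A)$ is a maximal subgroup, normal in $G$, with cyclic $A$ of index $2$; by the second classical fact together with the no-cyclic-index-$2$ hypothesis, $M$ is one of $C_{2^{n-2}}\times C_2$, $D_{2^{n-1}}$, $Q_{2^{n-1}}$, $SD_{2^{n-1}}$ or $M_{2^{n-1}}$. Writing $G=\langle M,\lambda\rangle$ with $\lambda^2\in M$, the failure of $A\triangleleft G$ means precisely that conjugation by $\lambda$ carries $\sigma$ to an element of $M\setminus A$, of the shape $\sigma^{\pm1}\tau$ up to the central involution $\sigma^{2^{n-3}}$, with $\tau\in M\setminus A$ of order $2$. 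Recording the finitely many possibilities for the action of $\lambda$ on $M$ and for the value of $\lambda^2$, and merging data that give isomorphic $G$, produces the remaining families (those whose presentations have $\lambda^{-1}\sigma\lambda$ lying outside $\langle\sigma\rangle$).

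\textbf{Completeness and separation.} The main obstacle is the last step: showing that the $26$ types are pairwise non-isomorphic and that nothing has been missed. For this I would attach to each $G_i$ a package of invariants — the isomorphism types of $Z(G)$, of $[G,G]$, of the abelianization and of the Frattini quotient, whether $G$ is metacyclic, and the multiset of element orders — and verify that this package distinguishes all the families. Special care is needed for the small values $n=4,5,6$, where $\fn{Aut}(C_{2^{n-2}})$ degenerates: this is why certain presentations are genuinely new only for $n\ge5$ or $n\ge6$ (explaining the stratification into types (I)–(III)) and why the sporadic group $G_{26}$ occurs only at $n=5$; one must also check that a group reachable both in Case 1 and in Case 2 is listed only once. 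Granting the two classical facts quoted at the outset, the rest is a finite but laborious bookkeeping, the genuinely delicate points being the exhaustiveness of the $H^2$-enumeration in Case 1 and the isomorphism separation just described.
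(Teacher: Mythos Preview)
The paper does not prove this theorem at all: Theorem~\ref{t2.1} is quoted verbatim from Ninomiya \cite[Theorem~2]{Ni} and is used as a black-box input to the proof of Theorem~\ref{t1.4}. There is therefore nothing in the paper to compare your proposal against.

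That said, your outline is a reasonable sketch of how such a classification is typically carried out, and it is broadly in the spirit of Ninomiya's original argument (which proceeds by case analysis on the structure of $G/\langle\sigma\rangle$ and on the possible conjugation actions). But as a self-contained proof it remains a plan rather than an argument: the phrases ``one runs through the extension classes in $H^2(Q,A)$'' and ``recording the finitely many possibilities \ldots\ and merging data that give isomorphic $G$'' hide exactly the work that makes this theorem nontrivial. In particular, your proposed invariant package (center, commutator subgroup, abelianization, Frattini quotient, metacyclicity, element orders) is not obviously sufficient to separate all $26$ families for every $n$ --- several of the $G_i$ share many of these invariants --- so the separation step would need explicit verification rather than assertion. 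If you actually want to establish the classification, you should either reproduce Ninomiya's case-by-case argument in full or cite \cite{Ni} as the paper does.
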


\begin{theorem}[{\cite[Theorem 1]{HK}}] \label{t2.2}
Let $G$ be a finite group acting on $L(x_1,\ldots,x_n)$,
the rational function field of $n$ variables over a field $L$.
Suppose that
\begin{enumerate}
\item[{\rm (i)}] for any $\sigma\in G$, $\sigma(L)\subset L$;
\item[{\rm (ii)}] the restriction of the action of $G$ to $L$ is faithful;
\item[{\rm (iii)}] for any $\sigma\in G$,
\[
\begin{pmatrix} \sigma(x_1) \\ \sigma(x_2) \\ \vdots \\ \sigma(x_n) \end{pmatrix}
=A(\sigma)\cdot \begin{pmatrix} x_1 \\ x_2 \\ \vdots \\ x_n \end{pmatrix}+B(\sigma)
\]
where $A(\sigma)\in GL_n(L)$ and $B(\sigma)$ is an $n\times 1$ matrix over $L$.
\end{enumerate}

Then there exist elements $z_1,\ldots,z_n\in L(x_1,\ldots,x_n)$ such that
$L(x_1,\ldots,x_n)=L(z_1,\ldots,z_n)$ and $\sigma(z_i)=z_i$ for any $\sigma\in G$,
any $1\le i\le n$.
\end{theorem}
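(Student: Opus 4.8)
The plan is to linearize the given affine $G$-action by enlarging the ambient $L$-vector space by one dimension and then to invoke Speiser's lemma (the vector-space form of Hilbert's Theorem~90, also known in the rationality literature as the ``no-name lemma'' for faithful semilinear actions).

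First, by hypotheses (i) and (ii) the group $G$ acts faithfully on $L$ by field automorphisms; putting $K=L^G$, Artin's theorem gives that $L/K$ is a finite Galois extension whose Galois group is canonically identified with $G$, with $[L:K]=|G|$. Next I would consider the $L$-subspace $U=L\cdot 1+\sum_{i=1}^{n}L\cdot x_i$ of $L(x_1,\ldots,x_n)$. Since $x_1,\ldots,x_n$ are algebraically independent over $L$, the set $\{1,x_1,\ldots,x_n\}$ is an $L$-basis of $U$, so $\dim_L U=n+1$. Using (iii) together with the fact that each $\sigma\in G$ is a field automorphism with $\sigma(L)\subseteq L$, one checks that $U$ is $G$-stable: indeed $\sigma(1)=1$ and $\sigma(x_i)=\sum_{j}A(\sigma)_{ij}x_j+B(\sigma)_i\cdot 1\in U$; moreover $\sigma(cu)=\sigma(c)\,\sigma(u)$ for $c\in L$ and $u\in U$, so the action of $G$ on $U$ is $\sigma$-semilinear.

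Now I would apply Speiser's lemma: since $L/K$ is finite Galois with group $G$ and $U$ is a finite-dimensional $L$-vector space carrying a semilinear $G$-action, the multiplication map $L\otimes_K U^G\to U$ is an isomorphism; in particular $U$ has an $L$-basis consisting of $G$-fixed vectors and $\dim_K U^G=n+1$. Because $1\in U^G$, extend $\{1\}$ to a $K$-basis $\{1,z_1,\ldots,z_n\}$ of $U^G$, which is then also an $L$-basis of $U$. Comparing it with the $L$-basis $\{1,x_1,\ldots,x_n\}$ and writing $z_i=\sum_{j=1}^{n}c_{ij}x_j+d_i$ with $c_{ij},d_i\in L$, the matrix $(c_{ij})\in M_n(L)$ is necessarily invertible, for otherwise some nontrivial $L$-combination of $z_1,\ldots,z_n$ would lie in $L\cdot 1$, contradicting the linear independence of $\{1,z_1,\ldots,z_n\}$. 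Hence each $x_j$ is an $L$-affine combination of $z_1,\ldots,z_n$, so $L(x_1,\ldots,x_n)=L(z_1,\ldots,z_n)$, while $\sigma(z_i)=z_i$ for all $\sigma\in G$ by construction; this finishes the argument.

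There is no genuine obstacle here: the whole content is Speiser's lemma (equivalently, the vanishing of $H^1(G,GL_n(L))$ and of $H^1(G,L^{\oplus n})$), and the only points needing care are the verification that $U$ is $G$-stable with a semilinear action and the final change-of-basis bookkeeping showing that the fixed coordinates $z_i$ still generate the same rational function field. A slightly more computational alternative would be to straighten the linear parts $A(\sigma)$ first, via the multiplicative Hilbert~90 for $GL_n$, and then the translation parts $B(\sigma)$ via the additive Hilbert~90; the one-extra-dimension trick above merely merges these two steps into one.
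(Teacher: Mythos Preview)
Your argument is correct. The paper does not supply its own proof of this statement; Theorem~\ref{t2.2} is quoted from \cite[Theorem~1]{HK} and used as a black box throughout Sections~3 and~4. The proof you give is precisely the standard one underlying that reference: embed the affine $G$-module $L\oplus\bigoplus_i Lx_i$ as a semilinear $G$-representation over the Galois extension $L/L^G$, invoke Speiser's form of Hilbert~90 to trivialize it, and read off $G$-invariant generators $z_1,\ldots,z_n$. The only point worth a remark is that your appeal to Artin's theorem uses the full strength of hypothesis~(ii) (faithfulness on $L$), which is exactly what makes $L/L^G$ Galois with group $G$ and hence makes Speiser's lemma available; this is the one place in the argument where that hypothesis is consumed.
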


\begin{theorem}[{\cite[Theorem 3.1]{AHK}}] \label{t2.3}
Let $L$ be any field, $L(x)$ the rational function field of one variable over $L$,
and $G$ a finite group acting on $L(x)$.
Suppose that, for any $\sigma\in G$, $\sigma(L)\subset L$ and $\sigma(x)=a_\sigma\cdot x+b_\sigma$
where $a_\sigma,b_\sigma\in L$ and $a_\sigma\ne 0$.
Then $L(x)^G=L^G(f)$ for some polynomial $f\in L[x]$.
In fact, if $m=\min \{\deg g(x): g(x)\in L[x]^G\backslash L\}$,
any polynomial $f\in L[x]^G$ with $\deg f=m$ satisfies the property $L(x)^G=L^G(f)$.
\end{theorem}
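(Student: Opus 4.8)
The plan is to peel off the part of $G$ that acts trivially on $L$, reduce to a faithful affine action on one variable, invoke Hilbert's Theorem~90 there to produce a degree‑one invariant, and then identify the minimal invariant degree by a dimension count. We may assume $G$ acts faithfully on $L(x)$: if $N$ is the kernel of $G\to\fn{Aut}L(x)$ then $N$ fixes $L$ pointwise and fixes $x$, so replacing $G$ by $G/N$ changes neither $L(x)^G$, nor $L^G$, nor the set $L[x]^G$. Since $\sigma(L)\subseteq L$ for every $\sigma\in G$ and $G$ is a group, each $\sigma$ restricts to an automorphism of $L$; and since $a_\sigma\neq 0$, each $\sigma$ maps the polynomial ring $L[x]$ to itself, preserving the degree in $x$. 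Let $H=\ker(G\to\fn{Aut}L)$, a normal subgroup of $G$, and put $F=L^G$. Then $G$ acts on $L$ through $G/H$, so $L/F$ is Galois with group $G/H$ and $[L:F]=|G/H|$, while $[L(x):L(x)^G]=|G|$ by Artin's theorem.

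Step 1 (invariants of $H$). The polynomial $u:=\prod_{\sigma\in H}\sigma(x)\in L[x]$ has degree $|H|$ in $x$, leading coefficient $\prod_{\sigma\in H}a_\sigma\neq 0$, and is $H$‑invariant, so $L(u)\subseteq L(x)^H$; since $[L(x):L(u)]=\deg_x u=|H|=[L(x):L(x)^H]$, we get $L(x)^H=L(u)$, and intersecting with $L[x]$ (using $L[x]\cap L(h)=L[h]$ for non‑constant $h\in L[x]$) gives $L[x]^H=L[u]$. As $H$ is normal, $G$ stabilizes $L[x]^H=L[u]$ with $H$ acting trivially, so $G/H$ acts on $L[u]$ with $(L[u])^{G/H}=L[x]^G$ and on $L(u)$ with $(L(u))^{G/H}=L(x)^G$. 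For $\tau\in G$, the element $\tau(u)$ lies in $L[u]$ and has degree $|H|$ in $x$, hence $\tau(u)=\alpha_\tau u+\beta_\tau$ with $\alpha_\tau\in L^{\times}$, $\beta_\tau\in L$; thus $G/H$ acts on $L(u)$ by affine transformations with coefficients in $L$, faithfully on $L$.

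Step 2 (the faithful case). Claim: if a finite group $K$ acts faithfully on a field $M$ and on $M(t)$ by $\sigma(t)=a_\sigma t+b_\sigma$ with $a_\sigma\in M^{\times}$, $b_\sigma\in M$, then $M(t)^K$ contains a polynomial of degree one in $t$. Indeed $M/M^K$ is Galois with group $K$; from $(\sigma\tau)(t)=\sigma(a_\tau t+b_\tau)$ one reads off $a_{\sigma\tau}=a_\sigma\,\sigma(a_\tau)$, so $\sigma\mapsto a_\sigma$ is a $1$‑cocycle of $K$ on $M^{\times}$, and Hilbert's Theorem~90 provides $\alpha\in M^{\times}$ with $\sigma(\alpha)\,a_\sigma=\alpha$ for all $\sigma$. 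Then $\sigma(\alpha t)-\alpha t=\sigma(\alpha)\,b_\sigma\in M$, and $\sigma\mapsto\sigma(\alpha)\,b_\sigma$ is a $1$‑cocycle of $K$ on the additive group of $M$; since $H^1(K,M^{+})=0$, there is $\beta\in M$ with $\sigma(\beta)-\beta=\sigma(\alpha)\,b_\sigma$, and then $\alpha t-\beta$ is $K$‑invariant. Applying this with $K=G/H$, $M=L$, $t=u$ yields a $G/H$‑invariant $\ell=\alpha u+\beta\in L[u]=L[x]^H$, hence $\ell\in L[x]^G$ with $\deg_x\ell=|H|$. Conversely every non‑constant element of $L[x]^G$ lies in $L[x]^H=L[u]$, so its degree in $x$ is a positive multiple of $|H|$; therefore $m=|H|$, realized by $\ell$ and by every other element of $L[x]^G$ of degree $|H|$.

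Step 3 (conclusion). Let $f\in L[x]^G$ with $\deg f=m=|H|$, so that $L^G(f)\subseteq L(x)^G\subseteq L(x)$. Since $f$ is transcendental over $L$, every $F$‑basis of $L$ remains an $L^G(f)$‑basis of $L(f)$, whence $[L(f):L^G(f)]=[L:F]=|G/H|$; also $[L(x):L(f)]=\deg_x f=|H|$ by the standard degree formula for a rational function. Thus $[L(x):L^G(f)]=|H|\cdot|G/H|=|G|=[L(x):L(x)^G]$, and since $L^G(f)\subseteq L(x)^G$ this forces $L^G(f)=L(x)^G$, proving the theorem. The one genuinely nontrivial input is Step~2: the naive invariant $\prod_{\sigma\in K}\sigma(t)$ has degree $|K|$, not $1$, and it is precisely the multiplicative and additive forms of Hilbert~90 that supply a degree‑one invariant; everything else is bookkeeping around the normal subgroup $H$ together with a dimension count.
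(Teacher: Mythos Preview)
The paper does not prove this theorem; it merely quotes it from \cite[Theorem~3.1]{AHK} and uses it as a black box throughout Sections~3 and~4. So there is no ``paper's own proof'' to compare against.

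That said, your argument is correct and is essentially the standard proof. The three steps are all sound: the computation $L(x)^H=L(u)$ with $u=\prod_{\sigma\in H}\sigma(x)$ and $L[x]^H=L[u]$ is fine (your appeal to $L[x]\cap L(h)=L[h]$ is justified since $1,x,\dots,x^{d-1}$ form a free $L[h]$-basis of $L[x]$ and remain linearly independent over $L(h)$); the reduction of the $G/H$-action on $L(u)$ to affine form and the use of multiplicative and additive Hilbert~90 to produce a degree-one invariant is exactly the right mechanism and correctly identifies $m=|H|$; and the final degree count $[L(x):L^G(f)]=|H|\cdot|G/H|=|G|=[L(x):L(x)^G]$ is valid because $f$ is transcendental over $L$, making $L$ and $F(f)$ linearly disjoint over $F=L^G$. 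One trivial slip: after writing $\sigma(\beta)-\beta=\sigma(\alpha)b_\sigma$ the invariant is $\alpha t-\beta$, so you want $\ell=\alpha u-\beta$, not $\alpha u+\beta$; this does not affect anything.
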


\begin{theorem}[Hoshi, Kitayama and Yamasaki {\cite[5.4]{HKY}}] \label{t2.4}
Let $k$ be a field with $\fn{char}k\ne 2$, $\varepsilon \in \{1,-1\}$ and $a,b\in k\backslash\{0\}$.
Let $G=\langle\sigma,\tau\rangle$ act on $k(x,y,z)$ by $k$-automorphisms defined by
\begin{align*}
\sigma &: x\mapsto a/x,~ y\mapsto a/y,~ z\mapsto \varepsilon z, \\
\tau &: x\mapsto y \mapsto x, ~ z\mapsto b/z.
\end{align*}
Then $k(x,y,z)^G$ is rational over $k$.
\end{theorem}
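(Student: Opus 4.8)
The plan is to realise $G$ as a Klein four-group acting faithfully and then take the fixed field in two stages. Since $\sigma(a/x)=x$ and $\sigma(\varepsilon z)=\varepsilon^2z=z$ we have $\sigma^2=1$, similarly $\tau^2=1$, and both composites $\sigma\tau$ and $\tau\sigma$ send $(x,y,z)$ to $(a/y,\,a/x,\,\varepsilon b/z)$, so $\sigma\tau=\tau\sigma$ and $G=\langle\sigma,\tau\rangle\simeq C_2\times C_2$; as each of $\sigma,\tau,\sigma\tau$ moves $x$, the action on $k(x,y,z)$ is faithful. I would compute $k(x,y,z)^G=\bigl(k(x,y,z)^{\langle\tau\rangle}\bigr)^{\langle\bar\sigma\rangle}$, where $\bar\sigma$ is the image of $\sigma$ in $G/\langle\tau\rangle\simeq C_2$.

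\emph{First stage.} Since $\tau$ interchanges $x,y$ and sends $z\mapsto b/z$, put $u=x+y$, $v=z+b/z$, $w=(x-y)(z-b/z)$; these are $\tau$-invariant and algebraically independent, and from $(x-y)^2=u^2-4xy$, $(z-b/z)^2=v^2-4b$ one gets $w^2=(u^2-4xy)(v^2-4b)$, so $xy\in k(u,v,w)$. Recovering $x,y$ as the roots of $\lambda^2-u\lambda+xy$ and then $z$ from $z+b/z=v$, $z-b/z=w/(x-y)$ shows $[k(x,y,z):k(u,v,w)]=2$, hence $k(x,y,z)^{\langle\tau\rangle}=k(u,v,w)$. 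A short computation gives $\sigma(u)=au/(xy)$, $\sigma(v)=\varepsilon v$ and $\sigma(w)=-\varepsilon a\,w/(xy)$; replacing $w$ by $w_1:=w/u$ makes $\sigma(w_1)=-\varepsilon w_1$, while $xy=\tfrac{u^2}{4}\cdot\tfrac{v^2-4b-w_1^2}{v^2-4b}$ turns the first formula into $\sigma(u)=c/u$ with $c=\dfrac{4a(v^2-4b)}{v^2-4b-w_1^2}$, a $\sigma$-invariant element of $k(v^2,w_1^2)$.

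\emph{Second stage.} I must find $k(u,v,w_1)^{\langle\sigma\rangle}$ for the involution $\sigma\colon v\mapsto\varepsilon v$, $w_1\mapsto-\varepsilon w_1$, $u\mapsto c/u$. On $N:=k(v,w_1)$ this is linear; writing $N=k(p,q)$ with $p$ the generator ($v$ or $w_1$) fixed by $\sigma$ and $q$ the one negated, we have $N^{\langle\sigma\rangle}=k(p,q^2)\ni c$. Setting $S=u+c/u$ and $T=(u-c/u)/q$ — both $\sigma$-invariant, with $S^2-4c=q^2T^2$ — and checking, as in the first stage, that recovering $u$ from $u^2-Su+c=0$ and then $q=(u-c/u)/T$ costs degree exactly $2$, one obtains $k(x,y,z)^G=k(p,q^2,S,T)$ with the single relation $q^2T^2=S^2-4c$, $c=c(p,q^2)$. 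Solving this relation for $p^2$ (for $\varepsilon=1$ one gets $p^2-4b=\dfrac{q^2(S^2-q^2T^2)}{S^2-q^2T^2-16a}$, with a parallel formula for $\varepsilon=-1$) exhibits $k(x,y,z)^G$ as a degree-$2$ extension $k(q^2,S,T)(p)$ of a rational function field, and one verifies that $p^2$ is not a square in $k(q^2,S,T)$, so the extension is genuine.

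The hard part is exactly this last reduction: proving that the quadric threefold with function field $k(q^2,S,T)(p)$ is $k$-rational. Its generic fibre over $k(q^2,T)$ (or over $k(S,T)$) carries no obvious rational point, so this is not a formal consequence of the monomial set-up; the hypotheses $\fn{char}k\ne2$ and $a,b\ne0$ and the very rigid shape of the radicand $p^2$ (a quotient of a quadratic by a linear polynomial in $q^2$, which after translating away its pole takes the form $q'+A/q'+B$ with $A,B\in k(S,T)$) must be used. I would finish by producing a birational model on which the defining equation becomes linear in one of the variables, eliminating that variable, and iterating until a transcendence basis appears; this explicit rationalisation is the only non-routine ingredient. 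Everything preceding it is bookkeeping, the recurring subtlety being merely the index-$2$ checks that guarantee each chosen family of invariants generates the full fixed field rather than an index-$4$ subfield.
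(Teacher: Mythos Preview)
The paper does not prove Theorem~\ref{t2.4}; it is quoted from \cite[5.4]{HKY} as a black box, so there is no in-paper argument to compare against. I can only assess your proposal on its own merits.

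Your two-stage reduction is correct. The identifications $k(x,y,z)^{\langle\tau\rangle}=k(u,v,w_1)$ and $k(u,v,w_1)^{\langle\sigma\rangle}=k(p,q^2,S,T)$ with the relation $q^2T^2=S^2-4c$ are verified by straightforward degree counts, and passing to $k(Q,S,T)(p)$ (with $Q=q^2$, $p^2\in k(Q,S,T)$) is legitimate since $c\in k(v^2,w_1^2)=k(p^2,q^2)$.

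The gap is exactly where you place it, and it is genuine --- but your proposed closure does not work. The relation
\[
(p^2-4b)(S^2-QT^2-16a)=Q(S^2-QT^2)
\]
is already linear in $M:=S^2-QT^2$; eliminating $M$ only yields
\[
S^2-QT^2=\frac{16a(p^2-4b)}{p^2-4b-Q},
\]
which is a conic in $(S,T)$ over $k(p,Q)$ with no visible rational point: $Q$ is not a square in $k(p,Q)$, nor is $M$ or $-M/Q$. Rewriting instead as $S^2\in k(p,Q,T)$ or $p^2\in k(Q,S,T)$ reproduces the same quadratic extension from another side. Your ``iterate until linear'' scheme therefore stalls after one step, and what remains is precisely a conic-bundle rationality statement over a non-closed base --- the substance of the HKY computation the paper is citing. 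Without exhibiting an explicit rational point on one of these conics, or a genuinely different change of variables, the argument is incomplete; the part you label ``the only non-routine ingredient'' is in fact the entire content of the theorem.
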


\begin{theorem}\label{t2.7}
Let $k$ be a field with $\fn{char}k\ne 2$, $\alpha$ be a non-zero
element in $k$. Let $G$ be a finite group acting on
$k(x_1,x_2,x_3)$ by monomial $k$-automorphisms with coefficients
in $\langle \alpha \rangle$. Assume that $\sqrt{-1}$ belongs to
$k$. Then $k(x_1,x_2,x_3)^G$ is rational over $k$.
\end{theorem}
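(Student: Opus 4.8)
The plan is to reduce the statement to the classification of three-dimensional monomial actions, which is available thanks to Yamasaki \cite{Ya} and to Hoshi--Kitayama--Yamasaki \cite{HKY}, and then to check that the hypothesis $\sqrt{-1}\in k$ is enough to annihilate every obstruction that can occur in this restricted setting. First I would normalize: replacing $G$ by the image of the homomorphism $G\to\fn{Aut}_k k(x_1,x_2,x_3)$, we may assume $G$ acts faithfully. A monomial action with coefficients in $\langle\alpha\rangle$ is then the same datum as a homomorphism $\rho\colon G\to GL_3(\bm{Z})$ (the ``exponent part'', recording how $G$ permutes and multiplies the monomials $x_1,x_2,x_3$) together with the coefficients $\lambda_j(\sigma)\in\langle\alpha\rangle$, the latter forming a $1$-cochain for $\rho$ with values in the \emph{cyclic} group $\langle\alpha\rangle\subset k^{\times}$. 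Thus $\rho(G)$ is a finite subgroup of $GL_3(\bm{Z})$, and up to conjugacy there are only finitely many such subgroups; the precise list, together with the rationality analysis of the associated monomial fields, is exactly what is carried out in \cite{Ya,HKY}, and I would organize the proof along that list.

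For the great majority of the conjugacy classes of $\rho(G)$ there is a $G$-stable ``multiplicative flag'': a monomial $y$ in $x_1,x_2,x_3$ whose $G$-orbit spans a sublattice of rank $\le 2$ of the exponent lattice $\bm{Z}^3$, on which the induced action, after at most a tame field extension built from $\langle\alpha\rangle$ and $\sqrt{-1}$, acquires the triangular shape required by Theorem \ref{t2.2} (or that of Theorem \ref{t2.3} applied to one variable at a time). Peeling off one variable at a time then reduces the problem to a one- or two-dimensional monomial problem over a field still containing $\sqrt{-1}$, and monomial actions in dimension $\le 2$ are always rational (see \cite{Ya,HKY} and the references therein). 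This disposes of every type of $\rho(G)$ except a short explicit list of ``primitive'' linear parts — essentially those conjugate into the signed-permutation group $W(B_3)$ with no convenient fixed sublattice — the pivotal representative being the action of Theorem \ref{t2.4}.

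For the remaining cases $k(x_1,x_2,x_3)^G$ need not be rational over an arbitrary $k$, and \cite{Ya,HKY} pin down the obstruction: it is detected by a class in $\fn{Br}(k)$ that, in the present situation, is a sum of quaternion symbols $(\alpha^i,\alpha^j)_k$ assembled from the coefficients (this is precisely where the hypothesis ``coefficients in the single-generator group $\langle\alpha\rangle$'' is used — independent coefficients $a,b$ would instead produce the generically nonsplit symbol $(a,b)_k$, which is the source of Yamasaki's negative examples). Since $(\alpha^i,\alpha^j)_k\equiv ij\,(\alpha,\alpha)_k=ij\,(\alpha,-1)_k$ and $(\alpha,-1)_k$ splits whenever $\sqrt{-1}\in k$ — indeed $-1=\zeta_4^{2}$, so $(\alpha,-1)_k=2\,(\alpha,\zeta_4)_k=0$ — every such obstruction vanishes identically. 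Combined with the explicit rational constructions of \cite{Ya,HKY}, and with Theorem \ref{t2.4} which already settles the pivotal case unconditionally, this yields that $k(x_1,x_2,x_3)^G$ is $k$-rational in all cases.

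The main obstacle is the bookkeeping in the two preceding steps: there are many conjugacy classes of finite subgroups of $GL_3(\bm{Z})$, and for each one must either exhibit the dimension-reducing monomial $y$ explicitly or match the resulting field to one of the cases treated in \cite{Ya,HKY}; the real content lies in tracking exactly how the cyclic coefficient group $\langle\alpha\rangle$ feeds into the obstruction and in confirming that $\sqrt{-1}\in k$ always suffices to kill it.
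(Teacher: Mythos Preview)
Your overall strategy matches the paper's: both defer to the classification in \cite{Ya,HKY} and then verify the finitely many exceptional cases. Two points deserve comment.

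First, a small gap in the reduction. You replace $G$ by its image in $\fn{Aut}_k k(x_1,x_2,x_3)$ to get a faithful action, but that is not enough to make $\rho\colon G\to GL_3(\bm{Z})$ injective: the kernel of $\rho$ can act by diagonal characters $x_i\mapsto c_i x_i$ with $c_i\in\langle\alpha\rangle$. The paper handles this via \cite[Lemma 2.8]{KPr}, which produces a normal subgroup $H$ with $k(x_1,x_2,x_3)^H=k(z_1,z_2,z_3)$ for monomials $z_i$, so that $G/H$ still acts monomially with coefficients in $\langle\alpha\rangle$ and now embeds into $GL_3(\bm{Z})$. Only then can one invoke the tables of \cite{Ya,HKY}. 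You should insert this step.

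Second, and more substantively, your Brauer-symbol reformulation of the obstructions is not obviously what \cite{Ya} actually proves. Yamasaki's criteria (e.g.\ his condition $R(a,b,c)$) are of the shape ``one of $a$, $b$, $ab$ lies in $(k^\times)^2$,'' which is strictly \emph{stronger} than the splitting of the quaternion algebra $(a,b)_k$. So showing $(\alpha^i,\alpha^j)_k=0$ via $(\alpha,\alpha)_k=(\alpha,-1)_k=0$ does not, on its face, verify Yamasaki's hypotheses. The paper instead checks those hypotheses directly by the pigeonhole observation that if $a,b\in\langle\alpha\rangle$ then automatically one of $a,b,ab$ lies in $\langle\alpha^2\rangle\subset (k^\times)^2$. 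In particular the paper's argument shows that the eight cases of \cite{Ya} need only the cyclic-coefficient hypothesis, and $\sqrt{-1}\in k$ is invoked solely for the ninth case \cite[Theorem 1.7]{HKY}. Your route through $\fn{Br}(k)$ is suggestive but would need an extra argument that rationality (not just stable rationality) in each exceptional case is genuinely controlled by a Brauer class; absent that, the elementary pigeonhole verification is both shorter and safer.
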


\begin{proof}
Recall the definition of monomial actions with coefficients in
$\langle \alpha \rangle$ in the last paragraph of Section 1.

For a monomial action of $G$ on $k(x_1,x_2,x_3)$, define the group
homomorphism $\rho_{\underline{x}}:G\to GL_3(\bm{Z})$ by
$\rho_{\underline{x}}(\sigma)=(m_{ij})_{1\le i,j\le 3}$ if
$\sigma\cdot x_j$ is given by $\sigma\cdot
x_j=\lambda_j(\sigma)\cdot \prod_{1\le i\le 3}x_i^{m_{ij}}$ where
$m_{ij}\in\bm{Z}$ and $\lambda_j(\sigma)\in k\backslash \{0\}$
(see \cite[Definition 2.7]{KPr}).

By \cite[Lemma 2.8]{KPr}, we can find a normal subgroup $H$ such
that {\rm (i)} $k(x_1,x_2,x_3)^H$ $=k(z_1,z_2,z_3)$ with each one
of $z_1,z_2,z_3$ in the form
$x_1^{\lambda_1}x_2^{\lambda_2}x_3^{\lambda_3}$ (where
$\lambda_1,\lambda_2,\lambda_3 \in \bm{Z}$), {\rm (ii)} $G/H$ acts
on $k(z_1,z_2,z_3)$ by monomial $k$-automorphisms, and {\rm (iii)}
$\rho_{\underline{z}}: G/H \to GL_3(\bm{Z})$ is injective.

Clearly $G/H$ acts on $k(z_1,z_2,z_3)$ by monomial actions with
coefficients in $\langle \alpha \rangle$ also. Thus we may apply
the results of \cite{Ya,HKY} to $k(z_1,z_2,z_3)^{G/H}$.

It is known that the fixed field of any $3$-dimensional monomial
action on $k(z_1,z_2,z_3)$ is rational except possibly for nine
cases \cite[Theorem 1.6]{HKY}.

The first eight ``exceptional" cases are studied in \cite{Ya}.
Because of our assumptions that $\sqrt{-1}$ and the monomial
actions have coefficients in $\langle \alpha \rangle$, the
rationality criteria found by Yamasaki are satisfied. For example,
in \cite[Lemma 2]{Ya}, $R(a,b,c)$ is affirmative if $a,b,c \in
\langle \alpha \rangle$; for, if both $a$ and $b$ are in $\langle
\alpha \rangle \setminus \langle \alpha^2 \rangle$, then $ab \in
\langle \alpha^2 \rangle$.

The last ``exceptional" case is studied in \cite[Theorem
1.7]{HKY}, which is rescued by the assumption $\sqrt{-1} \in k$.
\end{proof}

\begin{theorem}\label{t2.8}
Let $k$ be a field with $\fn{char}k\ne 2$, $\zeta_m$ be a
primitive $m$-th root of unity belonging to $k$. Let $G$ be a
finite group acting on $k(x_1,x_2,x_3)$ by monomial
$k$-automorphisms with coefficients in $\langle \zeta_m \rangle$.
Then $k(x_1,x_2,x_3)^G$ is rational over $k$ except for the
following situation : there is a normal subgroup $H$ of $G$ such
that $G/H = <\tau>$ is cyclic of order $4$,
$k(x_1,x_2,x_3)^H=k(z_1,z_2,z_3)$, each one of $z_1,z_2,z_3$ is of
the form $x_1^{\lambda_1}x_2^{\lambda_2}x_3^{\lambda_3}${\rm
(}where $\lambda_1,\lambda_2,\lambda_3 \in \bm{Z}${\rm )}, $-1 \in
\langle \zeta_m \rangle$ and $\tau : z_1 \mapsto z_2 \mapsto z_3
\mapsto -1/(z_1z_2z_3)$.

For the exceptional case, $k(x_1,x_2,x_3)^G$ is rational over $k$
if and only if at least one of $-1, 2, -2$ belongs to
$(k^{\times})^2$.

\end{theorem}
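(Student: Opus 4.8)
The plan is to run the argument of the proof of Theorem \ref{t2.7}, the only new ingredient being a closer analysis of the single genuinely exceptional three-dimensional monomial action, now that $\sqrt{-1}\in k$ is not available. First I would reduce to a faithful action exactly as in Theorem \ref{t2.7}: by \cite[Lemma 2.8]{KPr} there is a normal subgroup $H\le G$ with $k(x_1,x_2,x_3)^H=k(z_1,z_2,z_3)$, each $z_i=x_1^{\lambda_1}x_2^{\lambda_2}x_3^{\lambda_3}$ a Laurent monomial in the $x_j$, such that $G/H$ acts on $k(z_1,z_2,z_3)$ by monomial $k$-automorphisms and $\rho_{\underline z}:G/H\to GL_3(\bm Z)$ is injective; since the $z_i$ are monomials in the $x_j$, the coefficients of the $G/H$-action again lie in $\langle\zeta_m\rangle$. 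Because $k(x_1,x_2,x_3)^G=k(z_1,z_2,z_3)^{G/H}$, I may replace $G$ by $G/H$ and assume the action faithful. By \cite[Theorem 1.6]{HKY} the fixed field is then $k$-rational except possibly in nine cases, and it remains to inspect those nine under the restriction on the coefficients.

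For the first eight cases, those treated by Yamasaki in \cite{Ya}, I would argue as in the proof of Theorem \ref{t2.7}: since $\langle\zeta_m\rangle$ is cyclic, $[\langle\zeta_m\rangle:\langle\zeta_m^2\rangle]\le 2$, so among any three of the relevant coefficients two lie in one coset modulo $\langle\zeta_m^2\rangle$, whence their product lies in $\langle\zeta_m^2\rangle\subseteq(k^\times)^2$; this is exactly what is needed for Yamasaki's rationality criteria (such as the predicate $R(a,b,c)$ of \cite[Lemma 2]{Ya}) to hold, and none of this uses $\sqrt{-1}\in k$, which in Theorem \ref{t2.7} entered only through the ninth case. (If $m$ is odd, $\langle\zeta_m\rangle$ has odd order and every element is a square in $\langle\zeta_m\rangle\subseteq k^\times$, so there is nothing to check.) Hence all eight of these cases give $k$-rational fixed fields.

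The hard part will be the ninth case, \cite[Theorem 1.7]{HKY}, where $G=\langle\tau\rangle\cong C_4$ and, after renaming the $z_i$, $\tau:z_1\mapsto z_2\mapsto z_3\mapsto c/(z_1z_2z_3)$ for some $c\in\langle\zeta_m\rangle$. If $-1\in(k^\times)^2$ then $\sqrt{-1}\in k$ and Theorem \ref{t2.7} already gives $k$-rationality, so assume $-1\notin(k^\times)^2$. The image of $\langle\zeta_m\rangle$ in $k^\times/(k^\times)^2$ is cyclic of order at most $2$; by \cite[Theorem 1.7]{HKY} the fixed field is $k$-rational when $c\in(k^\times)^2$, while if $c\notin(k^\times)^2$ then $m$ must be even (otherwise $\langle\zeta_m\rangle$ has odd order and $c$ is a square), so $-1=\zeta_m^{m/2}\in\langle\zeta_m\rangle$, and $c$ and $-1$ have the same non-trivial image in $k^\times/(k^\times)^2$; thus $-c\in(k^\times)^2$ and, by \cite[Theorem 1.7]{HKY}, one is reduced to $\tau:z_1\mapsto z_2\mapsto z_3\mapsto -1/(z_1z_2z_3)$ with $-1\in\langle\zeta_m\rangle$. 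Translated back to $G$ and the subgroup $H$, this is precisely the exceptional situation in the statement, and for it \cite[Theorem 1.7]{HKY} gives that $k(z_1,z_2,z_3)^G$ is $k$-rational if and only if at least one of $-1,2,-2$ belongs to $(k^\times)^2$. The main obstacle is therefore this last case: normalizing the twist $c$ by means of the cyclic structure of $\langle\zeta_m\rangle$ so as to isolate the one configuration that is not automatically rational, and extracting from \cite[Theorem 1.7]{HKY} the sharp arithmetic condition on $k$; the reduction step and the eight Yamasaki cases are routine, being parallel to Theorem \ref{t2.7}.
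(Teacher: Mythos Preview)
Your proposal is correct and follows the same skeleton as the paper: reduce via \cite[Lemma 2.8]{KPr} to a faithful action, invoke \cite[Theorem 1.6]{HKY} for the nine possible exceptions, dispose of the eight Yamasaki cases using the cyclic structure of $\langle\zeta_m\rangle$, and isolate the $C_4$-action $\tau:z_1\mapsto z_2\mapsto z_3\mapsto c/(z_1z_2z_3)$ as the only genuine obstruction.

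The two places where the paper proceeds differently are worth noting. First, in the ninth case the paper does not argue via square classes and \cite[Theorem 1.7]{HKY}; instead, after observing that $\sqrt{-1}\notin k$ forces $4\nmid m$, it writes $c\in\langle\zeta_m\rangle$ explicitly as $d^4$ (if $m$ is odd) or as $\pm a^4$ (if $m=2s$ with $s$ odd), and substitutes $w_j=z_j/d$ (resp.\ $z_j/a$) to land exactly on $c=1$ (purely monomial, hence rational by \cite{HK2}) or on $c=-1$. This is more elementary than appealing to the dependence of the criterion in \cite[Theorem 1.7]{HKY} on the square class of $c$. Second, for the ``only if'' direction of the exceptional case, the paper does not simply cite the literature: it gives a self-contained argument showing that $k(C_8)$ is rational over $k(w_1,w_2,w_3)^{\langle\tau\rangle}$ (by an explicit change of variables and Theorems \ref{t2.2}, \ref{t2.3}), and then uses retract rationality together with Saltman's theorems to conclude that $k(\zeta_8)/k$ is cyclic. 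Your approach of citing \cite[Theorem 1.7]{HKY} for the full iff is shorter; the paper's argument is more transparent and ties the obstruction visibly to Noether's problem for $C_8$.
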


\begin{proof}
The proof is very similar to that of Theorem \ref{t2.7}. By
\cite[Lemma 2.8]{KPr}, find the normal subgroup $H$ of $G$ such
that {\rm (i)} $k(x_1,x_2,x_3)^H=k(z_1,z_2,z_3)$ with each one of
$z_1,z_2,z_3$ in the form
$x_1^{\lambda_1}x_2^{\lambda_2}x_3^{\lambda_3}$ (where
$\lambda_1,\lambda_2,\lambda_3 \in \bm{Z}$), {\rm (ii)} $G/H$ acts
on $k(z_1,z_2,z_3)$ by monomial $k$-automorphisms, and {\rm (iii)}
$\rho_{\underline{z}}: G/H \to GL_3(\bm{Z})$ is injective. Thus we
may apply the results of \cite{Ya,HKY} to $k(z_1,z_2,z_3)^{G/H}$.

Clearly $G/H$ acts on $k(z_1,z_2,z_3)$ by monomial actions with
coefficients in $\langle \zeta_m \rangle$ also.

By \cite[Theorem 1.6]{HKY} it remains to check the ``nine cases''
as in the proof of Theorem \ref{t2.7}. Note that, if $m$ is an odd
integer, then $-\zeta_m$ is a primitive $2m$-th root of unity in
$k$ and $\zeta_m \in (k^{\times})^2$.

We may also assume $\sqrt{-1} \notin k$ and thus $4$ doesn't
divide $m$; otherwise, apply Theorem \ref{t2.7} and the proof is
finished. From now on, assume either $m$ is odd or $m=2s$ for some
odd integer $s$.

Apply the results of \cite{Ya,HKY}. All the cases are rational
except the group $G_{4,2,2}$ in Section 4 of \cite{Ya}. This
group, $G/H$, is a cyclic group of order $4$ and the group action
is given by $\tau : z_1 \mapsto z_2 \mapsto z_3 \mapsto
c/(z_1z_2z_3)$ for some $c \in \langle \zeta_m \rangle$.

If $m$ is odd and $c \in \langle \zeta_m \rangle$, we may write
$c=d^4$ for some $d \in \langle \zeta_m \rangle$ (because $gcd
\{m, 4 \} =1$). Define $w_j=z_j/d$ for $1 \le j \le 3$. We find
that $\tau : w_1 \mapsto w_2 \mapsto w_3 \mapsto 1/(w_1w_2w_3)$.
We get a purely monomial action and
$k(x_1,x_2,x_3)^G=k(w_1,w_2,w_3)^{<\tau>}$ is rational by
\cite{HK2}.

Now consider the case where $m=2s$ for some odd integer $s$.

Since $c \in \langle \zeta_{2s} \rangle \simeq C_2 \times C_s$, we
may write $c=\varepsilon b$ where $\varepsilon \in \{1,-1 \}$ and
$b^s=1$. As $b^s=1$, we may write $b=a^4$ for some $a \in
k^{\times}$. Define $w_j=z_j/a$ for $1 \le j \le 3$. We find that
$\tau : w_1 \mapsto w_2 \mapsto w_3 \mapsto
\varepsilon/(w_1w_2w_3)$.

If $\varepsilon = 1$, we get a purely monomial action and
$k(w_1,w_2,w_3)^{<\tau>}$ is rational by \cite{HK2}. Thus only the
case $\varepsilon = -1$ remains. Note that $-1 \in \langle \zeta_m
\rangle$ in the present situation.

\bigskip
Now we will solve the ``exceptional" case whether
$k(x_1,x_2,x_3)^G=k(w_1,w_2,w_3)^{<\tau>}$ is rational over $k$
with $\tau : w_1 \mapsto w_2 \mapsto w_3 \mapsto -1/(w_1w_2w_3)$.

Assume that $k(w_1,w_2,w_3)^{<\tau>}$ is rational. The desired
conclusion ``at least one of $-1, 2, -2$ belongs to
$(k^{\times})^2$", which is equivalent to ``$k(\zeta_8)$ is cyclic
over $k$", is valid when $k$ is a finite field. Hence we may
assume that $k$ is an infinite field.

Consider the fixed field $k(C_8)$ where $k(C_8)=k(y_i: 1 \le i \le
8)^{<\tau>}$ with $\tau : y_1 \mapsto y_2 \mapsto \cdots \mapsto
y_8 \mapsto y_1$. We will show that $k(C_8)$ is rational over
$k(w_1,w_2,w_3)^{<\tau>}$. In fact, define $u_i=y_i + y_{i+4}$ and
$v_i=y_i - y_{i+4}$ for $1 \le i \le 4$. Then $k(y_i: 1 \le i \le
8)=k(u_i,v_i : 1 \le i \le 4)$ and $\tau : u_1 \mapsto u_2 \mapsto
u_3 \mapsto u_4 \mapsto u_1$, $\tau : v_1 \mapsto v_2 \mapsto v_3
\mapsto v_4 \mapsto -v_1$. By Theorem \ref{t2.2}, $k(u_i,v_i : 1
\le i \le 4)^{<\tau>}=k(v_i : 1 \le i \le
4)^{<\tau>}(Y_1,Y_2,Y_3,Y_4)$ for some $Y_j$ where $\tau
(Y_j)=Y_j$ with $1 \le j \le 4$. Define $t_i =v_i/v_{i+1}$ for $1
\le i \le 3$. Then $\tau : t_1 \mapsto t_2 \mapsto t_3 \mapsto
-1/(t_1t_2t_3)$ (the same as the action on $w_1,w_2,w_3$!) and
$k(v_i : 1 \le i \le 4)^{<\tau>}=k(t_1,t_2,t_3)^{<\tau>}(f)$ for
some $f$ with $\tau(f)=f$ by Theorem \ref{t2.3}. Hence $k(C_8)$ is
rational over $k(w_1,w_2,w_3)^{<\tau>}$.

Since $k(w_1,w_2,w_3)^{<\tau>}$ is $k$-rational by assumption, it
is retract $k$-rational \cite[Sa2, Ka7]{Sa} (it is the occasion of
retract rationality that we require the base field $k$ is
infinite). Because the retract rationality is preserved among
stably isomorphic fields \cite[Proposition 3.6; Ka7, Lemma
3.4]{Sa2}, we find that $k(C_8)$ is also retract $k$-rational. By
\cite[Theorem 4.12; Ka7, Theorem 2.9]{Sa2}, we conclude that
$k(\zeta_8)$ is cyclic over $k$. Done.

\bigskip
Now assume that at least one of $-1, 2, -2$ belongs to
$(k^{\times})^2$. We will prove that $k(w_1,w_2,w_3)^{<\tau>}$ is
rational over $k$.

By \cite[Theorem 1.8; Ya, Theorem 3]{Ka7},
$k(w_1,w_2,w_3)^{<\tau>}$ is rational if and only if $-1 \in
(k^{\times})^2$ or $1 \in 4(k^{\times})^4$. The latter condition
is nothing but the condition ``at least one of $-1, 2, -2$ belongs
to $(k^{\times})^2$". Hence the result.
\end{proof}

\begin{theorem}[Hajja \cite{Ha}] \label{t2.5}
Let $G$ be a finite group acting on the rational function field
$k(x,y)$ be monomial $k$-automorphisms. Then $k(x,y)^G$ is
rational over $k$.
\end{theorem}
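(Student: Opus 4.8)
The plan is to reduce an arbitrary monomial action to that of a finite subgroup of $GL_2(\bm{Z})$ carrying coefficients in $k^{\times}$, and then to settle the finitely many resulting types by explicit rationality computations. Write $\sigma\cdot x=\lambda_1(\sigma)\,x^{a_\sigma}y^{b_\sigma}$ and $\sigma\cdot y=\lambda_2(\sigma)\,x^{c_\sigma}y^{d_\sigma}$, and let $\rho\colon G\to GL_2(\bm{Z})$ be the resulting ``linear part''. Replacing $G$ by its image in $\mathrm{Aut}\,k(x,y)$ we may assume $G$ acts faithfully. The normal subgroup $N=\ker\rho$ then acts diagonally, $\sigma\cdot x=\lambda_1(\sigma)x$, $\sigma\cdot y=\lambda_2(\sigma)y$ for $\sigma\in N$; since every finite subgroup of $k^{\times}$ is cyclic, $N$ is generated by at most two elements. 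The sublattice $\Lambda\subseteq\bm{Z}^2$ of exponent vectors of $N$-invariant monomials is $G$-stable (because $N\trianglelefteq G$), and $\bm{Z}^2/\Lambda$ embeds into $\mathrm{Hom}(N,k^{\times})$, so $[\bm{Z}^2:\Lambda]\le|N|$; on the other hand, choosing monomials $u,v$ whose exponent vectors form a $\bm{Z}$-basis of $\Lambda$ gives $[k(x,y):k(u,v)]=[\bm{Z}^2:\Lambda]$ together with $k(u,v)\subseteq k(x,y)^N$, whence $[\bm{Z}^2:\Lambda]\ge[k(x,y):k(x,y)^N]=|N|$. Hence $k(x,y)^N=k(u,v)$, on which $G/N$ acts again by monomial automorphisms and now with injective linear part. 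It therefore suffices to treat the case in which $G$ is conjugate to a finite subgroup of $GL_2(\bm{Z})$, of which there are finitely many up to conjugacy; the isomorphism types that occur are $1$, $C_2$, $C_3$, $C_4$, $C_6$, $C_2\times C_2$, $S_3$, the dihedral group of order $8$, and the dihedral group of order $12$.

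For each of these I would fix explicit generators, write out the general monomial action with unknown coefficients $\lambda_j(\sigma)\in k^{\times}$, and produce a transcendence basis of $k(x,y)^G$ over $k$. The prime-order cases $C_2$ and $C_3$ are treated by hand using classical invariants (for $C_2$, according to the linear part, one of $k(t)^{\langle t\mapsto a/t\rangle}=k(t+a/t)$, the swap case, and similar small generating sets; for $C_3$, the norm $x\cdot\sigma(x)\cdot\sigma^2(x)$ and its companions). Each larger group is brought down to a smaller one by passing to the fixed field of a well-chosen normal subgroup $H$ and analyzing the induced $G/H$-action: $C_4$ via its subgroup $\langle\sigma^2\rangle$, with $\rho(\sigma^2)=-I$; the order-$8$ dihedral group via its cyclic subgroup of order $4$; $C_6$ via its subgroups of orders $2$ and $3$; $S_3$ and $C_2\times C_2$ via $C_3$ and $C_2$; and the order-$12$ dihedral group via $C_6$ or $S_3$. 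When the fixed field of $H$ is generated not by monomials but by elements such as $x+a/x$, the induced $G/H$-action becomes fractional-linear, hence affine after a change of variable, in one coordinate; Theorem \ref{t2.2} and Theorem \ref{t2.3} then let one replace that coordinate by a fixed one, lowering the number of variables until rationality is manifest.

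The heart of the proof — and the point where two variables is genuinely special — is the cyclic group of order $4$ and the dihedral group of order $8$ with arbitrary coefficients. There the invariants of the index-two subgroup are not monomials, so one cannot simply iterate the reduction step; instead one must carry the surviving coefficient through the passage to the quotient and verify case by case that it can be absorbed into a further change of variable. This is precisely what fails in three or more variables — compare the exceptional action $\tau\colon z_1\mapsto z_2\mapsto z_3\mapsto -1/(z_1z_2z_3)$ in Theorem \ref{t2.8}, which forces an arithmetic condition on $k$ — whereas for $k(x,y)$ there is always enough room to complete the change of variable unconditionally. Once every type on the list has been disposed of, $k(x,y)^G$ is rational over $k$.
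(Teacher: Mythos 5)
The paper offers no proof of this statement; it is quoted from Hajja's paper [Ha], and your outline is in fact essentially the strategy of that source: split off the kernel $N$ of the linear part $\rho\colon G\to GL_2(\bm{Z})$, pass to $k(x,y)^N$, and then run a case analysis over the finitely many conjugacy classes of finite subgroups of $GL_2(\bm{Z})$. Your reduction step is correct and completely argued --- it is exactly the two-variable instance of the lemma of Kang--Prokhorov [KPr, Lemma 2.8] that the paper invokes in the proofs of Theorems \ref{t2.7} and \ref{t2.8} --- and your list of the occurring isomorphism types ($1$, $C_2$, $C_3$, $C_4$, $C_6$, $C_2\times C_2$, $S_3$, $D_4$, $D_6$) is right.

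The gap is that the case analysis is where the entire content of the theorem lives, and for the hard cases you assert the conclusion rather than prove it. Take your own identified crux, the order-$4$ action $\sigma\colon x\mapsto y\mapsto c/x$ with $c\in k^{\times}$ arbitrary. Here $k(x,y)^{\langle\sigma^2\rangle}=k(u,v,w)$ with $u=x+c/x$, $v=y+c/y$, $w=(x-c/x)(y-c/y)$, subject to $w^2=(u^2-4c)(v^2-4c)$, and $\sigma$ acts by $u\leftrightarrow v$, $w\mapsto -w$. Before one can even speak of ``absorbing the coefficient'', one must show that this degree-two extension of $k(u,v)$ is itself $k$-rational --- which it is, but only because the conic $v^2-(u^2-4c)t^2=4c$ over $k(u)$ (where $w=(u^2-4c)t$) happens to have the rational point $(v,t)=(u,1)$ --- and then one must still find coordinates in which the residual involution is linearizable and compute its fixed field. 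Your sentence ``for $k(x,y)$ there is always enough room to complete the change of variable unconditionally'' is precisely the statement to be proved, not a reason; as your own comparison with the exceptional three-variable action $z_1\mapsto z_2\mapsto z_3\mapsto -1/(z_1z_2z_3)$ of Theorem \ref{t2.8} shows, such claims can fail and their truth in the two-variable case is a theorem, not a formality. The same remark applies to $D_4$ and to the hexagonal-lattice groups $C_3$, $C_6$, $S_3$, $D_6$ with nontrivial coefficients. So the skeleton is sound and matches the cited proof, but the proposal as written establishes the reduction only; the verifications that constitute Hajja's theorem are still owed.
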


\begin{theorem}[Kang and Plans {\cite[Theorem 1.3]{KP}}] \label{t2.6}
Let $k$ be any field,
$G_1$ and $G_2$ two finite groups.
If both $k(G_1)$ and $k(G_2)$ are rational over $k$,
then so is $k(G_1\times G_2)$ over $k$.
\end{theorem}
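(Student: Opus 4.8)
The plan is to realize the field concretely and to peel off the two factors one at a time, combining the ``no-name'' lemma (Theorem \ref{t2.2}) with the two rationality hypotheses. Write $k(G_1\times G_2)=k\bigl(x_{(g,h)}:g\in G_1,\ h\in G_2\bigr)^{G_1\times G_2}$ with $(g_0,h_0)\cdot x_{(g,h)}=x_{(g_0g,\,h_0h)}$, so that $G_1\times\{1\}$ and $\{1\}\times G_2$ act by commuting permutations of the coordinates. Since the statement is symmetric in $G_1$ and $G_2$, I may assume $\fn{char}k\nmid|G_2|$ (the case where $\fn{char}k$ divides both $|G_1|$ and $|G_2|$ is commented on at the end), and I first pass to the invariants under $G_1:=G_1\times\{1\}$.

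For the first step, put $u_g=\sum_{h\in G_2}x_{(g,h)}$ for $g\in G_1$. These are algebraically independent, $\{u_g\}$ carries the regular permutation representation of $G_1$, so $L:=k(u_g:g\in G_1)$ is a faithful $G_1$-field with $L^{G_1}=k(G_1)$, and $\{1\}\times G_2$ fixes each $u_g$. Eliminating $x_{(g,1)}=u_g-\sum_{h\ne1}x_{(g,h)}$ shows $k(x_{(g,h)})=L\bigl(x_{(g,h)}:g\in G_1,\ h\in G_2\setminus\{1\}\bigr)$ with $G_1$ acting $k$-linearly, by permutations, on the displayed variables; so Theorem \ref{t2.2} gives $k(x_{(g,h)})^{G_1}=k(G_1)(z_1,\dots,z_n)$ with $n=|G_1|\,(|G_2|-1)$ and each $z_i$ fixed by $G_1$. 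As $\{1\}\times G_2$ commutes with $G_1$ and fixes $L$ pointwise, it preserves the $k(G_1)$-lattice spanned by $z_1,\dots,z_n$ together with $1$; splitting off the trivial summand $k(G_1)\cdot1$ by Maschke's theorem (this is where $\fn{char}k\nmid|G_2|$ enters), one may arrange that $G_2$ fixes $k(G_1)$ pointwise and acts on $z_1,\dots,z_n$ by $|G_1|$ copies of the integral $G_2$-lattice $M_0:=\bm{Z}[G_2]/\bigl(\bm{Z}\sum_{h\in G_2}h\bigr)$.

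For the second step, both hypotheses come in. Since $G_2$ acts on the $z_i$ by integer matrices and trivially on $k(G_1)$, and $k(G_1)=k(t_1,\dots,t_{|G_1|})$ is rational over $k$, we get $k(G_1\times G_2)=k(z_1,\dots,z_n)^{G_2}(t_1,\dots,t_{|G_1|})$. Relabel the $t_i$ as $|G_1|$ fresh variables carrying the trivial $G_2$-action; then $k(G_1\times G_2)$ is $k$-isomorphic to the fixed field of $G_2$ acting on $n+|G_1|$ variables by $\bigl(M_0\otimes_{\bm Z}k\bigr)^{\oplus|G_1|}\oplus(\text{trivial})^{\oplus|G_1|}$. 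Because $\fn{char}k\nmid|G_2|$, one has $\bigl(M_0\otimes_{\bm Z}k\bigr)\oplus k\cong k[G_2]$ as $k[G_2]$-modules, so this module is isomorphic to $k[G_2]^{\oplus|G_1|}$; hence $k(G_1\times G_2)$ is $k$-isomorphic to the fixed field of $G_2$ acting by $|G_1|$ copies of its regular permutation representation. There, one copy already yields $k(G_2)$, and the remaining $|G_1|-1$ copies are transformed linearly over that copy, so Theorem \ref{t2.2} absorbs them as $(|G_1|-1)|G_2|$ transcendentals. Therefore $k(G_1\times G_2)\cong_k k(G_2)(\text{transcendentals})$, which is rational over $k$ since $k(G_2)$ is; chaining these identifications proves the theorem in this case.

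The hard part is the equivariant sharpening of Theorem \ref{t2.2} in the first step: one must check that the generators $z_1,\dots,z_n$ can be chosen compatibly with the residual $\{1\}\times G_2$-action and that this action is precisely $M_0^{\oplus|G_1|}$. It is exactly this bookkeeping that forces working with the full regular representation of $G_1\times G_2$ rather than with a smaller faithful representation --- the latter only gives stable rationality. When $\fn{char}k=p$ divides both $|G_1|$ and $|G_2|$, the splitting $\bigl(M_0\otimes_{\bm Z}k\bigr)\oplus k\cong k[G_2]$ fails, the argument above degenerates to stable rationality, and a separate treatment is needed (for instance one grafts on Kuniyoshi-type rationality for the $p$-part); this modular case is the remaining delicate point.
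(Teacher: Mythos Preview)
The paper does not prove this theorem; it is quoted from Kang--Plans \cite{KP} and used only as a black box, so there is no in-paper proof to compare against.

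Your strategy is natural, but the argument has a genuine gap exactly where you yourself flag ``the hard part.'' After Theorem \ref{t2.2} produces $z_1,\ldots,z_n$, one can indeed check that $G_2$ acts on them by $k(G_1)$-\emph{affine} maps (since $G_2$ fixes $L$ pointwise, acts $L$-affinely on the $x_{(g,h)}$ with $h\neq 1$, and commutes with $G_1$), and after Maschke by $k(G_1)$-\emph{linear} maps. But your further claim---that this $k(G_1)[G_2]$-module is $M_0^{\oplus|G_1|}\otimes_{\bm Z} k(G_1)$, i.e.\ that the matrices can be taken with integer entries---is a Galois-descent assertion that you do not justify: you only know the module becomes $(M_0)^{\oplus|G_1|}\otimes L$ after extending scalars to $L$, and identifying the $k(G_1)$-form requires either an explicit $G_2$-equivariant choice of the $z_i$ inside the proof of Theorem \ref{t2.2} or a cohomological vanishing argument, neither of which you supply. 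Without this, your second step collapses: if the $G_2$-action on the $z_i$ has coefficients genuinely in $k(G_1)$ rather than in $k$, then $G_2$ does not act on $k(z_1,\ldots,z_n)$ at all, and the line ``$k(G_1\times G_2)=k(z_1,\dots,z_n)^{G_2}(t_1,\dots,t_{|G_1|})$'' has no meaning. The modular case you set aside is also a real omission, since the theorem is stated for arbitrary $k$. You should consult \cite{KP} directly for how these issues are handled.
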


\section{The proof of Theorem \ref{t1.4}}

We will prove Theorem \ref{t1.4} in this section.

By the discussion of Section 1, it suffices to consider those
groups $G$ in Theorem \ref{t2.1} (with $n\ge 5$) under the
assumptions of \eqref{eq1.1}, i.e. $\fn{char}k\ne 2$ and
$\zeta_{2^{n-3}} \in k$. These assumptions will remain in force
throughout this section.

Write $\zeta=\zeta_{2^{n-3}}\in k$ from now on.
Since $n\ge 5$, $\zeta^{2^{n-5}} \in k$ and $\zeta^{2^{n-5}}$ is a primitive 4-th root of unity.
We write $\zeta^{2^{n-5}}=\sqrt{-1}$.

\bigskip

\begin{Case}{1}  $G=G_1$ where $G_1$ is the group in Theorem \ref{t2.1}.  \end{Case}

$G$ is a metacyclic group.
But we cannot apply Theorem \ref{t1.5} because $\zeta_{2^{n-2}}\notin k$.

Let $V$ be a $k$-vector space whose dual space $V^*$ is defined as $V^*=\bigoplus_{g\in G} k\cdot x(g)$
and $h\cdot x(g)=x(hg)$ for any $g,h\in G$.
Note that $k(G)=k(x(g):g\in G)^G=k(V)^G$.
We will find a faithful $G$-subspace $W$ of $V^*$.

Note that $\langle \sigma^2,\tau\rangle$ is an abelian subgroup of $G$ and $\fn{ord}(\sigma^2)=2^{n-3}$.
Define
\begin{equation} \label{eq3.1}
\begin{aligned}
X &= \sum_{0\le i\le 2^{n-3}-1} \zeta^{-i} \left[ x(\sigma^{2i})+x(\sigma^{2i}\tau)
+ x(\sigma^{2i}\tau^2)+x(\sigma^{2i}\tau^3)\right], \\
Y &= \sum_{0\le i\le 2^{n-3}-1 \atop 0\le j\le 3} \left(\sqrt{-1}\right)^{-j} x(\sigma^{2i}\tau^j).
\end{aligned}
\end{equation}

We find that
\begin{align*}
\sigma^2 &: X \mapsto \zeta X,~ Y\mapsto Y, \\
\tau &: X\mapsto X, ~ Y\mapsto \sqrt{-1} Y.
\end{align*}

Define $x_0=X$, $x_1=\sigma X$, $y_0=Y$, $y_1=\sigma Y$.
The actions of $\sigma$, $\tau$ are given by
\begin{align*}
\sigma &: x_0\mapsto x_1\mapsto \zeta x_0,~ y_0\mapsto y_1\mapsto y_0, \\
\tau &: x_0\mapsto x_0,~ x_1\mapsto -x_1,~ y_0\mapsto\sqrt{-1} y_0,~ y_1\mapsto \sqrt{-1} y_1.
\end{align*}

It follows that $W=k\cdot x_0 \oplus k\cdot x_1\oplus k\cdot y_0\oplus k\cdot y_1$ is a faithful $G$-subspace of $V^*$.
By Theorem \ref{t2.2}, $k(G)$ is rational over $k(x_0,x_1,y_0,y_1)^G$.
It remains to show that $k(x_0,x_1,y_0,y_1)^{\langle \sigma,\tau\rangle}$ is rational over $k$.

Define $z_1=x_1/x_0$, $z_2=y_1/y_0$.
Then $k(x_0,x_1,y_0,y_1)=k(z_1,z_2,x_0,y_0)$ and
\begin{align*}
\sigma &: x_0 \mapsto z_1x_0,~ y_0\mapsto z_2y_0,~z_1\mapsto \zeta/z_1,~ z_2\mapsto 1/z_2, \\
\tau &: x_0\mapsto x_0, ~ y_0\mapsto \sqrt{-1} y_0,~ z_1\mapsto
-z_1,~ z_2\mapsto z_2.
\end{align*}

By Theorem \ref{t2.3},
$k(z_1,z_2,x_0,y_0)^{\langle \sigma,\tau\rangle}=k(z_1,z_2)^{\langle \sigma,\tau\rangle}(z_3,z_4)$ for some
$z_3$, $z_4$ with $\sigma(z_j)=\tau(z_j)=z_j$ for $j=3,4$.

The actions of $\sigma$ and $\tau$ on $z_1$, $z_2$ are monomial automorphisms.
By Theorem \ref{t2.5}, $k(z_1,z_2)^{\langle \sigma,\tau\rangle}$ is rational.
Thus $k(x_0,x_1,y_0,y_1)^{\langle \sigma,\tau\rangle}$ is also rational over $k$.

\bigskip

\begin{Case}{2}  $G=G_2$, $G_3$, $G_{10}$ or $G_{11}$.  \end{Case}

These four groups are direct products of subgroups $\langle
\sigma,\tau\rangle$ and $\langle\lambda\rangle$. We may apply
Theorem \ref{t1.6} to study $k(G)$ since $H:=\langle
\sigma,\tau\rangle$ is a group of order $2^{n-1}$,
$\fn{ord}(\sigma)=2^{n-2}$ and $\zeta_{2^{n-3}}\in k$. By Theorem
\ref{t1.3} we find that $k(H)$ is rational over $k$.

\bigskip

\begin{Case}{3}  $G=G_4$.  \end{Case}

As in the proof of Case 1. $G=G_1$,
we will find a faithful $G$-subspace $W$ in $V^*=\bigoplus_{g\in G} k\cdot x(g)$.
The construction of $W$ is similar to that in Case 1,
but some modification should be made.

Although $\langle \sigma^2,\tau\rangle$ is an abelian subgroup of $G$,
we will consider $\langle\sigma^2\rangle$ instead.
Explicitly, define
\[
X=\sum_{0\le i\le 2^{n-3}-1} \zeta^{-i} \left[x(\sigma^{2i})+x(\sigma^{2i}\tau)\right].
\]

It follows that $\sigma^2(X)=\zeta X$ and $\tau(X)=X$.

Define $x_0=X$, $x_1=\sigma X$, $x_2=\lambda X$, $x_3=\lambda\sigma X$.
We find that
\begin{align*}
\sigma &: x_0\mapsto x_1\mapsto \zeta x_0,~ x_2\mapsto x_3\mapsto \zeta x_2, \\
\tau &: x_0\mapsto x_0,~ x_1\mapsto x_1, ~ x_2\mapsto -x_2, ~ x_3\mapsto -x_3, \\
\lambda &: x_0\mapsto x_2\mapsto x_0,~ x_1\mapsto x_3\mapsto x_1.
\end{align*}

Note that $G$ acts faithfully on $k(x_i:0\le i\le 3)$.
Hence $k(G)$ is rational over $k(x_i:0\le i\le 3)^G$ by Theorem \ref{t2.2}.

Define $y_0=x_0^{2^{n-3}}$, $y_1=x_1/x_0$, $y_2=x_2/x_1$, $y_3=x_3/x_2$.
Then $k(x_i: 0\le i\le 3)^{\langle\sigma^2\rangle}=k(y_i:0\le i\le 3)$ and
\begin{align*}
\sigma &: y_0\mapsto y_1^{2^{n-3}} y_0,~ y_1\mapsto \zeta/y_1,~ y_2\mapsto \zeta^{-1}y_1y_2y_3,~ y_3\mapsto \zeta/y_3, \\
\tau &: y_0\mapsto y_0,~ y_1\mapsto y_1,~ y_2\mapsto -y_2,~ y_3\mapsto y_3, \\
\lambda &: y_0\mapsto y_1^{2^{n-3}}y_2^{2^{n-3}}y_0,~ y_1\mapsto y_3\mapsto y_1,~ y_2\mapsto 1/(y_1y_2y_3).
\end{align*}

By Theorem \ref{t2.3}, we find that $k(y_i:0\le i\le 3)^{\langle\sigma,\tau,\lambda\rangle}
=k(y_i:1\le i\le 3)^{\langle\sigma,\tau,\lambda\rangle}(y_4)$ for some $y_4$ with $\sigma(y_4)=\tau(y_4)=\lambda(y_4)=y_4$.

It is clear that $k(y_i:1\le i\le 3)^{\langle\tau\rangle}=k(y_1,y_2^2,y_3)$.

Define $z_1=y_1$, $z_2=y_3$, $z_3=y_1y_3y_2^2$.
Then $k(y_1,y_2^2,y_3)=k(z_i:1\le i\le 3)$ and
\begin{align*}
\sigma &: z_1 \mapsto \zeta/z_1,~ z_2\mapsto \zeta/z_2,~ z_3\mapsto z_3, \\
\lambda &: z_1\mapsto z_2\mapsto z_1, ~ z_3\mapsto 1/z_3.
\end{align*}

By Theorem \ref{t2.4}, $k(z_i:1\le i\le 3)^{\langle\sigma,\lambda\rangle}$ is rational over $k$.

\bigskip

\begin{Case}{4}  $G=G_5$.  \end{Case}

The proof is similar to Case 3. $G=G_4$.
We define $X$ such that $\sigma^2(X)=\zeta X$,
$\lambda(X)=X$ (note that in the present case we require $\lambda(X)=X$ instead of $\tau(X)=X$).

Define $x_0=X$, $x_1=\sigma X$, $x_2=\tau X$, $x_3=\tau\sigma X$.
It follows that
\begin{align*}
\sigma &: x_0\mapsto x_1 \mapsto \zeta x_0,~ x_2\mapsto x_3\mapsto \zeta x_2, \\
\tau &: x_0\mapsto x_2\mapsto x_0, ~ x_1\mapsto x_3\mapsto x_1, \\
\lambda &: x_0\mapsto x_0,~ x_1\mapsto x_3\mapsto x_1,~ x_2\mapsto x_2.
\end{align*}

It follows that $G$ acts faithfully on $k(x_i:0\le i\le 3)$.
By Theorem \ref{t2.2} it suffices to show that $k(x_i:0\le i\le 3)^G$ is rational over $k$.

Define $y_0=x_0-x_2$, $y_1=x_1-x_3$, $y_2=x_0+x_2$, $y_3=x_1+x_3$.
It follows that $k(x_i:0\le i\le 3)=k(y_0:0\le i\le 3)$ and
\begin{align*}
\sigma &: y_0\mapsto y_1\mapsto \zeta y_0,~ y_2\mapsto y_3\mapsto \zeta y_2, \\
\tau &: y_0\mapsto -y_0,~ y_1\mapsto -y_1,~ y_2\mapsto y_2, ~ y_3\mapsto y_3, \\
\lambda &: y_0\mapsto y_0,~ y_1\mapsto -y_1,~ y_2\mapsto y_2,~ y_3\mapsto y_3.
\end{align*}

By Theorem \ref{t2.2} $k(y_i:0\le i\le 3)^G= k(y_0,y_1)^G (y_4,y_5)$ for some $y_4$,
$y_5$ with $g(y_4)=y_4$, $g(y_5)=y_5$ for any $g\in G$.
Note the the actions of $G$ on $y_0$, $y_1$ are monomial automorphisms.
By Theorem \ref{t2.5} $k(y_0,y_1)^G$ is rational over $k$.

\bigskip

\begin{Case}{5}  $G=G_6, G_7$.  \end{Case}

Consider the case $G=G_6$ first.

Note that $\langle \sigma^2,\tau^2\rangle$ is an abelian subgroup of $G$.
As in the proof of Case 1. $G=G_1$ we define $X$ and $Y$ in $V^*=\bigoplus_{g\in G} k\cdot x(g)$ by
\begin{equation} \label{eq3.2}
\begin{aligned}
X &= \sum_{0\le i\le 2^{n-3}-1} \zeta^{-i} \left[x(\sigma^{2i})+x(\sigma^{2i}\tau^2)\right], \\
Y &= \sum_{0\le i\le 2^{n-3}-1 \atop 0\le j\le 3} \left(\sqrt{-1}\right)^{-j} x(\sigma^{2i}\tau^j).
\end{aligned}
\end{equation}

It follows that $\sigma^2(X)=\zeta X$, $\tau^2(X)=X$, $\sigma^2(Y)=Y$, $\tau(Y)=\sqrt{-1} Y$.

Define $x_0=X$, $x_1=\sigma X$, $x_2=\tau X$, $x_3=\tau\sigma X$, $y_0=Y$, $y_1=\sigma Y$.
We get
\begin{align*}
\sigma &: x_0\mapsto x_1 \mapsto \zeta x_0,~ x_2\mapsto \zeta^{-1} x_3,~ x_3\mapsto x_2,~ y_0\mapsto y_1\mapsto y_0, \\
\tau &: x_0\mapsto x_2\mapsto x_0,~ x_1\mapsto x_3\mapsto x_1,~ y_0\mapsto \sqrt{-1} y_0,~ y_1\mapsto \sqrt{-1}y_1.
\end{align*}

Note that $G$ acts faithfully on $k(x_i,y_0,y_1:0\le i\le 3)$.
We will show that $k(x_i, y_0, y_1:0\le i\le 3)^{\langle \sigma,\tau \rangle}$ is rational over $k$.

Define $y_2=y_1/y_0$.
It follows that $\sigma(y_2)=1/y_2$, $\sigma(y_0)=y_2y_0$, $\tau(y_2)=y_2$, $\tau(y_0)=\sqrt{-1}y_0$.
By Theorem \ref{t2.3} $k(x_i,y_0,y_1: 0\le i\le 3)^{\langle \sigma,\tau \rangle}=k(x_i,y_2,y_0:0\le i\le 3)^{\langle \sigma,\tau \rangle}
=k(x_i,y_2:0\le i\le 3)^{\langle \sigma,\tau \rangle}(y_3)$ for some $y_3$ with $\sigma(y_3)=\tau(y_3)=y_3$.

Define $y_4=(1-y_2)/(1+y_2)$. Then $\sigma(y_4)=-y_4$,
$\tau(y_4)=y_4$. By Theorem \ref{t2.3} $k(x_i,y_2: 0\le i\le
3)^{\langle \sigma,\tau \rangle}=k(x_i:0\le i\le 3)^{\langle
\sigma,\tau \rangle}(y_5)$ for some $y_5$ with
$\sigma(y_5)=\tau(y_5)=y_5$.

Define $z_0=x_0$, $z_1=x_1/x_0$, $z_2=x_3/x_2$, $z_3=x_2/x_1$.
We find that
\begin{align*}
\sigma &: z_0\mapsto z_1z_0,~ z_1\mapsto \zeta/z_1,~ z_2\mapsto \zeta/z_2,~ z_3\mapsto \zeta^{-2} z_1z_2z_3, \\
\tau &: z_0\mapsto z_1z_3z_0,~ z_1\mapsto z_2\mapsto z_1,~ z_3\mapsto 1/(z_1z_2z_3).
\end{align*}

By Theorem \ref{t2.3} $k(x_i:0\le i\le 3)^{\langle \sigma,\tau
\rangle}=k(z_i:0\le i\le 3)^{\langle \sigma,\tau \rangle}
=k(z_i:1\le i\le 3)^{\langle \sigma,\tau \rangle}(z_4)$ for some
$z_4$ with $\sigma(z_4)=\tau(z_4)=z_4$.

Define $u_1=z_3^{2^{n-4}}$.
Then $k(z_i:1\le i\le 3)^{\langle \sigma^2\rangle}=k(z_1,z_2,u_1)$ and
\begin{align*}
\sigma &: z_1\mapsto \zeta/z_1,~z_2\mapsto \zeta/z_2,~ u_1\mapsto (z_1z_2)^{2^{n-4}} u_1, \\
\tau &: z_1\mapsto z_2\mapsto z_1,~ u_1\mapsto \bigl((z_1z_2)^{2^{n-4}}\cdot u_1\bigr)^{-1}.
\end{align*}

Define $u_2=(z_1z_2)^{2^{n-5}}u_1$.
Then $k(z_1,z_2,u_1)=k(z_1,z_2,u_2)$ and
\begin{align*}
\sigma &: z_1\mapsto \zeta/z_1,~z_2\mapsto \zeta/z_2,~ u_2\mapsto -u_2, \\
\tau &: z_1\mapsto z_2\mapsto z_1,~ u_2\mapsto 1/u_2.
\end{align*}

By Theorem \ref{t2.4} $k(z_1,z_2,u_2)^{\langle \sigma,\tau \rangle}$ is rational over $k$.
This solves the case $G=G_6$.

When $G=G_7$,
we use the same $X$ and $Y$ in \eqref{eq3.2}.
Define $x_0$, $x_1$, $x_2$, $x_3$, $y_0$, $y_1$ by the same formula.
The proof is almost the same as $G=G_6$. Done.

\bigskip

\begin{Case}{6}  $G=G_8$.  \end{Case}

Note that $\tau^8=1$ and $\sigma\tau^2=\tau^2\sigma$.

Define $X$ and $Y$ in $V^*=\bigoplus_{g\in G} k\cdot x(g)$ by
\[
X=\sum_{0\le i\le 2^{n-3}-1 \atop 0\le j\le 3} \zeta^{-i} x(\sigma^{2i}\tau^{2j}), \quad
Y=\sum_{0\le i\le 2^{n-3}-1 \atop 0\le j\le 3} \left(\sqrt{-1}\right)^{-j} x(\sigma^{2i} \tau^{2j}).
\]

It follows that $\sigma^2(X)=\zeta X$, $\sigma^2(Y)=Y$, $\tau^2(X)=X$, $\tau^2(Y)=\sqrt{-1}Y$.

Define $x_0=X$, $x_1=\sigma X$, $x_2=\tau X$, $x_3=\tau\sigma X$, $y_0=Y$, $y_1=\sigma Y$, $y_2=\tau Y$, $y_3=\tau\sigma Y$.
We find that
\begin{align*}
\sigma &: x_0 \mapsto x_1\mapsto \zeta x_0,~ x_2\mapsto \zeta^{-1}x_3,~ x_3\mapsto x_2,~ y_0\leftrightarrow y_1,~ y_2\leftrightarrow y_3, \\
\tau &: x_0\leftrightarrow x_2,~ x_1\leftrightarrow x_3,~ y_0\mapsto y_2\mapsto \sqrt{-1}y_0,~ y_1\mapsto y_3\mapsto \sqrt{-1} y_1.
\end{align*}

Since $G=\langle \sigma,\tau \rangle$ acts faithfully on $k(x_i,y_i:0\le i\le 3)$,
it remains to show that $k(x_i,y_i: 0\le i\le 3)^{\langle \sigma,\tau \rangle}$ is rational over $k$.

Define $z_i=x_iy_i$ for $0\le i\le 3$.
We get
\begin{equation} \label{eq3.3}
\begin{aligned}
\sigma &: z_0\mapsto z_1\mapsto \zeta z_0,~ z_2\mapsto \zeta^{-1}z_3,~ z_3\mapsto z_2, \\
\tau &: z_0 \mapsto z_2\mapsto \sqrt{-1} z_0,~ z_1\mapsto z_3\mapsto \sqrt{-1} z_1.
\end{aligned}
\end{equation}

Note that $k(x_i,y_i:0\le i\le 3)=k(x_i,z_i:0\le i\le 3)$ and $G$ acts faithfully on $k(z_i:0\le i\le 3)$.
By Theorem \ref{t2.2} $k(x_i,z_i: 0\le i\le 3)^{\langle \sigma,\tau \rangle}
=k(z_i: 0\le i\le 3)^{\langle \sigma,\tau \rangle}$ $(X_0,X_1,X_2,X_3)$
for some $X_i$ ($0\le i\le 3$) with $\sigma (X_i)=\tau(X_i)=X_i$.

Define $u_0=z_0$, $u_1=z_1/z_0$, $u_2=z_3/z_2$, $u_3=z_2/z_1$.
The actions are given by
\begin{equation} \label{eq3.4}
\begin{aligned}
\sigma &: u_0\mapsto u_1u_0,~ u_1\mapsto \zeta/u_1,~ u_2\mapsto \zeta/u_2,~ u_3\mapsto \zeta^{-2}u_1u_2u_3, \\
\tau &: u_0\mapsto u_1u_3u_0,~ u_1\leftrightarrow u_2,~ u_3\mapsto \sqrt{-1}/(u_1u_2u_3).
\end{aligned}
\end{equation}

By Theorem \ref{t2.3} $k(z_i:0\le i\le 3)^{\langle \sigma,\tau \rangle}=k(u_i:0\le i\le 3)^{\langle \sigma,\tau \rangle}
=k(u_i:1\le i\le 3)^{\langle \sigma,\tau \rangle}$ $(u_4)$ for some $u_4$ with $\sigma(u_4)=\tau(u_4)=u_4$.

Define $v_1=u_3^{2^{n-4}}$.
Then $k(u_i:1\le i\le 3)^{\langle \sigma^2 \rangle} =k(u_1,u_2,v_1)$ and $\sigma(v_1)=(u_1u_2)^{2^{n-4}}$ $v_1$,
$\tau(v_1)=\varepsilon \big/ \bigl((u_1u_2)^{2^{n-4}} u_4\bigr)$ where $\varepsilon=1$ if $n\ge 6$,
and $\varepsilon=-1$ if $n=5$.

Define $v_2=(u_1u_2)^{2^{n-5}}v_1$.
Then $\sigma (v_2)=-v_2$, $\tau(v_2)=\varepsilon/v_2$.
Since $k(u_1,u_2,v_1)^{\langle \sigma,\tau \rangle}=k(u_1,u_2,v_2)^{\langle \sigma,\tau \rangle}$ is rational over $k$
by Theorem \ref{t2.4}, the proof is finished.

\bigskip

\begin{Case}{7}  $G=G_9$.  \end{Case}

Note that $\sigma^2\tau=\tau\sigma^2$.

Define $X$ and $Y$ in $V^*=\bigoplus_{g\in G}k\cdot x(g)$ by
\[
X=\sum_{0\le i\le 2^{n-3}-1 \atop 0\le j\le 3} \zeta^{-i} x(\sigma^{2i}\tau^j), \quad
Y=\sum_{0\le i\le 2^{n-3}-1 \atop 0\le j\le 3} \left(\sqrt{-1}\right)^{-j} x(\sigma^{2i} \tau^j).
\]

It follows that $\sigma^2(X)=\zeta X$, $\sigma^2(Y)=Y$, $\tau(X)=X$, $\tau(Y)=\sqrt{-1}Y$.

Define $x_0=X$, $x_1=\sigma X$, $y_0=Y$, $y_1=\sigma Y$.
We get
\begin{align*}
\sigma &: x_0\mapsto x_1\mapsto\zeta x_0,~ y_0\mapsto y_1\mapsto y_0, \\
\tau &: x_0\mapsto x_0,~ x_1\mapsto x_1,~ y_0\mapsto \sqrt{-1} y_0,~ y_1\mapsto -\sqrt{-1} y_1.
\end{align*}

It remains to prove $k(x_0,x_1,y_0,y_1)^{\langle\sigma,\tau\rangle}$ is rational over $k$.
The proof is almost the same as Case 1. $G=G_1$. Done.

\bigskip

\begin{Case}{8}  $G=G_{12}$.  \end{Case}

Define $X\in V^*=\bigoplus_{g\in G} h\cdot x(g)$ by
\[
X=\sum_{0\le i\le 2^{n-3}-1} \zeta^{-i}\left[ x(\sigma^{2i})+x(\sigma^{2i}\tau)\right].
\]
Then $\sigma^2 X=\zeta X$, $\tau X=X$.

Define $x_0=X$, $x_1=\sigma X$, $x_2=\lambda X$, $x_3=\lambda\sigma X$.
We find that
\begin{align*}
\sigma &: x_0 \mapsto x_1\mapsto \zeta x_0,~ x_2\mapsto \zeta^{-1} x_3,~ x_3\mapsto x_2, \\
\tau &: x_0\mapsto x_0,~ x_1\mapsto x_1,~ x_2\mapsto -x_2,~ x_3\mapsto -x_3, \\
\lambda &: x_0\leftrightarrow x_2,~ x_1\leftrightarrow x_3.
\end{align*}

Since $G=\langle\sigma,\tau,\lambda\rangle$ is faithful on $k(x_i:0\le i\le 3)$,
it remains to show that $k(x_i:0\le i\le 3)^{\langle\sigma,\tau,\lambda\rangle}$ is rational over $k$.

Define $y_0=x_0$, $y_1=x_1/x_0$, $y_2=x_3/x_2$, $y_3=x_2/x_1$.
We get
\begin{equation} \label{eq3.5}
\begin{aligned}
\sigma &: y_0\mapsto y_1y_0,~ y_1\mapsto \zeta/y_1,~ y_2\mapsto \zeta/y_2,~ y_3\mapsto \zeta^{-2} y_1y_2y_3, \\
\tau &: y_0\mapsto y_0,~ y_1\mapsto y_1,~ y_2\mapsto y_2,~ y_3\mapsto -y_3, \\
\lambda &: y_0\mapsto y_1y_3y_0,~ y_1\leftrightarrow y_2,~ y_3\mapsto 1/(y_1y_2y_3).
\end{aligned}
\end{equation}

By Theorem \ref{t2.3} $k(y_i:0\le i\le 3)^{\langle\sigma,\tau,\lambda\rangle}=k(y_i:1\le i\le 3)^{\langle\sigma,\tau,\lambda\rangle}(y_4)$
for some $y_4$ with $\sigma(y_4)=\tau(y_4)=\lambda(y_4)=y_4$.

Define $z_1=y_3^2$. Then $k(y_i:1\le i\le 3)^{\langle
\tau\rangle}=k(y_1,y_2,z_1)$ and
$\sigma(z_1)=\zeta^{-4}y_1^2y_2^2z_1$,
$\lambda(z_1)=1/(y_1^2y_2^2z_1)$.

Define $z_2=z_1^{2^{n-5}}$. Then
$k(y_1,y_2,z_1)^{\langle\sigma^2\rangle}=k(y_1,y_2,z_2)$ and
$\sigma(z_2)=(y_1y_2)^{2^{n-4}}z_2$,
$\lambda(z_2)=1/\bigl((y_1y_2)^{2^{n-4}} z_2\bigl)$.

Define $z_3=(y_1y_2)^{2^{n-5}}z_2$. We find that
$k(y_1,y_2,z_2)=k(y_1,y_2,z_3)$ and $\sigma(z_3)=-z_3$,
$\lambda(z_3)=1/z_3$. By Theorem \ref{t2.4},
$k(y_1,y_2,y_3)^{\langle\sigma,\tau\rangle}$ is rational over $k$.
Done.

\bigskip

\begin{Case}{9}  $G=G_{13}$, $G_{14}$.  \end{Case}

We consider the case $G=G_{13}$ only, because the proof for $G=G_{14}$ is almost the same (with the same way of changing the variables).

Define $X$ and $Y$ in $V^*=\bigoplus_{g\in G} k\cdot x(g)$ by
\begin{align*}
X &= \sum_{0\le i\le 2^{n-3}-1} \zeta^{-i} \left[x(\sigma^{2i})+x(\sigma^{2i}\tau)\right], \\
Y &= \sum_{0\le i\le 2^{n-3}-1} x(\sigma^{2i}) - \sum_{0\le i\le 2^{n-3}-1} x(\sigma^{2i}\tau).
\end{align*}

We find that $\sigma^2(X)=\zeta X$, $\sigma^2 (Y)=Y$, $\tau(X)=X$, $\tau(Y)=-Y$.

Define $x_0=X$, $x_1=\sigma X$, $x_2=\lambda X$, $x_3=\lambda\sigma X$, $y_0=Y$, $y_1=\sigma Y$, $y_2=\lambda Y$, $y_3=\lambda\sigma Y$.
It follows that
\begin{align*}
\sigma &: x_0\mapsto x_1\mapsto\zeta x_0,~ x_2\mapsto \zeta^{-1}x_3,~ x_3\mapsto x_2,~ y_0\leftrightarrow y_1,~ y_2\leftrightarrow -y_3, \\
\tau &: x_i\mapsto x_i,~ y_i \mapsto -y_i, \\
\lambda &: x_0\leftrightarrow x_2,~ x_1\leftrightarrow x_3,~ y_0\leftrightarrow y_2,~ y_1\leftrightarrow y_3.
\end{align*}

Note that $G$ acts faithfully on $k(x_i,y_i:0\le i\le 3)$.
Thus it remains to show that $k(x_i,y_i: 0\le i\le 3)^{\langle\sigma,\tau,\lambda\rangle}$ is rational over $k$.

Define $x_4=y_0+y_1, x_5=y_2+y_3, x_6=y_0-y_1, x_7=y_2-y_3$. Then
$k(x_i,y_i: 0\le i\le 3)=k(x_i: 0\le i\le 7)$, and
$\sigma(x_i)=x_i$ for $i=4,7$, $\sigma(x_i)=-x_i$ for $i=5,6$,
$\tau(x_i)=-x_i$ for $4 \leq i \leq 7$, $\lambda:
x_4\leftrightarrow x_5,~ x_6\leftrightarrow x_7$.

Apply Theorem \ref{t2.2} to  $k(x_i: 0\le i\le 7)$. It suffices to
prove that $k(x_i: 0\le i\le
5)^{\langle\sigma,\tau,\lambda\rangle}$ is rational over $k$.

Define $Z=x_5/x_4$. Then $k(x_i: 0\le i\le 5)=k(x_i, Z: 0\le i\le
4)$ and $\sigma (Z)=-Z, \tau(Z)=Z, \lambda(Z)=1/Z$. Apply Theorem
\ref{t2.3} to $k(x_i: 0\le i\le 5)$. It remains to prove that
$k(x_i, Z: 0\le i\le 3)^{\langle\sigma,\tau,\lambda\rangle}$ is
rational over $k$. Note that the action of $\tau$ becomes trivial
on $k(x_i, Z: 0\le i\le 3)$.

Define $u_0=x_0, u_1=x_1/x_0, u_2=x_3/x_2, u_3=x_2/x_1,u_4=Z$. By
Theorem \ref{t2.3} $k(x_i, Z: 0\le i\le
3)^{\langle\sigma,\lambda\rangle}=k(u_i: 1\le i\le
4)^{\langle\sigma,\lambda\rangle}(U)$ for some element $U$ fixed
by the action of $G$. The actions of $\sigma$ and $\lambda$ are
given by
\begin{align*}
\sigma &: u_1\mapsto \zeta/u_1,~ u_2\mapsto \zeta/u_2,~ u_3\mapsto \zeta^{-2} u_1u_2u_3,
~ u_4\mapsto -u_4, \\
\lambda &: u_1\leftrightarrow u_2,~ u_3\mapsto 1/(u_1u_2u_3),
u_4\mapsto 1/u_4.
\end{align*}

Note that $\sigma^2$ fixes $u_1,u_2,u_4$ and
$\sigma^2(u_3)=\zeta^{-2}u_3$. Define $u_5=u_3^{2^{n-4}}$. Then
$k(u_i: 1\le i\le 4)^{\langle\sigma^2\rangle}=k(u_1,u_2,u_4,u_5)$
and $\sigma(u_5)=(u_1u_2)^{2^{n-4}}u_5,
\lambda(u_5)=1/((u_1u_2)^{2^{n-4}}u_5)$.

Define $u_6=(u_1u_2)^{2^{n-5}}u_5$. Then
$k(u_1,u_2,u_4,u_5)=k(u_1,u_2,u_4,u_6)$ and we get
\begin{align*}
\sigma &: u_1\mapsto \zeta/u_1,~ u_2\mapsto \zeta/u_2,~ u_6\mapsto -u_6,~ u_4\mapsto -u_4, \\
\lambda &: ~ u_1\leftrightarrow u_2,~ u_6\mapsto 1/u_6,~
u_4\mapsto 1/u_4.
\end{align*}

Define $u_7=u_4u_6$. Then $\sigma(u_7)=u_7,\lambda(u_7)=1/u_7$.
Define $u_8=(1-u_7)/(1+u_7)$. Then
$\sigma(u_8)=u_8,\lambda(u_8)=-u_8$. Since
$k(u_1,u_2,u_4,u_6)=k(u_1,u_2,u_6,u_8)$, we may apply Theorem
\ref{t2.3}. Thus it suffices to prove that
$k(u_1,u_2,u_6)^{\langle\sigma,\lambda\rangle}$ is rational over
$k$. By Theorem \ref{t2.4}
$k(u_1,u_2,u_6)^{\langle\sigma,\lambda\rangle}$ is rational over
$k$. Done.

\bigskip

\begin{Case}{10}  $G=G_{15}$, $G_{16}$, $G_{17}$, $G_{18}$, $G_{24}$, $G_{25}$.  \end{Case}

These cases were proved in \cite[Section 5]{Ka4}.
Note that in Cases $5\sim 8$ of \cite[Section 5]{Ka4}, only $\zeta_{2^{n-3}}\in k$ was used.
Hence the result.

\bigskip

\begin{Case}{11}  $G=G_{19}$, $G_{20}$.  \end{Case}

We consider the case $G=G_{19}$ only,
because the proof for $G=G_{20}$ is almost the same.

Define $X\in V^*=\bigoplus_{g\in G} k\cdot x(g)$ by
\[
X=\sum_{0\le i\le 2^{n-3}-1} \zeta^{-i} \left[x(\sigma^{2i})+x(\sigma^{2i}\tau^2)\right].
\]

Then $\sigma^2(X)=\zeta X$ and $\tau^2(X)=X$.

Define $x_0=X$, $x_1=\sigma X$, $x_2=\tau X$, $x_3=\tau\sigma X$.
We find that
\begin{align*}
\sigma &: x_0\mapsto x_1 \mapsto \zeta x_0,~ x_2\mapsto \sqrt{-1} x_3,~ x_3\mapsto \sqrt{-1}\zeta x_2, \\
\tau &: x_0\leftrightarrow x_2,~ x_1\mapsto x_3\mapsto -x_1.
\end{align*}

Thus $G$ acts faithfully on $k(x_i:0\le i\le 3)$.
It remains to prove $k(x_i:0\le i\le 3)^{\langle\sigma,\tau\rangle}$ is rational over $k$.

Define $u_0=x_0$, $u_1=x_1/x_0$, $u_2=x_3/x_2$, $u_3=x_2/x_1$.
We find that
\begin{equation} \label{eq3.6}
\begin{aligned}
\sigma &: u_0\mapsto u_1u_0,~ u_1\mapsto \zeta/u_1,~ u_2\mapsto \zeta/u_2,~ u_3\mapsto \sqrt{-1} \zeta^{-1}u_1u_2u_3, \\
\tau &: u_0\mapsto u_1u_3u_0,~ u_1\mapsto u_2\mapsto -u_1,~ u_3\mapsto 1/(u_1u_2u_3).
\end{aligned}
\end{equation}

Compare the formula \eqref{eq3.6} with the formula \eqref{eq3.4}
in the proof of Case 6. $G=G_8$. It is not difficult to see that
the proof is almost the same as that of Case 6. $G=G_8$ (by taking
the fixed field of the subgroup $<\sigma^2>$ first, and then
making similar changes of variables). Done.

\bigskip

\begin{Case}{12} $G=G_{21}$.  \end{Case}

Note that $\tau^8=1$ and $\sigma^2 \tau=\tau\sigma^2$.

Define $X$ and $Y$ in $V^*=\bigoplus_{g\in G} k\cdot x(g)$ by
\[
X=\sum_{0\le i\le 2^{n-3}-1 \atop 0\le j\le 3} \zeta^{-i} x(\sigma^{2i} \tau^{2j}), \quad
Y=\sum_{0\le i\le 2^{n-3}-1 \atop 0\le j\le 2} \left(\sqrt{-1}\right)^{-j} x(\sigma^{2i} \tau^{2j}).
\]

Then $\sigma^2(X)=\zeta X$, $\sigma^2(Y)=Y$, $\tau^2(X)=X$, $\tau^2(Y)=\sqrt{-1}Y$.

Define $x_0=X$, $x_1=\sigma X$, $x_2=\tau X$, $x_3=\tau\sigma X$, $y_0=Y$, $y_1=\sigma Y$, $y_2=\tau Y$, $y_3=\tau \sigma Y$.
We find that
\begin{align*}
\sigma &: x_0\mapsto x_1\mapsto \zeta x_0,~x_2\mapsto x_3 \mapsto \zeta x_2,~ y_0\leftrightarrow y_1,~ y_2\leftrightarrow \sqrt{-1}y_3, \\
\tau &: x_0\leftrightarrow x_2,~ x_1\leftrightarrow x_3,~
y_0\mapsto y_2\mapsto \sqrt{-1}y_0,~ y_1\mapsto y_3\mapsto -
\sqrt{-1} y_1.
\end{align*}

Since $G$ is faithful on $k(x_i,y_i:0\le i\le 3)$,
it remains to show that $k(x_i,y_i:0\le i\le 3)^{\langle\sigma,\tau\rangle}$ is rational over $k$.

Define $z_i=x_iy_i$ for $0\le i\le 3$. It follows that
\begin{equation} \label{eq3.7}
\begin{aligned}
\sigma &: z_0 \mapsto z_1\mapsto \zeta z_0,~ z_2\mapsto \sqrt{-1} z_3,~z_3\mapsto - \sqrt{-1} \zeta z_2, \\
\tau &: z_0\mapsto z_2\mapsto \sqrt{-1}z_0,~ z_1\mapsto z_3\mapsto
- \sqrt{-1}z_1.
\end{aligned}
\end{equation}

Compare the formulae \eqref{eq3.7} and \eqref{eq3.3}. They are
almost the same. Thus it is obvious that $k(x_i,y_i:0\le i\le
3)^{\langle\sigma,\tau\rangle}$ is rational over $k$.

\bigskip

\begin{Case}{13}  $G=G_{22}$, $G_{23}$.  \end{Case}

We consider the case $G=G_{23}$, because the proof for $G=G_{22}$ is almost the same.

Define $X\in V^*=\bigoplus_{g\in G} k\cdot x(g)$ by
\[
X=\sum_{0\le i\le 2^{n-3}-1} \zeta^{-i} \left[x(\sigma^{2i})+x(\sigma^{2i}\tau)\right].
\]

Then $\sigma^2(X)=\zeta X$, $\tau (X)=X$.

Define $x_0=X$, $x_1=\sigma X$, $x_2=\lambda X$, $x_3=\lambda\sigma X$.
We find that
\begin{align*}
\sigma &: x_0\mapsto x_1\mapsto\zeta x_0,~ x_2\mapsto \sqrt{-1}\zeta^{-1}x_3,~ x_3\mapsto \sqrt{-1}x_2, \\
\tau &: x_0\mapsto x_0, x_1\mapsto x_1,~ x_2\mapsto -x_2,~ x_3\mapsto -x_3, \\
\lambda &: x_0\leftrightarrow x_2,~ x_1\leftrightarrow x_3.
\end{align*}

Note that $G$ acts faithfully on $k(x_i:0\le i\le 3)$.
It remains to show that $k(x_i:0\le i\le 3)^{\langle\sigma,\tau,\lambda\rangle}$ is rational over $k$.

Define $y_0=x_0$, $y_1=x_1/x_0$, $y_2=x_3/x_2$, $y_3=x_2/x_1$.
We get
\begin{equation} \label{eq3.8}
\begin{aligned}
\sigma &: y_0\mapsto y_1y_0,~y_1\mapsto \zeta/y_1,~y_2\mapsto \zeta/y_2,~ y_3\mapsto \sqrt{-1} \zeta^{-2} y_1y_2y_3, \\
\tau &: y_0\mapsto y_0,~ y_1\mapsto y_1,~ y_2\mapsto y_2,~ y_3\mapsto -y_3, \\
\lambda &: y_0\mapsto y_1y_3y_0,~ y_1\leftrightarrow y_2,~ y_3\leftrightarrow 1/(y_1y_2y_3).
\end{aligned}
\end{equation}

Compare the formula \eqref{eq3.8} with the formula \eqref{eq3.5}
in the proof of Case 8. $G=G_{12}$. It is not difficult to show
that $k(x_i:0\le i\le 3)^{\langle\sigma,\tau,\lambda\rangle}$ is
rational over $k$ in the present case.

\bigskip

\begin{Case}{14}  $G=G_{26}$.  \end{Case}

Note that $\lambda^4=1$ and $\sigma^2\tau=\tau\sigma^2$.

Define $X\in V^*=\bigoplus_{g\in G} k\cdot x(g)$ by
\[
X=\sum_{0\le i\le 3} \left(\sqrt{-1}\right)^{-i} \left[x(\sigma^{2i})+x(\sigma^{2i}\tau)\right].
\]

Then $\sigma^2(X)=\sqrt{-1}X$, $\tau(X)=X$.

Define $x_0=X$, $x_1=\sigma X$, $x_2=\lambda X$, $x_3=\lambda\sigma X$.
We find that
\begin{align*}
\sigma &: x_0\mapsto x_1\mapsto \sqrt{-1} x_0,~ x_2\mapsto x_3\mapsto -\sqrt{-1}x_2, \\
\tau &: x_0\mapsto x_0,~ x_1\mapsto -x_1,~ x_2\mapsto x_2,~ x_3\mapsto -x_3, \\
\lambda &: x_0\mapsto x_2\mapsto -x_0,~ x_1\mapsto x_3\mapsto -x_1.
\end{align*}

Since $G$ is faithful on $k(x_i:0\le i\le 3)$,
it remains to show that $k(x_i:0\le i\le 3)^{\langle\sigma,\tau,\lambda\rangle}$ is rational over $k$.

Define $y_0=x_0$, $y_1=x_1/x_0$, $y_2=x_3/x_2$, $y_3=x_2/x_1$.
We get
\begin{align*}
\sigma &: y_0\mapsto y_1y_0,~ y_1\mapsto \sqrt{-1}/y_1,~ y_2\mapsto -\sqrt{-1}/y_2,~ y_3\mapsto -\sqrt{-1}y_1y_2y_3, \\
\tau &: y_0\mapsto y_0,~ y_1\mapsto -y_1,~ y_2\mapsto -y_2,~ y_3\mapsto -y_3, \\
\lambda &: y_0\mapsto y_1y_3y_0,~y_1\leftrightarrow y_2,~ y_3\mapsto -1/(y_1y_2y_3).
\end{align*}

By Theorem \ref{t2.3} $k(y_i:0\le i\le 3)^{\langle\sigma,\tau,\lambda\rangle}=k(y_i:1\le i\le 3)^{\langle\sigma,\tau,\lambda\rangle} (y_4)$
for some $y_4$ with $\sigma(y_4)=\tau(y_4)=\lambda(y_4)=y_4$.

Define $v_0=y_3^2$.
Then $k(y_i:1\le i\le 3)^{\langle \sigma^2\rangle}=k(v_0,y_1,y_2)$ and
\[
\sigma(v_0)=-(y_1y_2)^2v_0,\quad \tau(v_0)=v_0,\quad \lambda(v_0)=1/(y_1^2y_2^2v_0).
\]

Define $v_1=y_1y_2$, $v_2=y_1/y_2$.
Then $k(v_0,y_1,y_2)^{\langle\tau\rangle}=k(v_i:0\le i\le 3)$ and
\begin{align*}
\sigma &: v_1\mapsto 1/v_1,~v_2\mapsto -1/v_2,~ v_0\mapsto -v_1^2v_0, \\
\lambda &: v_1\mapsto v_1,~ v_2\mapsto 1/v_2,~ v_0\mapsto 1/(v_1^2v_0).
\end{align*}

Define $u_1=v_1v_0$, $u_2=v_2$, $u_3=(1-v_1)/(1+v_1)$.
Then $k(v_i:0\le i\le 2)=k(u_i:1\le i\le 3)$ and
\begin{align*}
\sigma &: u_1\mapsto -u_1,~ u_2\mapsto -1/u_2,~ u_3\mapsto -u_3, \\
\lambda &: u_1\mapsto 1/u_1,~ u_2\mapsto 1/u_2,~ u_3\mapsto u_3.
\end{align*}

By Theorem \ref{t2.2} $k(u_i:1\le i\le 3)^{\langle\sigma,\tau\rangle}=k(u_1,u_2)^{\langle\sigma,\tau\rangle}(u_4)$ for some $u_4$ with $\sigma(u_4)=\tau(u_4)=u_4$.
By Theorem \ref{t2.5} $k(u_1,u_2)^{\langle\sigma,\tau\rangle}$ is rational over $k$.
Hence $k(u_i:1\le i\le 3)^{\langle\sigma,\tau\rangle}$ is rational over $k$.
\qed

\section{The proof of Theorem \ref{t1.8}}

\begin{theorem} \label{t4.1}
Let $G$ be a group of order $4n$ where $n$ is any positive integer.
Assume that $G$ contains a cyclic subgroup of order $n$.
If $k$ is a field satisfying that $\fn{char}k\ne 2$, $\zeta_n \in k$ and $\sqrt{-1}\in k$,
then $k(G)$ is rational over $k$.
\end{theorem}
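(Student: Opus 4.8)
The plan is to realize a small faithful $G$-representation by inducing a faithful character of the cyclic subgroup of order $n$, and then to exchange that four-dimensional action for a three-dimensional monomial action to which Theorem~\ref{t2.7} applies directly. Throughout, the role of the hypothesis $\sqrt{-1}\in k$ is to kill exactly the exceptional behaviour that appears in Theorem~\ref{t1.8} and in Theorem~\ref{t2.8}.

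Write $N=\langle\sigma\rangle$ for a cyclic subgroup of order $n$, so $[G:N]=4$, and fix left coset representatives $g_0=1,g_1,g_2,g_3$ of $N$ in $G$. Since $\zeta_n\in k$, the assignment $\sigma\mapsto\zeta_n$ is a faithful one-dimensional $k$-representation of $N$. Inside the dual of the regular representation $V^*=\bigoplus_{g\in G}k\cdot x(g)$ I would set $X=\sum_{0\le i\le n-1}\zeta_n^{-i}x(\sigma^i)$, so that $\sigma\cdot X=\zeta_n X$, and put $x_j=g_j\cdot X$ for $0\le j\le 3$. For $g\in G$, writing $gg_i=g_{\pi(i)}\sigma^{a_i}$ with $\pi(i)\in\{0,1,2,3\}$ and $0\le a_i\le n-1$ (where $\pi$ and the $a_i$ depend on $g$) gives $g\cdot x_i=\zeta_n^{a_i}x_{\pi(i)}$. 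Thus $W=\bigoplus_{j=0}^{3}k\cdot x_j$ is a $G$-subspace of $V^*$ on which $G$ acts by a monomial action with coefficients in $\langle\zeta_n\rangle$, and $W$ is faithful: if $g\cdot x_0=x_0$ then since $g=gg_0=g_{\pi(0)}\sigma^{a_0}$ we must have $\pi(0)=0$, so $g=\sigma^{a_0}\in N$, and then $\zeta_n^{a_0}=1$ forces $g=1$. By Theorem~\ref{t2.2} (extend $x_0,\ldots,x_3$ to a basis of $V^*$), $k(G)$ is rational over $k(x_0,x_1,x_2,x_3)^G$, so it remains to show the latter field is $k$-rational.

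Next I would pass to ratios. Put $y_i=x_i/x_0$ for $i=1,2,3$ and $y_0:=1$, so $k(x_0,x_1,x_2,x_3)=k(y_1,y_2,y_3)(x_0)$. From $g\cdot x_i=\zeta_n^{a_i}x_{\pi(i)}$ one computes $g\cdot y_i=\zeta_n^{a_i-a_0}\,y_{\pi(i)}/y_{\pi(0)}$, a Laurent monomial in $y_1,y_2,y_3$ times an element of $\langle\zeta_n\rangle$; hence $G$ acts on $k(y_1,y_2,y_3)$ by monomial $k$-automorphisms with coefficients in $\langle\zeta_n\rangle$, and moreover $g\cdot x_0=(\zeta_n^{a_0}y_{\pi(0)})\,x_0$ with coefficient in $k(y_1,y_2,y_3)^{\times}$. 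Theorem~\ref{t2.3} (with vanishing translation term) then gives $k(x_0,x_1,x_2,x_3)^G=k(y_1,y_2,y_3)^G(f)$ for some $G$-invariant $f$, so the problem is reduced to the $k$-rationality of $k(y_1,y_2,y_3)^G$.

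Finally, since $\sqrt{-1}\in k$, Theorem~\ref{t2.7} applies verbatim with $\alpha=\zeta_n$ and shows $k(y_1,y_2,y_3)^G$ is rational over $k$; chaining the three reductions proves the theorem. The only steps needing a little care are the routine bookkeeping in invoking Theorem~\ref{t2.2} and the verification that the $y_i$ carry a monomial action with coefficients in $\langle\zeta_n\rangle$. I do not expect a genuine obstacle here: the delicate part of Theorem~\ref{t1.8}, namely the exceptional family involving $C_8$, is precisely the situation excluded by the hypothesis $\sqrt{-1}\in k$, which is exactly why Theorem~\ref{t4.1} is comparatively easy.
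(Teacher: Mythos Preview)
Your proposal is correct and follows essentially the same route as the paper's own proof: induce a faithful character of the index-$4$ cyclic subgroup to obtain a four-dimensional faithful $G$-subspace of the regular representation, reduce via Theorem~\ref{t2.2} and then Theorem~\ref{t2.3} to a three-variable monomial action with coefficients in $\langle\zeta_n\rangle$, and finish with Theorem~\ref{t2.7} using $\sqrt{-1}\in k$. Your write-up is in fact slightly more explicit than the paper's (you give the formula for $X$, verify faithfulness by computing the stabiliser of $x_0$, and spell out the monomial action on the $y_i$), but the architecture is identical.
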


\begin{proof}
Choose $\sigma\in G$ such that the order of $\sigma$ is $n$ and write $H=\langle \sigma \rangle$.
Let $G=$ $H\cup \tau_1H\cup \tau_2H\cup \tau_3 H$ be a coset decomposition of $G$ with respect to $H$.
The idea of proving the rationality of $k(G)$ over $k$ is very similar to that in Section 3,
but it is unnecessary to specify the group action of $G$ on the faithful representation space.

Step 1. Choose a vector $X$ with $\sigma \cdot X=\zeta X$ where
$\zeta =\zeta_n$ (see, for example, the proof of Case 1 in Section
3). Find the induced representation by defining $x_0=X$,
$x_1=\tau_1\cdot X$, $x_2=\tau_2\cdot X$, $x_3=\tau_3\cdot X$. For
any $\lambda\in G$, any $x_i$ where $0\le i\le 3$, $\lambda \cdot
x_i=\zeta^c x_j$ for some $j \in \{0,1,2,3\}$ and an integer $c$.
For example, consider $\lambda \cdot x_2$. Since $\lambda \tau_2
\in G$, it follows that $\lambda \tau_2$ belongs to one of the
cosets $H$, $\tau_1H$, $\tau_2H$, $\tau_3H$. Suppose $\lambda
\tau_2 \in \tau_3 H$ and $\lambda \tau_2=\tau_3\sigma^c$. Then
$\lambda \cdot x_2=\lambda \cdot (\tau_2\cdot X)=(\lambda
\tau_2)\cdot X=(\tau_3\sigma^c)\cdot X=\zeta^c x_3$.

Step 2. It is not difficult to see that $G$ acts on the field
$k(x_0,x_1,x_2,x_3)$ faithfully. For, if $\lambda\in G$, $\lambda$
belongs to one of the cosets $H$, $\tau_1H$, $\tau_2H$, $\tau_3H$.
Suppose $\lambda \in \tau_1H$ and $\lambda=\tau_1\sigma^a$ for
some integer $a$, then $\lambda\cdot x_0=\tau_1\sigma^a\cdot
x_0=\tau_1\sigma^a\cdot X=\zeta^a \tau_1\cdot X=\zeta^a x_1$. Thus
$\lambda \cdot x_0\ne x_0$.

Similar to the situation in Case 1 of Section 3, we may apply Theorem \ref{t2.2} to show that $k(G)$ is rational over $k(x_0,x_1,x_2,x_3)^G$.
It remains to show that $k(x_0,x_1,x_2,x_3)^G$ is rational over $k$.

Step 3.
Define $y_i=x_i/x_0$ for $1\le i\le 3$.
By Theorem \ref{t2.3}, $k(x_0,x_1,x_2,x_3)^G=k(y_1,y_2,y_3)^G(f)$ for some polynomial $f\in k(y_1,y_2,y_3)[x_0]$.
It remains to show that $k(y_1,y_2,y_3)^G$ is rational over $k$.

Step 4. Note that the action of $G$ on $k(y_1,y_2,y_3)$ is a
monomial action with coefficient in $\langle \zeta \rangle$. Apply
Theorem \ref{t2.7} (we may apply Theorem \ref{t2.8} as well
because $\sqrt{-1}\in k$). We find that $k(y_1,y_2,y_3)^G$ is
rational over $k$.
\end{proof}

\bigskip
\begin{proof}[Proof of Theorem \ref{t1.8}] \, \, Case 1. $n=4t$ for some integer $t$.

Since $n=4t$, the assumption $\zeta_n \in k$ implies $\sqrt{-1}\in
k$ also. Hence we may apply Theorem \ref{t4.1}.

\bigskip
Case 2. $n$ is an odd integer.

The proof is almost the same as that of Theorem \ref{t4.1}, except
that we don't assume that $\sqrt{-1}\in k$. The proof of Theorem
\ref{t4.1} till Step 3 remains valid in the present situation. But
we will apply Theorem \ref{t2.8} this time. Note that the
``exceptional" case will not arise because $n$ is odd and $-1
\notin \langle \zeta_n \rangle$.

\bigskip
Case 3. $n=2m$ where $m$ is an odd integer.

The proof is also the same as in Theorem \ref{t4.1}, i.e. by using
Theorem \ref{t2.8} it remains to consider the situation that there
is a normal subgroup $H$ of $G$ such that $G/H$ is cyclic of order
$4$.

\bigskip
Step 1. Choose $\sigma\in G$ such that the order of $\sigma$ is
$n$. Since $\sigma^2$ is of odd order $m$, it follows that
$\sigma^2 \in H$. We will show that the cyclic subgroup $\langle
\sigma^2 \rangle$ is normal in $G$.

Note that the $2$-Sylow subgroup, say $C$, of $H$ is of order $2$,
hence is cyclic. By Burnside's $p$-normal complement theorem,
there is a normal subgroup $H_0$ of $H$ such that $H_0$ is of
order $m$ and $H=H_0C$ \cite[Corollary 5.14,p.160]{Is}. In
particular, $H$ is a solvable group.

For any $\lambda \in G$, we will show that $\lambda^{-1} \sigma^2
\lambda \in \langle \sigma^2 \rangle$. Since $\lambda^{-1}
\sigma^2 \lambda \in \lambda^{-1} H \lambda = H$, it follows that
all of $H_0, \langle \sigma^2 \rangle, \langle \lambda^{-1}
\sigma^2 \lambda \rangle$ are subgroups of order $m$ in $H$. By
Hall's Theorem, they are conjugate in $H$ \cite[Theorem 3.14]{Is}.
Since $H_0$ is normal in $H$, we find these three subgroups are
equal to each another. Thus $\lambda^{-1} \sigma^2 \lambda \in
\langle \sigma^2 \rangle$.

\bigskip
Step 2. We will show that the $2$-Sylow subgroups of $G$ are
isomorphic to $C_8$ or $C_4 \times C_2$.

Let $P_1$ be a Sylow $2$-subgroup of $G$ containing $C$, which is
generated by an order $2$ element in $H$. Note $P_1$ is of order
$8$ and $G$ is a semi-direct product of $\langle \sigma^2 \rangle$
and $P_1$. It follows that $G/H \simeq P_1/C$. Thus $P_1$ is a
group of order $8$ with a cyclic quotient of order $4$. Hence
$P_1$ is not isomorphic to the dihedral group of order $8$ or the
quaternion group of order $8$. The only possibility is that $P_1$
is an abelian group. In particular, $P_1 \simeq C_8$ or $C_4
\times C_2$.

\bigskip
Step 3. We will show that $\langle \sigma \rangle$ is a normal
subgroup of $G$ and $\sigma^m \in Z(G)$, the center of $G$.

Choose a $2$-Sylow subgroup $P_2$ containing $\sigma^m$. Note that
$G$ is the semi-direct product of $\langle \sigma^2 \rangle$ and
$P_2$. Since $P_2$ is abelian, $\sigma^m$ commutes with every
element of $P_2$ (and also with $\sigma^2$). Hence it belongs to
$Z(G)$.

It follows that the subgroup generated by $\langle \sigma^2
\rangle$ and $\langle \sigma^m \rangle$ is normal in $G$. This
subgroup is nothing but $\langle \sigma \rangle$.

\bigskip
Step 4. By Step 2, $P_2 \simeq C_8$ or $C_4 \times C_2$.

If $P_2 \simeq C_8$, choose a generator $\tau$ of $P_2$. It
follows that $\tau^4=\sigma^m$. By Burnside's Theorem again, we
find that $G=N \rtimes \langle \tau \rangle$ where $N$ is a normal
subgroup of order $m$ \cite[Corollary 5.14,p.160]{Is}. Hence $G
\simeq C_m \rtimes C_8$. Using Hall's Theorem again \cite[Theorem
3.14]{Is}, we find that $N=\langle \sigma^2 \rangle$.

If $P_2 \simeq C_4 \times C_2$, choose generators $\tau, \tau_1$
of $P_2$ so that $\langle \tau \rangle \simeq C_4$ and $\langle
\tau_1 \rangle \simeq C_2$. If $\sigma^m=\tau^2$, then $G/\langle
\sigma \rangle \simeq C_2 \times C_2$. If $\sigma^m=\tau_1$ or
$\tau_1\tau^2$, we may change the generators $\tau, \tau_1$ and
thus we may assume that $\sigma^m=\tau_1$ with $G=\langle \sigma
\rangle \rtimes \langle \tau \rangle$.

\bigskip
Step 5. Write $\zeta = \zeta_n$.

Subcase 1. $P_2=\langle \tau, \tau_1 \rangle$, $\sigma^m=\tau^2$,
and $G/\langle \sigma \rangle  \simeq C_2 \times C_2$.

As before (see the proof of Theorem \ref{t4.1} for details),
choose a vector $X$ such that $\sigma \cdot X=\zeta X$. Define
$u_0=X$, $u_1=\tau\cdot X$, $u_2=\tau_1\cdot X$,
$u_3=\tau\tau_1\cdot X$. We find that $k(G)$ is rational over
$k(u_0,u_1,u_2,u_3)^G$ and $k(u_0,u_1,u_2,u_3)^G$ is rational over
$k(v_1,v_2,v_3)^G$ where $v_i=u_i/u_0$ for $1\le i\le 3$.

Note that $G$ acts on $k(v_1,v_2,v_3)$ by monomial actions with
coefficients in $\langle \zeta \rangle$.

Since $\langle \sigma \rangle$ is normal in $G$, the action of
$\sigma$ on $u_0,u_1,u_2,u_3$ becomes very simple. We find that
$\sigma (u_j) =\zeta^{b_j}u_j$ for some integer $b_j$. Thus
$\sigma (v_j) =\zeta^{a_j}v_j$ for some integer $a_j$. Now apply
\cite[Lemma 2.8]{KPr} (also see the proof of Theorem \ref{t2.7}
and Theorem \ref{t2.8}). There is a normal subgroup $H$ of $G$
such that $k(v_1,v_2,v_3)^H =k(z_1,z_2,z_3)$ with each one of
$z_1,z_2,z_3$ in the form
$v_1^{\lambda_1}v_2^{\lambda_2}v_3^{\lambda_3}$ (where
$\lambda_1,\lambda_2,\lambda_3 \in \bm{Z}$), $G/H$ acts on
$k(z_1,z_2,z_3)$ by monomial $k$-automorphisms, and
$\rho_{\underline{z}}: G/H \to GL_3(\bm{Z})$ is injective.

Note that $\sigma \in Kernel \{ \rho_{\underline{x}}: G \to
GL_3(\bm{Z}) \}$. Thus $\sigma \in H$. Since $G/H$ is a quotient
of $G/\langle \sigma \rangle \simeq C_2 \times C_2$, $G/H$ is not
isomorphic to the cyclic group of order $4$. Now apply Theorem
\ref{t2.8} to $k(z_1,z_2,z_3)^{G/H}$. We find that
$k(z_1,z_2,z_3)^{G/H}$ is rational.

\bigskip
Subcase 2. $G=\langle \sigma \rangle \rtimes \langle \tau \rangle$
where $\langle \tau \rangle \simeq C_4$.

The proof is similar to that of Subcase 1. Choose the same vector
$X$ with $\sigma \cdot X=\zeta X$. Define $u_j=\tau^j\cdot X$ for
$0 \le j \le 3$. The question is reduced to
$k(u_0,u_1,u_2,u_3)^G$, or to $k(v_1,v_2,v_3)^G$ more simply where
$v_i=u_i/u_{j-1}$ for $1\le i\le 3$.

Again we have $\sigma (v_j) =\zeta^{a_j}v_j$ for some integer
$a_j$. Moreover, since $\tau$ permutes $u_0,u_1,u_2,u_3$
cyclically, we find that $\tau : v_1 \rightarrow v_2 \rightarrow
v_3 \rightarrow 1/(v_1v_2v_3)$.

As in Subcase 1, apply \cite[Lemma 2.8]{KPr} and its proof. We
find that $k(v_1,v_2,v_3)^{<\sigma>}$ $=k(z_1,z_2,z_3)$ where each
one of $z_1,z_2,z_3$ is in the form
$v_1^{\lambda_1}v_2^{\lambda_2}v_3^{\lambda_3}$ (with
$\lambda_1,\lambda_2,\lambda_3 \in \bm{Z}$). Note that $\tau$ acts
on $k(z_1,z_2,z_3)$ by purely monomial $k$-automorphisms, because
we have found $\tau : v_1 \mapsto v_2 \mapsto v_3 \mapsto
1/(v_1v_2v_3)$. Thus $k(z_1,z_2,z_3)^{<\tau>}$ is rational by
\cite{HK2}. Done.

\bigskip
Subcase 3. $P_2$ is generated by $\tau$ with $\langle \tau \rangle
\simeq C_8$ and $\tau^4=\sigma^m$.

We will show that $k(G)$ is rational over $k$ if and only if at
least one of $-1, 2, -2$ belongs to $(k^{\times})^2$.

First assume that $k(G)$ is rational. The proof is almost the same
as the proof of the ``exceptional case" in Theorem \ref{t2.8}. The
desired conclusion ``at least one of $-1, 2, -2$ belongs to
$(k^{\times})^2$", which is equivalent to ``$k(\zeta_8)$ is cyclic
over $k$", is valid when $k$ is a finite field. Hence we may
assume that $k$ is an infinite field.

Since $k(G)$ is $k$-rational, it is retract $k$-rational
\cite[Sa2, Ka7]{Sa}. By Saltman's Theorem \cite[Theorem 3.1; Ka7,
Theorem 3.5]{Sa}, we find that $k(C_8)$ is also retract
$k$-rational because $G \simeq C_m \rtimes C_8$. Since $k(C_8)$ is
retract $k$-rational, it follows that $k(\zeta_8)$ is cyclic over
$k$ by \cite[Theorem 4.12; Ka7, Theorem 2.9]{Sa2}. Done.

\bigskip
Now assume that at least one of $-1, 2, -2$ belongs to
$(k^{\times})^2$. We will prove that $k(G)$ is rational.

Similar to Subcase 2, choose the same vector $X$ with $\sigma
\cdot X=\zeta X$. Define $u_j=\tau^j\cdot X$ for $0 \le j \le 3$.
The question is reduced to $k(u_0,u_1,u_2,u_3)^G$, or to
$k(v_1,v_2,v_3)^G$ more simply where $v_i=u_i/u_{j-1}$ for $1\le
i\le 3$.

Again we have $\sigma (v_j) =\zeta^{a_j}v_j$ for some integer
$a_j$. Moreover, since $\tau^4 =\sigma^m$, we find that $\tau :
v_1 \mapsto v_2 \mapsto v_3 \mapsto -1/(v_1v_2v_3)$.

Apply \cite[Lemma 2.8]{KPr} and its proof. We find that
$k(v_1,v_2,v_3)^{<\sigma>}=k(z_1,z_2,z_3)$ where each one of
$z_1,z_2,z_3$ is in the form
$v_1^{\lambda_1}v_2^{\lambda_2}v_3^{\lambda_3}$ (with
$\lambda_1,\lambda_2,\lambda_3 \in \bm{Z}$). Note that $\tau$ acts
on $k(z_1,z_2,z_3)$ by monomial $k$-automorphisms with
coefficients in $\langle -1 \rangle$, because we have found $\tau
: v_1 \mapsto v_2 \mapsto v_3 \mapsto -1/(v_1v_2v_3)$.

Apply Theorem \ref{t2.8} to $k(z_1,z_2,z_3)^{<\tau>}$. We find
that $k(z_1,z_2,z_3)^{<\tau>}$ is rational except for the
situation $\tau : z_1 \mapsto z_2 \mapsto z_3 \mapsto
-1/(z_1z_2z_3)$. But this ``exceptional" case has been solved in
Theorem \ref{t2.8}. Hence the result.
\end{proof}

\bigskip
It is not difficult to adapt the proof of Theorem \ref{t4.1} so as
to find a short proof of Theorem \ref{t1.3} for 2-groups (but not
for the $p$-groups of odd order in \cite{Ka4}).

The idea of the proof in Theorem \ref{t4.1} can be applied to the $M$-groups defined below.

\begin{defn} \label{d4.2}
Let $k$ be any field, $G$ be a finite subgroup of $GL_n(k)$. $G$
is called an $M$-group if every $\sigma\in G$ has exactly one
non-zero entry in each column. An $M$-group $G$ is related to the
monomial representation which is induced from a one-dimension
representation of some subgroup $H$ of $G$ with $[G:H]\le n$.
Since we use the terminology ``monomial actions", we avoid using
the terminology ``monomial representations" and we just call them
$M$-groups.

An $M$-group $G\subset GL_n(k)$ acts naturally on the rational function field $k(x_1,\ldots,x_n)$ as follows.
For any $\sigma\in G\subset GL_n(k)$, define $\sigma\cdot x_1,\ldots,\sigma\cdot x_n$ by
\[
(\sigma\cdot x_1,\ldots,\sigma\cdot x_n)=(x_1,\ldots,x_n)\sigma,
\]
i.e. if $\sigma=(a_{ij})_{1\le i,j\le n} \in GL_n (k)\subset M_n(k)$, $\sigma\cdot x_j=\sum_{1\le i\le n} a_{ij} x_i$.
\end{defn}

\begin{lemma} \label{l4.3}
Let $k$ be any field, $G$ be a finite $M$-group contained in
$GL_n(k)$. Let $G$ act on the rational function field
$k(x_1,\ldots,x_n)$. Then there exist a root of unity $\zeta_m\in
k$, $b_1,\ldots,b_n \in k\backslash \{0\}$ and such that if we
define $y_i=b_ix_i$ for $1\le i\le n$, then, for $\sigma\in G$,
$1\le i\le n$, $\sigma(y_i)=c y_j$ for some $c\in \langle
\zeta_m\rangle$ and some $1\le j\le n$.
\end{lemma}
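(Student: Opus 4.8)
\textbf{Proof proposal for Lemma \ref{l4.3}.}

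The plan is to reduce the statement about the $M$-group $G\subset GL_n(k)$ to an abelian question after passing to a finite index. Since $G$ is an $M$-group, each $\sigma\in G$ factors as a permutation matrix times a diagonal matrix, so there is a group homomorphism $\pi:G\to S_n$ recording which variable $x_j$ is sent (up to scalar) to which $x_i$. Let $N=\ker\pi$. Then $N$ consists of diagonal matrices, i.e. $N$ acts on $k(x_1,\dots,x_n)$ by $\sigma(x_i)=d_i(\sigma)x_i$ with $d_i(\sigma)\in k^{\times}$; since $N$ is a finite group, each $d_i:N\to k^{\times}$ has image a finite cyclic group of roots of unity contained in $k$. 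So there is a single $m$ (the exponent of $N$, or rather of $\prod_i\mathrm{im}(d_i)$) with all $d_i(\sigma)\in\langle\zeta_m\rangle$ for $\sigma\in N$; enlarge $m$ at the end if necessary to absorb the extra coefficients produced below.

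Next I would handle the part of $G$ that permutes the variables nontrivially. For each coset of $N$ in $G$, pick a representative; equivalently, for each element $\bar\sigma$ in the permutation group $\pi(G)\subset S_n$, lift it to some $g_{\bar\sigma}\in G$. Each such lift acts by $g_{\bar\sigma}(x_j)=\lambda_j(g_{\bar\sigma})\,x_{\pi(g_{\bar\sigma})(j)}$ with finitely many scalars $\lambda_j$; these scalars need not be roots of unity a priori, but there are only finitely many of them (finitely many $g$, finitely many $j$). The idea is to rescale the variables to kill these scalars along the orbits of $\pi(G)$ acting on $\{1,\dots,n\}$. Concretely, fix an orbit $O=\{i_1,\dots,i_r\}$; choose one index, say $i_1$, as a base point, set $b_{i_1}=1$, and for each other $i_t\in O$ define $b_{i_t}$ as a product of the scalars $\lambda_\bullet$ encountered along a chosen path in $\pi(G)$ from $i_1$ to $i_t$, so that after replacing $x_i$ by $y_i=b_ix_i$ the transition between consecutive variables on the path becomes coefficient-free. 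This is exactly the normalization used implicitly in Section 3 (``Define $y_0=x_0$, $y_1=x_1/x_0$, \dots'' type substitutions) and in the proof of Theorem \ref{t4.1}.

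After this rescaling, for a general $\sigma\in G$ and a general $i$, the coefficient $c$ in $\sigma(y_i)=c\,y_j$ is a product of (i) the diagonal characters $d_\bullet$ coming from $N$, which lie in $\langle\zeta_m\rangle$, and (ii) a ``holonomy'' scalar: the scalar you pick up by going around a loop in $\pi(G)$ based at $i$ and back to $i$. The main obstacle is precisely to see that this holonomy is again a root of unity lying in a fixed finite cyclic group. The point is that $\pi(G)$ is a finite group, so every loop corresponds to an element of the finite stabilizer/fundamental group data, hence there are only finitely many possible holonomy scalars; moreover each such scalar, raised to the order of the corresponding group element, equals $1$ (it is the relevant diagonal entry of some power of $\sigma$, which lands in $N$ and hence in $\langle\zeta_m\rangle$). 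Therefore all coefficients $c$ that appear — over all $\sigma\in G$, all $i$ — form a finite set of roots of unity in $k$; letting $\zeta_m$ be a primitive root of unity whose powers contain this finite set (such $m$ exists since a finite set of roots of unity in $k$ generates a finite cyclic subgroup of $k^{\times}$) gives the claim. A clean way to organize this last step without tracking loops explicitly is: the map $\sigma\mapsto$ ``coefficient of $y_i$ in $\sigma(y_i)$'' behaves multiplicatively enough that the coefficients generate a finite subgroup of $k^{\times}$; since $G$ is finite, this subgroup is finite, hence cyclic, hence of the form $\langle\zeta_m\rangle$ for a root of unity $\zeta_m\in k$. This finishes the proof.
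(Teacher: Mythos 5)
Your proposal is correct and follows essentially the same route as the paper: rescale each $x_i$ by transporting a base variable along chosen group elements (your ``paths'' are the paper's $\tau_i$ with $\tau_i\cdot x_1=b_i x_i$), and then observe that every remaining coefficient is the scalar by which a line-stabilizing element of finite order acts, hence lies in a finite (so cyclic) subgroup of $k^{\times}$. The paper packages your two-step argument (each coefficient is a root of unity, then the finitely many coefficients generate a finite cyclic group) more directly by noting that the set $I=\{(\tau\cdot x_1)/x_1:\tau\in H\}$ is already the image of a homomorphism from the stabilizer $H$, hence a finite cyclic subgroup $\langle\zeta_m\rangle$ containing all coefficients at once, but this is only a difference of presentation.
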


\begin{proof}
Note that, for any $\sigma \in G$, $\sigma\cdot x_i=b\cdot x_j$
for some $b\in k\backslash \{0\}$, some $1\le j\le n$, i.e. if we
neglect the coefficient $b$ temporarily, $\sigma$ acts on
$\{x_1,x_2,\ldots,x_n\}$ by some permutation. To prove this lemma,
without loss of generality, we may assume that $G$ acts on
$\{x_1,\ldots,x_n\}$ transitively.

For $2\le i\le n$, choose $\tau_i\in G$ such that $\tau_i \cdot x_1=b_i\cdot x_i$ where $b_i \in k\backslash \{0\}$.
Define $\tau_1=1\in G$ and $b_1=1\in k$.
Thus $\tau_i\cdot x_1=b_i\cdot x_i$ for $1\le i\le n$.

Define $y_i=b_ix_i$ for $1\le i\le n$.

Define $H=\{\tau \in G: \tau\cdot x_1=c\cdot x_1$ for some $c\in k\backslash \{0\}\}$, $I=\{b\in k\backslash \{0\}: b=(\tau\cdot x_1)/x_1$ for some $\tau\in H\}$.
$I$ is a subgroup of the multiplicative group $k\backslash \{0\}$.
Hence $I$ is a cyclic group and $I=\langle \zeta_m\rangle$ for some $\zeta_m\in k$.

We will prove that, for any $\sigma \in G$, any $1\le i\le n$, if $\sigma\cdot y_i=c\cdot y_j$, then $c\in \langle \zeta_m\rangle$.

If $\sigma\in G$ and $(\sigma\cdot x_i)/x_j \in k$, then $(\sigma\cdot y_i)/y_j\in k$.
Write $\sigma\cdot y_i=c\cdot y_j$.
Plugging the formulae $\tau_i\cdot x_1=b_i\cdot x_i=y_i$, $\tau_j x_1=y_j$ into $\sigma \cdot y_i=c\cdot y_j$,
we get $\tau_j^{-1} \sigma \tau_i(x_1)=cx_1$, i.e. $\tau_j^{-1} \sigma \tau_i\in H$ and $c\in I=\langle \zeta_m\rangle$.
\end{proof}

\begin{theorem} \label{t4.4}
Let $k$ be a field with $\fn{char}k\ne 2$ and $\sqrt{-1}\in k$.
Let $G$ be an $M$-group contained in $GL_4(k)$.
Consider the natural action of $G$ on the rational function field $k(x_1,x_2,x_3,x_4)$.
Then the fixed field $k(x_1,x_2,x_3,x_4)^G$ is rational over $k$.
\end{theorem}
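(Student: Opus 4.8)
The plan is to collapse the $4$-dimensional problem onto a $3$-dimensional monomial one (handled by Theorem~\ref{t2.7}) followed by a single one-variable extension (handled by Theorem~\ref{t2.3}). The first move is to apply Lemma~\ref{l4.3}: after replacing each $x_i$ by a suitable scalar multiple $b_ix_i$, we may assume that $G$ acts on $k(x_1,x_2,x_3,x_4)$ by monomial $k$-automorphisms all of whose coefficients lie in $\langle\zeta_m\rangle$ for some root of unity $\zeta_m\in k$; that is, for every $\sigma\in G$ and every $i$ we have $\sigma(x_i)=c_i(\sigma)\,x_{\pi_\sigma(i)}$ with $c_i(\sigma)\in\langle\zeta_m\rangle$ and $\pi_\sigma\in S_4$. (Replacing $G$ by its image in the automorphism group does not change the fixed field, so we may assume the action is faithful if convenient; in fact faithfulness is not needed below.)

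Next I would introduce $z_i=x_i/x_4$ for $1\le i\le 3$, together with the bookkeeping convention $z_4=1$, so that $k(x_1,x_2,x_3,x_4)=k(z_1,z_2,z_3)(x_4)$. A short computation gives, for $1\le i\le 3$,
\[
\sigma(z_i)=\frac{\sigma(x_i)}{\sigma(x_4)}=\frac{c_i(\sigma)}{c_4(\sigma)}\cdot\frac{x_{\pi_\sigma(i)}}{x_{\pi_\sigma(4)}}=\frac{c_i(\sigma)}{c_4(\sigma)}\cdot\frac{z_{\pi_\sigma(i)}}{z_{\pi_\sigma(4)}},
\]
which is a Laurent monomial in $z_1,z_2,z_3$ with coefficient in $\langle\zeta_m\rangle$ (the convention $z_4=1$ makes this uniform no matter which point $\pi_\sigma$ sends $4$ to). Hence $G$ stabilizes $L:=k(z_1,z_2,z_3)$ and acts on it by monomial $k$-automorphisms with coefficients in $\langle\zeta_m\rangle$. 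Since $\sqrt{-1}\in k$, Theorem~\ref{t2.7} applies and $L^G$ is rational over $k$.

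It then remains to climb back up along $x_4$. From $\sigma(x_4)=c_4(\sigma)\,x_{\pi_\sigma(4)}=c_4(\sigma)\,z_{\pi_\sigma(4)}\cdot x_4$ we see that every $\sigma\in G$ acts on $L(x_4)$ by $x_4\mapsto a_\sigma x_4$ with $a_\sigma\in L^{\times}$, and $\sigma(L)\subseteq L$. Thus Theorem~\ref{t2.3} gives $L(x_4)^G=L^G(f)$ for some $f\in L[x_4]^G$ of positive degree in $x_4$. Such an $f$ is transcendental over $L^G$ — this can also be read off from transcendence degrees, since $\operatorname{trdeg}_k L(x_4)^G=4$ while $\operatorname{trdeg}_k L^G=3$ — so $L(x_4)^G=L^G(f)$ is a rational function field in one variable over $L^G$. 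As $L^G$ is $k$-rational by the previous step, $k(x_1,x_2,x_3,x_4)^G=L(x_4)^G$ is $k$-rational, which is the claim.

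The argument is genuinely short: the only point requiring any care is the verification that the substitution $z_i=x_i/x_4$ keeps the action monomial with roots-of-unity coefficients in every case, and this is exactly what the convention $z_4=1$ streamlines. The real content is imported wholesale — it is Theorem~\ref{t2.7}, the rationality of all $3$-dimensional monomial actions over a field containing $\sqrt{-1}$, which in turn rests on the classification of Yamasaki and of Hoshi--Kitayama--Yamasaki \cite{Ya,HKY}. (One could equally invoke Theorem~\ref{t2.8} in the middle step, since the exceptional case there cannot occur when $\sqrt{-1}\in k$.)
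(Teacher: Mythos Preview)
Your proof is correct and follows essentially the same route as the paper's: apply Lemma~\ref{l4.3} to normalize the coefficients into $\langle\zeta_m\rangle$, pass to the quotients $z_i=x_i/x_4$ to obtain a $3$-dimensional monomial action handled by Theorem~\ref{t2.7}, and use Theorem~\ref{t2.3} for the remaining one-variable extension. Your write-up is in fact a bit more explicit than the paper's in verifying that the induced action on $k(z_1,z_2,z_3)$ is monomial with coefficients in $\langle\zeta_m\rangle$ and that $f$ is transcendental over $L^G$.
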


\begin{proof}
By Lemma \ref{l4.3}, find $y_1$, $y_2$, $y_3$, $y_4$ and $\zeta_m \in k$ such that $k(x_1,x_2,x_3,x_4)=k(y_1,y_2,y_3,y_4)$ and,
for any $\sigma \in G$, any $1\le i\le 4$, $\sigma (y_i)=c\cdot y_j$ for some $c\in \langle \zeta_m\rangle$, some $1\le j\le 4$.

Define $z_i=y_i/y_4$ for $1\le i\le 3$. Then
$k(x_1,x_2,x_3,x_4)^G=k(y_1,y_2,y_3,y_4)^G=k(z_1,z_2,z_3)^G(f)$ by
Theorem \ref{t2.3}. Note that $G$ acts on $k(z_1,z_2,z_3)$ by
monomial $k$-automorphisms with coefficients in $\langle \zeta_m
\rangle$. Apply Theorem \ref{t2.7}. We find that
$k(z_1,z_2,z_3)^G$ is rational over $k$.
\end{proof}

Remark. If $\sqrt{-1}$ doesn't belong to the base field $k$, it
may happen that the conclusion of Theorem \ref{t4.4} may fail. For
example, consider an action $ \sigma : x_1 \mapsto x_2 \mapsto x_3
\mapsto x_4 \mapsto \ -x_1$; the fixed field $\bm{Q} (x_1, x_2,
x_3, x_4)^{<\sigma>}$ is not rational over $\bm{Q}$ by
\cite[Theorem 1.8]{Ka7}.

\newpage
\renewcommand{\refname}{\centering{References}}

\end{document}